\documentclass[letter,11pt]{amsart}
\usepackage{amsmath,amsthm,amssymb,amsfonts,enumerate,color,hyperref,esint, graphicx, pgf,tikz}
\usepackage{hyperref}

\usepackage{amssymb,amsfonts}
\usepackage[all,arc]{xy}
\usepackage{enumerate}
\usepackage{comment}
\usepackage{mathrsfs}
\usepackage[makeroom]{cancel}
\usepackage{orcidlink}
\usepackage{enumitem}
\usepackage{tikz-cd}

\usepackage{mathtools}
\usepackage{esint}

\newtheorem{thm}{Theorem}[section]
\newtheorem{cor}[thm]{Corollary}
\newtheorem{prop}[thm]{Proposition}
\newtheorem{lem}[thm]{Lemma}

\theoremstyle{definition}
\newtheorem{defn}[thm]{Definition}

\newtheorem{notns}[thm]{Notations}

\theoremstyle{remark}
\newtheorem{rem}[thm]{Remark}

\newcommand{\diag}{\text{\rm diag}}

\newcommand{\eps}{{\varepsilon}}

\newcommand{\RR}{{\mathbb R}}

\def\phi{\varphi}
\def\eps{\epsilon }

\def\D{\partial }
\newcommand\adots{\mathinner{\mkern2mu\raise1pt\hbox{.}
\mkern3mu\raise4pt\hbox{.}\mkern1mu\raise7pt\hbox{.}}}

\renewenvironment{align}{
    \begin{equation}
    \begin{aligned}
	}
	{
    \end{aligned}
    \end{equation}
    \ignorespacesafterend
}
\numberwithin{equation}{section}
\mathtoolsset{showonlyrefs=true}
\title{Free Boundary Regularity for Non-Convex Fully Nonlinear Alt-Phillips Problems}

\author[A.~Zahl]{Alvis Zahl \orcidlink{0000-0001-6390-5913}}
    \address{Alvis Zahl. Department of Mathematics\\
    Rutgers University\\
110 Frelinghuysen Rd., Piscataway, NJ 08854, USA}
    \email{azahl@asu.edu}

\keywords{Free Boundary Problems, Fully Nonlinear Equations, Hodograph Transform}

\subjclass[2020]{Primary: 35R35, 35A22. Secondary: 35J60, 35J70.}
\thanks{Alvis Zahl, Department of Mathematics, Rutgers University, 110 Frelinghuysen Rd.,
Piscataway, NJ 08854, USA. \emph{Email:} azahl@asu.edu ORCiD:0000-0001-6390-5913}

\calclayout

\begin{document}

\begin{abstract}
This paper studies the fully nonlinear Alt-Phillips free boundary problem
\begin{equation} \nonumber
F(D^2u) = u^\gamma \chi_{\{u>0\}},
\end{equation}
for $\gamma \in (-1/3,1)$ without any convexity assumption on $F$. We establish that flat free boundaries are smooth. This result is new even for the fully nonlinear obstacle problem when $\gamma = 0$.  

Our approach is based on a partial hodograph transform, which converts the free boundary into a fixed boundary and produces a fully nonlinear degenerate equation. For this equation we prove a Harnack inequality, yielding an improvement of flatness, and a Schauder estimate for the associated linearized equation. Both estimates appear to be new and are of independent interest.
\end{abstract}

\maketitle

\section{Introduction}
Let $\Omega$ be a domain in $\RR^n$. We study the fully nonlinear Alt-Phillips problem:
\begin{align} \label{problem}
\begin{cases}
F(D^2 u) = u^\gamma \chi_{\{u>0\}}, \\
u \geq 0,
\end{cases}
 & \text{in } \Omega.
\end{align}

Here $\gamma \in (-1,1)$, $F$ is a real valued function defined on the space of real $n\times n$ symmetric matrices, and $F$ is uniformly elliptic.

The Alt-Phillips problem, originally introduced as the Euler-Lagrange equation of a variational functional by Alt and Phillips \cite{Alt1986}, interpolates between the two most studied free boundary problems: the Bernoulli problem ($\gamma = -1$) and the obstacle problem  ($\gamma = 0$). For $\gamma \geq 1$, \eqref{problem} yields a strong maximum principle and has no free boundary, while for $\gamma \leq -1$, the problem has very different monotonicity properties and free boundary behavior, which is not studied here.  This problem also arises in the study of gas distribution in reaction with a porous catalyst pellet where the solution $u$ models the density of the gas \cite{aris1975mathematical}. 

In recent years, attention has turned to the fully nonlinear analogues. The fully nonlinear obstacle problem ($\gamma = 0$) has been studied extensively in \cite{MR3648978, Figalli2014, MR3957397, MR3542613, Lee, LeePark}. For general $\gamma \in (-1,0) \cup (0,1)$, Wu and Yu \cite{WuYu} established the optimal regularity of the solution $u$, and the $C^1$ regularity of the regular part of the free boundary. 

All these works above require some convexity assumptions on the operator $F$, either directly or as in \cite{LeePark} through structural conditions on $F$ as in \cite{CaffarelliYuan} that play an equivalent role. Convexity is often used in a structural way to enable blow up argument in spirit of \cite{Caffarelli1970}. 

There is, however, no evidence that convexity is the natural assumption in studying the regularity of the free boundary. This paper makes no convexity assumption and establishes that flat free boundaries are smooth, showing that the free boundary regularity is intrinsic to uniform ellipticity rather than an artifact of convexity. The result is new even in the case of the fully nonlinear obstacle problem when $\gamma = 0$.

We assume $F$ satisfies the following.

\begin{enumerate}
\item[(H1)] \label{H1} $F \in C^\infty(\mathcal{S})$, where $\mathcal{S}$ is the space of real $n \times n$ symmetric matrices, $F(0) = 0$.
\item[(H2)] $F$ is uniformly elliptic: there exist positive constants $\lambda$, $\Lambda$ such that for any $M \in \mathcal{S}$ and $\xi \in \RR^n$, 
\begin{equation} \nonumber
\lambda|\xi|^2 \leq \xi^T (\D F(M)/\D r_{ij}) \xi \leq \Lambda |\xi|^2
\end{equation}
where $r_{ij}$ denotes the $ij$-th component of matrix $M$.
\item[(H3)] \label{H3} When $\gamma<0$, for any symmetric matrix $M$ with $\|M\| \leq 1$, 
\begin{equation}\nonumber
\lim_{t\rightarrow \infty}\frac{1}{t}F(tM) = F_\infty(M),
\end{equation}
and there exists a constant $C$ and $t_0$ such that for all $t > t_0$, 
\begin{equation}
|tD^2F(tM)| \leq C.
\end{equation}

%\item[(H4)] $F(0) = 0$.
\end{enumerate}

We say that $x_0\in \D \{u > 0\}$ is $\eps$-flat if after possible translation and rotation, there exists fixed  $\eps \in (0,\eps_0)$ such that for $x \in B_1(x_0)$,
\begin{equation}\label{flat}
h(x\cdot e_n-\eps) \leq u(x) \leq h(x\cdot e_n+\eps). 
\end{equation}
where $e_n$ is the unit vector in the normal direction. 

The main result of the paper is the following theorem.

\begin{thm}\label{ThmA}
Let $u$ be a viscosity solution to \eqref{problem}. For $\gamma \in (-1/3,1)$, there exists $\eps > 0$, such that if $\Gamma  = B_{1}(x_0)\cap \D \{u > 0\}$ is $\eps$-flat, then $\Gamma \cap B_{1/2}(x_0)$ is the graph of a $C^\infty$ function $g$, and for every $k \in \mathbb{N}$,
\begin{equation}
\|g\|_{C^k(B'_{1/2}(x'_0))} \le C_k
\end{equation}
with $C_k$ depending only on $n, \gamma, \lambda, \Lambda$ and $k$.
\end{thm}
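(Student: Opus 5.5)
The plan is to follow the strategy announced in the abstract: use a partial hodograph transform to turn the free boundary into a fixed boundary, prove a Harnack inequality and improvement of flatness for the resulting degenerate fully nonlinear equation, and then bootstrap with a Schauder estimate for its linearization.

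\textbf{Step 1: one-dimensional profile and monotonicity.} Here $h$ denotes the one-dimensional solution $h(t)=c_\gamma (t^+)^{\beta}$ with $\beta = 2/(1-\gamma)$. By (H1)--(H2), $F(a\,e_n\otimes e_n)$ is a strictly increasing function of $a$, so $c_\gamma$ is fixed by the scaling of this one-dimensional problem. From the flatness \eqref{flat} and a comparison/barrier argument, I would show that $u$ is close to $h(x_n)$ in a rescaled $C^{1,\alpha}$ sense on $\{u>0\}\cap B_{3/4}$. Interior estimates for $F(D^2u)=u^\gamma$ apply away from the free boundary after rescaling by the distance $d$. In particular, I expect $\partial_n u \ge c\,d^{\beta-1}>0$ in $\{u>0\}\cap B_{3/4}$. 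This monotonicity makes the hodograph change of variables legitimate.

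\textbf{Step 2: partial hodograph transform.} Set $y'=x'$ and $y_n = (u/c_\gamma)^{1/\beta}$, so that $y_n\approx x_n^+$. Define $w(y)=x_n$, the inverse in the last variable. Then the free boundary $\{y_n=0\}$ becomes fixed, and $w(y)-y_n$ is small. Rewriting $F(D^2u)=u^\gamma$ in these coordinates, multiplying through, and using the homogeneity $u^\gamma = c\,y_n^{\beta-2}$ gives a fully nonlinear equation $G(D^2w, Dw, w/y_n \text{-type terms}, y_n)=0$ on $\{y_n>0\}$. Its second-order part degenerates like $y_n\,\partial_{nn}$ near $y_n=0$, with Neumann/Fuchsian-type boundary behaviour. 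For $\gamma<0$ the coefficients involve $F$ evaluated at large matrices ($D^2u\sim u^\gamma\to\infty$), and (H3) is used to control them and their derivatives uniformly. The restriction $\gamma>-1/3$ should appear here as the condition that the exponents in the degenerate operator are admissible, e.g.\ that the induced weight $y_n^{a}$ has $a>-1$, or that the error terms are integrable.

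\textbf{Step 3: Harnack inequality and improvement of flatness.} This is the main obstacle, since we have no convexity and the operator is degenerate at the boundary. For $w-y_n$ small, I would prove a Harnack inequality up to $\{y_n=0\}$ for viscosity solutions of the transformed equation. The approach is a Krylov--Safonov/Savin-type argument: build explicit sub- and supersolution barriers adapted to the $y_n^{\beta}$ degeneracy (polynomials in $y'$ plus multiples of $y_n^{p}$), then iterate a measure estimate in the degenerate geometry.

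A compactness argument then applies. Normalized oscillations $(w-y_n)/\eps$ converge to a solution of the linearized degenerate equation $\mathrm{tr}(A D'^2 v) + y_n v_{nn} + b\,v_n=0$ with constant coefficients. This limit is smooth up to $y_n=0$, with $C^{1,\alpha}$ estimates via separation of variables and Fourier analysis in $y'$. Hence $w$ is $\eps/2$-flat with respect to a rotated plane in $B_{1/2}$. Iterating this gives $C^{1,\alpha}$ regularity of $\partial\{u>0\}$ and $w\in C^{1,\alpha}$ up to $\{y_n=0\}$.

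\textbf{Step 4: higher regularity.} Difference quotients of $w$ in the tangential directions $y'$ solve the linearized equation, which has variable $C^\alpha$ coefficients and the same degenerate structure. A Schauder estimate for this linear degenerate equation, proved by perturbing off the constant-coefficient model with Campanato-type iteration, yields $D_{y'}w\in C^{1,\alpha}$. Bootstrapping, using the equation to recover normal derivatives, gives $w\in C^{k,\alpha}$ for all $k$. The free boundary is the graph $x_n=g(x')=w(x',0)$, so $g\in C^\infty$, with constants depending only on $n,\gamma,\lambda,\Lambda,k$.
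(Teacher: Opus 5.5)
Your outline (hodograph transform, then Harnack and improvement of flatness for the degenerate equation, then a Schauder bootstrap on tangential derivatives) is the paper's. However, two load-bearing steps are wrong as stated.

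\textbf{First gap: monotonicity in Step~1.} The monotonicity you claim does not follow from the hypothesis. The $\varepsilon$-flatness \eqref{flat} controls $u$ after rescaling only at points whose distance $d$ to $\Gamma$ is $\gtrsim\varepsilon$. Since $F$ is not convex, there are no general interior $C^{2,\alpha}$ estimates to fall back on. Smoothness away from $\Gamma$ comes from Savin's small perturbation theorem, which needs exactly that closeness to $h$ (Theorem~\ref{Savin}, valid only for $d>\delta=C_0\varepsilon$).

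In the layer $\{d\lesssim\varepsilon\}$ nothing prevents $u$ from being non-monotone in $x_n$. So $\partial_n u\ge c\,d^{\beta-1}$ on all of $\{u>0\}\cap B_{3/4}$ amounts to flatness at every scale, which is the conclusion. It cannot be used to make the hodograph single-valued. The paper instead works as follows:
\begin{itemize}
\item it lets $v$, defined by $u(y',v)=h(y_n)$, be multivalued in $\{y_n\le\delta\}$;
\item it introduces a viscosity notion for such functions (Definition~\ref{multivisdefn2});
\item it proves the Harnack inequality in that class, with H\"older control only at scales $\ge\varepsilon/\varepsilon_0$ (Corollary~\ref{harnackcor});
\item it obtains single-valuedness only as an output of the $C^{1,\sigma}$ iteration.
\end{itemize}

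Relatedly, $c_\gamma t^{2/(1-\gamma)}$ is not a one-dimensional solution unless $a\mapsto F(a\,e_n\otimes e_n)$ is linear. The problem has no scaling invariance, so you must use the ODE solution of \eqref{1dpde}. Its needed asymptotics $th''/h'\to(1+\gamma)/(1-\gamma)$ come from (H1) for $\gamma\ge0$ and from (H3) for $\gamma<0$.

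\textbf{Second gap: the degenerate structure and the threshold.} The model operator, and with it the reason for $\gamma>-1/3$, is misidentified. Because $y_n\sim x_n$, the transform is a near-identity. Every second derivative of $v$ enters $D^2u$ with the factor $h'$, and the first-order terms enter with $h''\sim h'/y_n$ (see the formulas for $u_{nn},u_{in},u_{ij}$ in Section~\ref{Sec:Hodograph}). So the frozen linearization is $a_0^{ij}\partial_{ij}+\frac{\alpha}{y_n}\,b_0\cdot\nabla$ with $\alpha=\frac{2(1+\gamma)}{1-\gamma}$, i.e.\ $(y_n^\alpha u_n)_n+y_n^\alpha(\cdots)$, degenerate in all directions at once. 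Your model $\mathrm{tr}(AD'^2v)+y_nv_{nn}+bv_n$ would instead correspond to a normal variable scaling like $x_n^2$.

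The threshold is $\gamma>-1/3\iff\alpha>1$, and it enters in two places:
\begin{itemize}
\item \emph{Reflection.} The second profile $y_n^{1-\alpha}$ of $(y_n^\alpha c')'=0$ is unbounded, so bounded blow-up limits automatically satisfy the Neumann condition and can be reflected evenly (Theorem~\ref{evenextension}). This gives $\partial_n w_*(0)=0$, hence an improvement of flatness by a purely tangential tilt.
\item \emph{Harnack barrier.} Take $w_\eta=(y_n^{s}-\eta^{s})/(\delta^{s}-\eta^{s})$ with $1-\alpha\le s<0$. It satisfies $w_\eta''+\frac{\alpha}{y_n}w_\eta'\ge0$, tends to $-\infty$ as $y_n\to0$, and tends to $1$ as $\eta\to0$. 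So the barrier $y_n+\sigma+c_0\varepsilon(\psi(y')w_\eta-1)+t$ carries the interior Harnack bound at height $\delta$ down to $\{y_n=0\}$, with no boundary data and no degenerate measure estimate. The range for $s$ is nonempty exactly when $\alpha>1$.
\end{itemize}
Your candidate conditions (weight exponent $>-1$, integrability of errors) hold for every $\gamma\in(-1,1)$, since $\alpha>-1\iff\gamma>-3$, so they cannot be the mechanism. In fact the regime you need is the one where $y_n^\alpha$ lies outside the $A_2$ class.

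Your Step~4 matches the paper's bootstrap once these points are repaired. The paper first upgrades to $v\in C^{2,\beta}$ by a second compactness argument, so that the coefficients, which involve $D^2v$, are H\"older.
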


We note that if we assume, instead of (H1), that $F \in C^{k}$ for some $k \geq 2$, then the same proof shows that the flat free boundary is $C^{k+1,\beta}$ for some $\beta \in (0,1)$.

Inspired by Allen, Kriventsov, and Shahgholian \cite{Dennis2025}, our approach is to study the problem using a modified partial Hodograph transform developed by Kinderlehrer, Nirenberg in \cite{Kinderlehrer}, Kinderlehrer, Nirenberg, and Spruck in \cite{KinderlehrerSpruck} to analyze the behavior of the free boundary. The hodograph transformation converts the free boundary into a fixed boundary and maps the problem to a highly degenerate fully nonlinear elliptic equation. Similar techniques also appear in the work of De Silva, Forcillo, and Savin \cite{MR4308249} for the one-phase Stephan problem.

While higher regularity of the free boundary for the obstacle problem was originally obtained with the Legendre transform \cite{Petrosyan}, it is not suitable for the Alt-Phillips problem. Specifically, adopting the normal derivative $u_n$ as coordinates breaks the essential level set structure and the hodograph transform does not.

We want to point out that the proof relies on two major technical contributions, each of which yields a regularity result of independent interest for degenerate elliptic equations.

First, under the flatness assumption (\ref{flat}), $u$ is mapped to some $v$ which is possibly multivalued near $\{x_n = 0\}$. In the case $\gamma > -\frac{1}{3}$, we introduce an appropriate notion of viscosity solution to be compatible with multivalued functions and first establish a Harnack type inequality which yields an improvement of flatness. The argument requires nontrivial modifications of the argument in \cite{DeSilva2011} due to the nonlinearity and the degeneracy of the equation. By constructing a particular barrier, we prove the regularity all the way to the boundary $\{x_n = 0\}$ where the equation degenerates. Beyond its role in the present paper, this provides, to our knowledge, the first Harnack inequality for fully nonlinear elliptic operators with this type of boundary degeneracy.

Secondly, by differentiating the equation in the tangential directions, we get a degenerate linear equation, where the coefficients depend on the solution itself. We first establish a Liouville theorem to the frozen coefficient equation and then prove a Schauder estimate in the style of \cite{Simon} for this degenerate equation. By iterating this argument, it follows that the flat free boundary is $C^\infty$. Beyond the application to free boundary regularity, there has been recent interests and developments in the regularities for such degenerate elliptic equations by Dong and Kim \cite{DongKim, DongKimRevisit}, Dong and Phan \cite{DongPhan}, and Sire, Terracini and Vita \cite{Terracini1,Terracini2}, among many other works. The Schauder type estimate developed in this paper extends current results and applies in the case of non-divergence form degenerate equations with variable coefficients. 

We point out that the critical value of the exponent $\gamma = -1/3$ appears when the free boundary changes behavior and we briefly explain here. In the hodograph variable, the solution $v$ satisfies a linearized equation and in leading order in $x_n$ behaves like the solution to the ODE of the form
\begin{equation}\nonumber
(x_n^\alpha v_n)_n = 0 \text{ in } {x_n > 0}
\end{equation}
where the two solutions are $C_1x_n^{1-\alpha}$ and $C_2$. When $\gamma > -1/3$, $\alpha > 1$, and $x_n^{1-\alpha}$ is unbounded near $x_n = 0$, and thus this term has to vanish. Equivalently, the half space solutions satisfies a Neumann type boundary condition $x_n^\alpha u_n \rightarrow 0$ as $x_n \rightarrow 0$, and the solution can be extended via even reflection across $\{x_n = 0\}$ and solves the same equation where the coefficients are extended correspondingly. 

In the case $\gamma \leq -1/3$, the approach in the paper also applies, but would require different elliptic regularity theory for the corresponding oblique derivative problems and this is not pursued here.

We conclude the discussion of Theorem \ref{ThmA} with a few general remarks. First, it is a priori not obvious that the free boundaries are smooth in this general setting. The mechanism here is in spirit of Savin's small perturbation theorem \cite{savin} where the solution is close to a 1D solution. 

Theorem \ref{ThmA} suggests that for fully nonlinear free boundary problems, the bulk of the free boundaries are actually better behaved than the interior. This motivates the study of the free boundaries in this non-convex class of fully nonlinear equations. A natural next step is to study the singular sets of the free boundary. In particular, for $\gamma = 0$, the structure of the the singular set remains completely open. 

There are other interesting classes of operators beyond the ones studied here. For example, instead of assuming $F$ is smooth, one may assume that $F$ is $1$-homogeneous, for which the Pucci extremal operators are model cases. Such operators admit no linearization around the one-dimensional solution. Some free boundary regularity is available for the convex extremal case. Thus it is natural to explore the concave extremal case and general operators with such structured non-smooth behavior along one-dimensional solutions.

The rest of the paper is organized as follows. Section \ref{sec:prelimiary} establishes properties of the one-dimensional solution and preliminary results that follow directly from flatness. Section \ref{Sec:Hodograph} defines the Hodograph transform and derives the PDE in the transformed variables. Section \ref{Sec:Harnack} proves a Harnack type inequality. Section \ref{Sec:Liouville} focuses on the analysis of the constant coefficient linearized equation, where we establish a Liouville type theorem and a Schauder theorem. Section \ref{Sec:Improvement} shows the free boundary is $C^{1,\sigma}$ and Section \ref{Sec:Higher} improves this to $C^{2,\sigma}$, and iterates to $C^\infty$.

We finally introduce the following notations to be used throughout this paper.
\begin{notns}\hfill
\begin{enumerate} 
\item We denote partial derivatives by subscript. That is, 
\begin{equation}\nonumber
u_i = \D_i u = \frac{\D}{\D x_{i}} u.
\end{equation}
\item For a function $F(r,p): S \times \RR^n \rightarrow \RR$, where $S$ is the space of real symmetric matrices, we denote $r = (r^{ij}) \in S$, and $p = (p^i) \in \RR^n$. 
\item We denote \begin{equation}\nonumber
\Omega_+ = \{u > 0\},\quad \Gamma = \D \{u > 0\}.
\end{equation}
\item $\RR^{n,+} = \RR^n \cap \{x_n >0\}$, $B_r(x)^+ = B_r(x) \cap \{x_n >0\}$.
\item We use the convention of summing over repeated indexes.
\item For $x\in \RR^n$, denote $x = (x',x_n)$ where $x' \in \RR^{n-1}$. 
\item Let $f,g$ be two real valued functions. We write $f \sim g$ if there exists constants $c$, $C$, such that $cf(x) \leq g(x) \leq Cf(x)$ for all $x$.
\item Let $h(x_n): \RR \rightarrow \RR$. With abuse of notation, we use $D^2h$ as the matrix \begin{equation}
D^2h = \diag(0,\cdots,0,h'').
\end{equation}

\item Let $A = (a_{ij})$, $B = (b_{ij})$, $1\leq i,j\leq n$ be two matrix, we denote 
\begin{equation}\nonumber
A: B = \sum_{i,j=1}^n a_{ij} b_{ij}
\end{equation}
Let $p= (p_1,\cdots, p_n), q= (q_1,\cdots, q_n) \in \RR^n$ be vectors, we denote
\begin{equation}\nonumber
[A,p]:[B,q] = \sum_{i,j=1}^n a_{ij} b_{ij} + p_iq_i.
\end{equation}
\item We use $\sigma, \beta \in (0,1)$ as H\"older exponents which might vary at different places.
\end{enumerate} 
\end{notns}

\section{Preliminaries}\label{sec:prelimiary}

Let $h = h(x_n)$ be the unique positive one-dimensional solution to the ODE
\begin{align} \label{1dpde}
\begin{cases}
F(D^2h) = h^\gamma\\
h(0) = h'(0) = 0.
\end{cases}
\end{align}

We note that \eqref{1dpde} doesn't yield unique solutions since $h(t) = 0$ is a solution, and $h(t-t_0)$ is another whenever $h$ is a solution and $t_0 >0$. We prove that there is a unique solution that is positive for $t >0$. The following Proposition follows similarly from the semilinear case in \cite{Dennis2025}.

\begin{prop} \label{hprop} 
There exists a unique positive solution $h$ to \eqref{1dpde}, and $h$ satisfies the following properties. There exists constant $C, C_1,C_2 >0$ depending on $\gamma, \lambda, \Lambda$, such that
\begin{align}\nonumber
&C_1t^{2/1-\gamma} \leq h(t) \leq C_2t^{2/1-\gamma} &&\text{ for } 0<t<1,\\
& {C_1}\leq t\frac{h'(t)}{h(t)}\leq {C_2}&&\text{ for } 0<t<1,\\
& C_1 \leq t\frac{h''(t)}{h'(t)}\leq C_2 &&\text{ for } 0<t<1,\\
& |h'''| \leq C\frac{h}{t^3} &&\text{ for } 0<t<1,\\
&K^{C_1}t \leq h^{-1}(Kh(t))\leq K^{C_2}t &&\text{ for } K > 0.\\
\end{align}
\end{prop}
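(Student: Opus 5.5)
The plan is to reduce the ODE to a scalar first-order problem. Write $f(s) := F(\diag(0,\dots,0,s))$ for $s \in \RR$. By (H1)--(H2), $f$ is smooth, $f(0)=0$, and $\lambda \le f'(s) \le \Lambda$. Hence $f$ is a bi-Lipschitz diffeomorphism of $\RR$, and its inverse $g=f^{-1}$ satisfies $g(0)=0$ and $\Lambda^{-1}\le g' \le \lambda^{-1}$. The equation \eqref{1dpde} then becomes the semilinear ODE
\begin{equation}\nonumber
h'' = g(h^\gamma), \qquad h(0)=h'(0)=0,
\end{equation}
and $g(h^\gamma) \sim h^\gamma$ for $h>0$, with constants $\Lambda^{-1},\lambda^{-1}$.

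For existence and uniqueness, multiply by $h'$ and integrate. On any interval where $h>0$ and $h'>0$,
\begin{equation}\nonumber
\tfrac12 (h')^2 = G(h), \qquad G(s) := \int_0^s g(\tau^\gamma)\,d\tau \sim \frac{s^{1+\gamma}}{1+\gamma}.
\end{equation}
This is integrable at $0$ since $\gamma>-1$. So a positive solution with $h(0)=h'(0)=0$ must satisfy $h' = \sqrt{2G(h)}$, which gives
\begin{equation}\nonumber
t = \Phi(h(t)), \qquad \Phi(s) := \int_0^s \frac{d\sigma}{\sqrt{2G(\sigma)}} \sim s^{(1-\gamma)/2}.
\end{equation}
The integral converges because $(1+\gamma)/2<1$. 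Conversely, $h := \Phi^{-1}$ is a positive $C^2$ solution. Uniqueness among solutions positive on $(0,\infty)$ holds because any such solution has $h''>0$, hence $h'>0$, and therefore satisfies the same first integral. The well-known non-uniqueness, namely $h\equiv 0$ and translates $h(t-t_0)$, is excluded exactly by requiring positivity for $t>0$.

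The bound $h \sim t^{2/(1-\gamma)}$ follows immediately from $\Phi(s)\sim s^{(1-\gamma)/2}$. For the second estimate, use $h' = \sqrt{2G(h)} \sim h^{(1+\gamma)/2}$ together with $h\sim t^{2/(1-\gamma)}$ to get
\begin{equation}\nonumber
\frac{t h'}{h} \sim t\, h^{-(1-\gamma)/2} \sim 1 .
\end{equation}
For the third, use $h'' = g(h^\gamma)\sim h^\gamma$, so
\begin{equation}\nonumber
\frac{t h''}{h'} \sim \frac{t h^\gamma}{h^{(1+\gamma)/2}} = t\,h^{-(1-\gamma)/2} \sim 1 .
\end{equation}
For $h'''$, differentiate: $h''' = g'(h^\gamma)\,\gamma h^{\gamma-1}h'$. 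Then
\begin{equation}\nonumber
|h'''| \le C h^{\gamma-1}h' \le C \frac{h^\gamma}{t} \le C \frac{h}{t^3},
\end{equation}
since $h^{\gamma-1} \sim t^{-2}$. When $\gamma=0$ this term simply vanishes. Throughout, every constant depends only on $\gamma,\lambda,\Lambda$, because the only inputs are the two-sided bounds on $g'$.

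The last property is where I expect some care to be needed, since it is stated for all $K>0$. I will prove it for $t$ in the range where the power-law bounds hold, and extend to all of $(0,\infty)$ by the same integral argument, since $G(s)\sim s^{1+\gamma}$ holds globally. Set $\psi(\tau) := \log h(e^\tau)$. By the second property, $\psi'(\tau) = t h'(t)/h(t) \in [C_1,C_2]$, so $\psi$ is bi-Lipschitz with inverse slopes in $[C_2^{-1},C_1^{-1}]$. The equation $h(s)=Kh(t)$ reads $\psi(\log s) = \psi(\log t) + \log K$. This gives $|\log s - \log t| \in [C_2^{-1}|\log K|,\, C_1^{-1}|\log K|]$, with the sign of $\log K$, i.e.
\begin{equation}\nonumber
K^{1/C_2} t \le h^{-1}(Kh(t)) \le K^{1/C_1} t \quad (K\ge1),
\end{equation}
and the reversed inequalities for $K<1$. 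After renaming constants, this gives the stated form. The statement with exponents $C_1\le C_2$ in that order must be read with the ordering adjusted according to whether $K\ge1$ or $K<1$.
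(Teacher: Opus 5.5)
Your proof is correct, and for existence, uniqueness and the growth bounds it takes a different route from the paper.

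\textbf{Your route.} You invert $s\mapsto F(\diag(0,\dots,0,s))$ to obtain the autonomous ODE $h''=g(h^\gamma)$. You then integrate it exactly, via the energy identity $\tfrac12(h')^2=G(h)$ and the quadrature $t=\Phi(h(t))$. This one formula gives existence, uniqueness among solutions positive on $(0,\infty)$, and the two-sided bounds on $h$ and $h'$, with explicit constants valid on all of $(0,\infty)$.

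\textbf{The paper's route.} It uses only the inequality $\Lambda^{-1}h^\gamma\le h''\le\lambda^{-1}h^\gamma$. It compares $h$ with explicit power solutions of $\tilde h''=c\tilde h^\gamma$, gets existence from sub/supersolutions and uniqueness from a maximal-solution argument, and obtains $h'\sim t^{(1+\gamma)/(1-\gamma)}$ by integrating $h''\sim h^\gamma$ in $t$. Its bounds on $th''/h'$ and $h'''$ are then derived essentially as you do.

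\textbf{What each buys.} The comparison route would survive perturbations that destroy the exact first integral. Here, though, it is the weaker argument. The step $C_1\tilde h\le h\le C_2\tilde h$ needs a comparison principle for a right-hand side that is non-Lipschitz or singular at $h=0$, which is exactly where the non-uniqueness lives. Your first integral avoids that issue entirely.

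\textbf{Last property.} Your bi-Lipschitz argument for $\tau\mapsto\log h(e^\tau)$ is cleaner than the paper's. You are also right that the exponents must be swapped for $K<1$: with fixed $C_1\neq C_2$ the displayed inequality cannot hold on both sides of $K=1$. The paper's ``choose $C$ big if $K>1$, small if $K<1$'' is the same adjustment.

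\textbf{One small correction.} For $\gamma<0$ one has $h''=g(h^\gamma)\to\infty$ as $t\to0^+$. So $\Phi^{-1}$ is $C^2$ only on $(0,\infty)$ and $C^1$ up to $t=0$, and ``positive $C^2$ solution'' should be read in that sense. The paper's ``$h$ is smooth'' has the same imprecision.
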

\begin{proof}
Since $F$ is uniformly elliptic and $F(0) = 0$, 
\begin{equation}\nonumber
\lambda \|D^2h\| \leq F(D^2h) \leq \Lambda\|D^2h\|.
\end{equation}
Since $h \geq 0$, $\|D^2h\| = h''$. Using $F(D^2h) = h^\gamma$, we rewrite the above as
\begin{equation}\label{h''bound}
\frac{1}{\Lambda}h^\gamma \leq h''\leq \frac{1}{\lambda}h^\gamma.
\end{equation}
We note that a solution $\tilde{h}$ to 
\begin{align} \nonumber
\begin{cases}
&\tilde{h}'' = c\tilde{h}^\gamma\\
&\tilde{h}(0) = \tilde{h}'(0) = 0
\end{cases}
\end{align}
is given explicitly by
\begin{equation}\nonumber
\tilde{h}(t) = \left[c\frac{(1-\gamma)^2}{4\gamma}\right]^{1/1-\gamma}t^{\frac{2}{1-\gamma}}.
\end{equation}
Thus,
\begin{equation} \label{explicithbound}
C_1\tilde{h}(t) \leq h(t) \leq C_2\tilde{h}(t).
\end{equation}

By the method of sub and supersolutions, there exists a positive solution $h$ to \eqref{1dpde} on $0 \le t \le 1$ satisfying $C_1 t^{2/(1-\gamma)} \le h(t) \le C_2 t^{2/(1-\gamma)}$, this is the first inequality in the Proposition.

We now show there is a maximal such positive solution $h_{\max}$. Indeed, consider the solution to
\begin{align}\nonumber
\begin{cases}
y'' = h(y),\\
y(0) = \delta,\quad y'(0) = 0,
\end{cases}
\end{align}
which is bounded below by $h$. As $\delta \to 0$ we obtain $h_{\max}$, and any positive solution is bounded from below by $h_{\max}(t - \delta)$. Then $h = h_{\max}$ is the unique positive solution.

Let $v = h'$, then $v$ satisfies 
\begin{align} \nonumber
\begin{cases}
&F(\text{diag}\{0,\cdots,0,v'\}) = {h}^\gamma\\
&v(0) = 0.
\end{cases}
\end{align}
Using uniform ellipticity as before, we have
\begin{equation}\nonumber
\frac{h^\gamma}{\Lambda} \leq v' \leq \frac{h^\gamma}{\lambda}.
\end{equation}
Integrating the above inequality, and using \eqref{explicithbound}, we have
\begin{equation} \label{h'explicitbound}
C_1t^{\frac{1+\gamma}{1-\gamma}} \leq v \leq C_2t^{\frac{1+\gamma}{1-\gamma}},
\end{equation}
which gives
\begin{equation}\nonumber
{C_1}\leq t\frac{h'(t)}{h(t)}\leq {C_2}.
\end{equation}

For $th''/h'$, we have
\begin{equation}\nonumber
t\frac{h''}{h'} \leq Ct\frac{h^\gamma}{h'} \leq Ct\frac{t^{\frac{2\gamma}{1-\gamma}}}{t^{\frac{1+\gamma}{1-\gamma}}} \leq C_2.
\end{equation}
The lower bound follows similarly. 

We note that $h$ is smooth and we  can differentiate \eqref{1dpde} in $x_n$ direction and get
\begin{equation}\nonumber
\frac{\D F}{\D r^{nn}} (D^2h) h''' = \gamma h^{\gamma-1}h'.
\end{equation}
By uniform ellipticity of $F$,
\begin{equation}\nonumber
\lambda \leq \left|\frac{\D F}{\D r^{nn}} (D^2h)\right| \leq \Lambda 
\end{equation}
Therefore,
\begin{equation}\nonumber
|h'''| \leq C h^{\gamma-1}h' \leq C\frac{h}{t^3}.
\end{equation}
For any $K>0$, we have
\begin{equation}\nonumber
Kh(t) \leq K\tilde{h}(t) \leq CKt^{\frac{2}{1-\gamma}} \leq K^{\frac{2C}{1-\gamma}}t^{\frac{2}{1-\gamma}} \leq h(K^{C_2t}).
\end{equation}
by choosing $C$ big if $K >1$, and $C$ small if $K <1$. The other inequality follows similarly, and we yield
\begin{equation}\nonumber
h(K^{C_1}t)\leq Kh(t) \leq h(K^{C_2}t).
\end{equation}
Apply $h^{-1}$ to all expressions, we obtain the final inequality in the proposition. 
\end{proof}

\begin{prop} \label{hconstant} Let $h$ be the solution to \eqref{1dpde}, then for $\gamma \in (-1,1)$,
\begin{equation}\nonumber
\lim_{x_n \rightarrow 0}\frac{h'' x_n}{h'} = \frac{1+\gamma}{1-\gamma}.
\end{equation}
\end{prop}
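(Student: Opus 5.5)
The plan is to reduce the ODE in \eqref{1dpde} to a scalar relation $h''=c\,h^\gamma(1+o(1))$ as $t\to0^+$, with an explicit constant $c>0$. Two integrations then give the exact asymptotics of $h'$ and of $h$, and the limit is read off.

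\emph{Step 1: scalar reduction.} Set $f(s)=F(\diag(0,\dots,0,s))$. By (H1)--(H2), $f$ is smooth, $f(0)=0$, and $\lambda\le f'\le\Lambda$, so $f$ is an increasing bijection of $\RR$. Equation \eqref{1dpde} reads $h''=f^{-1}(h^\gamma)$. By Proposition \ref{hprop}, $h(t)\to0$ as $t\to0^+$. We split into three cases according to the sign of $\gamma$.
\begin{itemize}
\item If $\gamma>0$, then $h^\gamma\to0$. Since $f(s)=f'(0)s+O(s^2)$, we get $h''=h^\gamma/f'(0)\,(1+o(1))$, so $c=1/f'(0)$.
\item If $\gamma=0$, then $h''\equiv f^{-1}(1)=:c$ exactly.
\item If $\gamma<0$, then $h^\gamma\to\infty$, and we use (H3) with $M=\diag(0,\dots,0,1)$. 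It gives $f(s)/s\to F_\infty(M)=:a_\infty$ as $s\to\infty$, and $a_\infty\in[\lambda,\Lambda]$. Hence $f^{-1}(y)/y\to 1/a_\infty$ as $y\to\infty$, so $c=1/a_\infty$.
\end{itemize}
This is the one step I expect to need care: the case $\gamma<0$ is exactly where the hypothesis at infinity (H3) is needed, and one must check that the convergence $f(s)/s\to a_\infty$ transfers to the inverse function. That transfer is elementary, because $f$ is bi-Lipschitz.

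\emph{Step 2: asymptotics of $h'$.} Multiply $h''=c\,h^\gamma(1+o(1))$ by $h'>0$ and integrate from $0$ to $t$, using $h(0)=h'(0)=0$. The integrands are positive, so the $o(1)$ factor passes through the integrals, and we obtain $\tfrac12(h')^2=\tfrac{c}{1+\gamma}\,h^{1+\gamma}(1+o(1))$. Here $1+\gamma>0$ makes $h^{1+\gamma}$ integrable from $0$. It follows that $h'=k\,h^{(1+\gamma)/2}(1+o(1))$ with $k=\sqrt{2c/(1+\gamma)}$.

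\emph{Step 3: asymptotics of $h$.} Differentiate $\phi=h^{(1-\gamma)/2}$:
\[
\phi'=\tfrac{1-\gamma}{2}\,h^{-(1+\gamma)/2}h'\to\tfrac{1-\gamma}{2}\,k .
\]
Since $\phi(0)=0$, this gives $h^{(1-\gamma)/2}/t\to(1-\gamma)k/2$.

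\emph{Step 4: the limit.} Combining the previous steps,
\[
\frac{t\,h''}{h'}=\frac{c\,h^\gamma\, t}{k\,h^{(1+\gamma)/2}}\,(1+o(1))=\frac{c}{k}\cdot\frac{t}{h^{(1-\gamma)/2}}\,(1+o(1))\to\frac{2c}{(1-\gamma)k^2}=\frac{1+\gamma}{1-\gamma}.
\]
This is the claimed limit. Note that the constant $c$, and hence the specific operator $F$, cancels in the end.
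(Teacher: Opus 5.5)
Your proof is correct and follows the paper's route: reduce the ODE to $h''=c\,h^\gamma(1+o(1))$, using a Taylor expansion at $0$ when $\gamma>0$ and hypothesis (H3) when $\gamma<0$, then integrate twice and combine. Your version is slightly cleaner in two places: you treat $\gamma=0$ separately, whereas the paper's Taylor argument for $\gamma\ge 0$ needs $h''\to 0$, which fails at $\gamma=0$; and for $\gamma<0$ you correctly write the error as a multiplicative factor $(1+o(1))$ by passing to the inverse of $f$.
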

\begin{proof}
\textit{Case 1: $\gamma \geq 0$}. Since $F$ is smooth and $F(0) = 0$, we have
\begin{align}
F(M) = F(0) + DF(0):M + O(\|M\|^2).
\end{align}
Plug in $M = D^2h$, since $F(D^2h) = h^\gamma$, we have
\begin{equation}
h^\gamma = C h'' + O(\|D^2h\|^2) .
\end{equation}
Since $h'' \leq Cx_n^{\frac{2\gamma}{1-\gamma}}$, we have as $x_n \to 0$,
\begin{equation}\label{eq:h_second_deriv_asymp}
    h'' = C h^\gamma \big(1 + o(1)\big).
\end{equation}
Multiplying by $h'$ and integrating from $0$ yields
\begin{align}
    h' h'' &= C h^\gamma h' \big(1 + o(1)\big) \nonumber, \\
    \frac{1}{2}(h')^2 &= \frac{C}{\gamma+1} h^{\gamma+1} \big(1 + o(1)\big).
\end{align}
Taking the square root yields
\begin{equation}\label{eq:h_first_deriv_asymp}
    h' = \sqrt{\frac{2C}{\gamma+1}} \, h^{\frac{\gamma+1}{2}} \big(1 + o(1)\big).
\end{equation}
Rearranging the terms, we have
\begin{equation}
    h^{-\frac{\gamma+1}{2}} h' = \sqrt{\frac{2C}{\gamma+1}} \big(1 + o(1)\big),
\end{equation}
and integrating from $x_n = 0$ provides
\begin{equation}
    \frac{2}{1-\gamma} h^{\frac{1-\gamma}{2}} = \sqrt{\frac{2C}{\gamma+1}} \, x_n \big(1 + o(1)\big).
\end{equation}
Rewriting the above equation gives
\begin{equation}\label{eq:xn_asymp}
    x_n = \frac{2}{1-\gamma} \sqrt{\frac{\gamma+1}{2C}} \, h^{\frac{1-\gamma}{2}} \big(1 + o(1)\big).
\end{equation}
Combining all the above, we have
\begin{align}
    \frac{x_n h''}{h'} &= \frac{ \left[ \frac{2}{1-\gamma} \sqrt{\frac{\gamma+1}{2C}} \, h^{\frac{1-\gamma}{2}} \right] \cdot \left[ C h^\gamma \right] }{ \left[ \sqrt{\frac{2C}{\gamma+1}} \, h^{\frac{\gamma+1}{2}} \right] } \big(1 + o(1)\big) \nonumber \\
    &= \left( \frac{2}{1-\gamma} \right) \left( \frac{\gamma+1}{2} \right) \big(1 + o(1)\big) \nonumber \\
    &= \frac{1+\gamma}{1-\gamma} \big(1 + o(1)\big).
\end{align}
Taking the limit as $x_n \to 0$ gives
\begin{equation}
    \lim_{x_n \to 0} \frac{x_n h''}{h'} = \frac{1+\gamma}{1-\gamma}.
\end{equation}

\textit{Case 2: $\gamma \in (-1,0)$}. We note that
\begin{equation}
\frac{1}{h''}F(h'') = \frac{h^\gamma}{h''}.
\end{equation}
By assumption (H3), use $t = h''$, $M = e_n \otimes e_n$, we have 
\begin{equation}
\lim_{x_n \rightarrow 0} \frac{1}{h''}F(h'') = F_\infty(e_n \otimes e_n) = C.
\end{equation}
Rewriting, this gives
\begin{equation}
h^\gamma = Ch'' + o(1).
\end{equation}
By the same calculation as in the previous case yields the result of the Proposition. 
\end{proof}

We next show that $u$ is smooth away from the free boundary by Savin's theorem \cite{savin}. Let $q(x')$ denote the parametrization of the free boundary. 

\begin{thm} \label{Savin}
If $u$ is a viscosity solution to \eqref{problem}, then there exists $\delta = \delta(\eps)$, some $\sigma >0$, such that
\begin{equation}
u \in C^{2,\sigma}(B_{1}\cap\{x_n - q(x')> \delta\}).
\end{equation} 
\end{thm}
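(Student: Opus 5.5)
Our plan is to reduce the statement to Savin's small perturbation theorem \cite{savin}, applied at every point a fixed distance $\delta$ from the free boundary after rescaling. The idea is that in the region $\{x_n - q(x') > \delta\}$ the function $u$ is positive, so it solves the non-degenerate equation $F(D^2u) = u^\gamma$. By the flatness \eqref{flat}, $u$ is there squeezed between translates of the one-dimensional profile $h$. We therefore compare $u$ with $h$ in balls of radius comparable to the distance to $\Gamma$, rescale so that $h$ becomes a fixed nondegenerate profile, and invoke \cite{savin} to conclude $C^{2,\sigma}$ estimates.

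\textbf{Step 1: geometry.} Flatness gives $u>0$ wherever $x_n > \eps$. In particular $\Gamma \subset \{|x_n|\le \eps\}$, so $|q|\le \eps$. Fix $y$ with $y_n - q(y') > \delta$, where $\delta \gg \eps$, say $\delta = \eps^{1/2}$. Set $d = y_n$, so that $d \ge \delta - \eps \ge \delta/2$, and work in $B_{d/2}(y)$. On this ball $x_n \in (d/2, 3d/2)$, hence $u \sim h(d)$ there by Proposition \ref{hprop}.

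\textbf{Step 2: rescaling.} Define
\[
w(z) = \frac{u(y + d z)}{h(d)}, \qquad z \in B_{1/2}.
\]
Then $w$ solves $\tilde F(D^2 w) = w^\gamma$, where $\tilde F(M) = \frac{h(d)}{d^2}\, d^2 h(d)^{-\gamma-1} F\!\left(\frac{h(d)}{d^2} M\right)$. We pick the normalization so that the right-hand side becomes exactly $w^\gamma$; this uses $h(d)^{1-\gamma} \sim d^2$ from Proposition \ref{hprop}. The operator $\tilde F$ is uniformly elliptic with the same $\lambda,\Lambda$. Moreover it is smooth with bounded derivatives uniformly in $d$: for $\gamma\ge 0$ this follows from the smoothness of $F$ near $0$, and for $\gamma<0$ it follows from hypothesis (H3), which is exactly what controls $tD^2F(tM)$ at large scale.

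Since $w$ is bounded away from $0$, the term $w^\gamma$ is a smooth zeroth-order term. The one-dimensional comparison functions $\bar h_\pm(z) = h(d(1+z_n) \pm \eps)/h(d)$ solve the same rescaled equation. By \eqref{flat} they trap $w$, and $\bar h_+ - \bar h_- = O(\eps/d)\le O(\eps^{1/2})$ in $C^0$, with similar bounds in $C^2$ by Proposition \ref{hprop}. So $w$ is an $O(\eps^{1/2})$-perturbation in $L^\infty$ of the smooth classical solution $\bar h := h(d(1+z_n))/h(d)$, whose Hessian is bounded and whose $C^{2,\beta}$ norm is controlled independently of $d$.

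\textbf{Step 3: Savin.} Savin's theorem, in the version for equations $G(D^2w, w, z)=0$ with $G$ smooth, states the following: if $w$ is $\eta$-close in $L^\infty$ to a classical solution with controlled $C^{2,\beta}$ norm, and $\eta$ is small enough, then $w\in C^{2,\sigma}(B_{1/4})$ with bounds. Applying it yields $\|w\|_{C^{2,\sigma}(B_{1/4})}\le C$. Scaling back gives the $C^{2,\sigma}$ bound for $u$ near $y$, with constants depending on $d\ge\delta/2$. Covering the region completes the proof.

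\textbf{Main obstacle.} We expect the main difficulty to be verifying the hypotheses of \cite{savin} uniformly in the scale $d$. This requires that $\tilde F$ retains uniform $C^2$ bounds near the range of $D^2\bar h$, which is where (H3) is needed for $\gamma<0$. It also requires that the closeness $\eps/d$ be small, which is what forces the choice $\delta(\eps)\gg\eps$.
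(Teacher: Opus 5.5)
Your proposal is correct and follows the paper's proof. You rescale at the distance to the free boundary; the paper uses $r^{-\kappa}u(x_0+ry)$ with $\kappa=2/(1-\gamma)$, which is equivalent to your $h(d)$ normalization by Proposition \ref{hprop}. You then use flatness to get $O(\eps/r)$ closeness to the rescaled one-dimensional profile, bound the second derivatives of the rescaled operator via smoothness of $F$ for $\gamma\ge 0$ and (H3) for $\gamma<0$, and invoke Savin's theorem. The only difference is cosmetic: the paper takes $\delta=C_0\eps$ with $C_0$ large, the form it later quotes in Lemma \ref{uderestimate}, and your argument works verbatim with that choice since you only need $\eps/d$ below Savin's threshold.
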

\begin{proof}

For any $x_0 \in B_{1}\cap\{x_n - q(x')> \delta\}$ with $\delta$ to be chosen later, choose $r = \frac{1}{2} (x_{0,n} - q(x_0')) > \delta/2$. Let $\kappa = \frac{2}{1-\gamma}$, and define
\begin{align}
u_r(y) &= r^{-\kappa}u(x_0+ry)\\
h_r(y) &= r^{-\kappa}h(x_0+ry).
\end{align}
where $y = {r^{-1}(x-x_0)} \in B_1(0)$. We note that $\|h_r\|$ is uniformly bounded above and below, and 
\begin{align}
D^2u &= r^{\kappa-2}D^2u_r,\\
D^2h &= r^{\kappa-2}D^2h_r.
\end{align}
Define $L(M,z)$ by
\begin{equation}
L(D^2v,v) := r^{2-\kappa}F(r^{\kappa-2}D^2v + r^{\kappa-2}D^2h_r) - (v+h_r)^\gamma. 
\end{equation}
Then we have
\begin{align}
L(0,0) &= r^{2-\kappa}F(r^{\kappa-2}D^2h_r) - (h_r)^\gamma = 0,\\
L(D^2(u_r-h_r),u_r-h_r) &= r^{2-\kappa}F(r^{\kappa-2}D^2(u_r-h_r) + r^{\kappa-2}D^2h_r) - (u_r-h_r+h_r)^\gamma = 0.
\end{align}
It follows from \eqref{flat} that
\begin{equation}
h(x_{0,n}+ry_n - \eps) \leq u(x_{0,n}+ry_n) \leq h(x_{0,n}+ry_n + \eps).
\end{equation}
Thus, using Proposition \ref{hprop} and $r \leq x_{0,n}-q(x')+ry_n \leq 3r$,
\begin{equation}
|u(x_{0,n}-q(x')+ry_n) - h(x_{0,n}-q(x')+ry_n)| 
\leq \varepsilon \sup_{y_n\in[-1,1]} h'(x_{0,n}-q(x')+ry_n) 
\leq C\varepsilon r^{\kappa-1}.
\end{equation}
Dividing by $r^{\kappa}$ yields
\begin{equation}
\|u_r - h_r\|_{L^\infty(B_1)} \leq \frac{C\varepsilon}{r}.
\end{equation}
Choosing $\delta = C_0\varepsilon$ for some $C_0$, such that $r \geq \frac{2}{3}\delta$ gives
\begin{equation}
\|u_r - h_r\|_{L^\infty(B_1)} \leq \eps_0
\end{equation}
where $\eps_0$ is the universal smallness constant in Savin's theorem.
We proceed to check that $\|D^2L\|$ is uniformly bounded in a neighborhood of $0$. We first note that for $\gamma \geq 0$,
\begin{equation}
\|r^{\kappa-2}D^2h_r\| = \|D^2h\| \leq \lambda^{-1}F(D^2h) \leq Ch^\gamma \leq Cr^{\frac{2\gamma}{1-\gamma}} \leq C.
\end{equation}
Since $D^2F$ is continuous, and the set 
\begin{equation}
\{M\in \mathcal{S}, |M| < \rho\}
\end{equation}
is compact in $S$, $D^2F(r^{\kappa-2}D^2(M+h_r))$ is uniformly bounded. When $\gamma \geq 0$, $\kappa \geq 2$, and thus
\begin{equation}
\|\D^2_{M} L(M,p)\| \leq r^{\kappa-2}\|D^2F(r^{\kappa-2}M + r^{\kappa-2}D^2h_r)\|
\end{equation} 
is uniformly bounded. 

For $\gamma < 0$, it follows from assumption (H3) by choosing $t = r^{\kappa-2}$ that
\begin{equation}
\|\D^2_{M} L(M,p)\| \leq \|r^{\kappa-2}D^2F(r^{\kappa-2}M + r^{\kappa-2}D^2h_r)\| \leq K.
\end{equation}

Now 
\begin{equation}
|D^2_{p}L(M,p)| = |\gamma(\gamma-1)(p+h_r)^{\gamma-2}|
\end{equation}
Since $h_r \sim 1$ and thus for $|p| < \delta$, the above is uniformly bounded.

Therefore, we have shown that $\|D^2L\|$ is uniformly bounded in a neighborhood of $0$. Now we can apply Savin's theorem in \cite{savin}, which yields

\begin{equation}
\|u_r-h_r\|_{C^{2,\sigma}(B_{1/2})} \leq C.
\end{equation}
Scaling back, we have
\begin{equation}
\|u-h\|_{C^{2,\sigma}(B_{r/2}(x_0))} \leq Cr^{\kappa -2-\sigma} \leq C(\delta)
\end{equation}
as $r > \delta/2$, and the result of the theorem follows.
\end{proof}

\begin{cor} \label{ukestimate}
Let $u$ be a viscosity solution to \eqref{problem}, then $u\in C^\infty(B_{1}\cap\{x_n - q(x') > \delta\})$. Moreover, there exists a positive constant $C_k$ such that for any $x \in B_{1}\cap\{x_n - q(x') > \delta\}\}$, 
\begin{equation}
|D^k u| \leq C_k (x_n- q(x'))^{\frac{2}{1-\gamma}-k}
\end{equation}
\end{cor}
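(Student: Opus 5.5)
We obtain the corollary by bootstrapping the $C^{2,\sigma}$ estimate of Theorem \ref{Savin} in the same rescaled picture used in its proof. Fix $x_0 \in B_1\cap\{x_n - q(x') > \delta\}$ and set $r = \frac12(x_{0,n}-q(x_0'))$. Let $u_r(y) = r^{-\kappa}u(x_0+ry)$ with $\kappa = \frac{2}{1-\gamma}$, and define $h_r$ in the same way. The proof of Theorem \ref{Savin} gives $\|u_r - h_r\|_{C^{2,\sigma}(B_{1/2})}\le C$ with $C$ universal. It also gives $u_r \sim h_r \sim 1$ on $B_{1/2}$, since $x_{0,n}+ry_n-q(x')$ is comparable to $r$ there and Proposition \ref{hprop} applies.

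In rescaled variables $u_r$ solves
\[
G_r(D^2u_r) := r^{2-\kappa}F(r^{\kappa-2}D^2u_r) = u_r^\gamma,
\]
and $G_r$ is uniformly elliptic with the same constants $\lambda,\Lambda$. We then differentiate this equation. Because $D^2u_r\in C^{\sigma}$, the standard difference-quotient argument shows that each $w=\partial_e u_r$ is a $C^{2,\sigma}$ solution of the linear equation
\[
a_{ij}(y)\,w_{ij} = \gamma u_r^{\gamma-1} w, \qquad a_{ij} = \frac{\partial F}{\partial r_{ij}}\big(r^{\kappa-2}D^2u_r\big).
\]
The coefficients $a_{ij}$ are uniformly elliptic and belong to $C^{\sigma}$, since $DF$ is smooth on bounded sets of matrices. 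The zeroth order term is also $C^\sigma$, because $u_r\sim 1$ and $u_r$ is smooth in its range. Interior Schauder estimates then give $w\in C^{2,\sigma}(B_{1/4})$, that is, $u_r\in C^{3,\sigma}$.

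We iterate this. If $u_r\in C^{k,\sigma}$ with $k\ge 2$, then the coefficients $a_{ij}$ and $u_r^{\gamma-1}$ lie in $C^{k-2,\sigma}$, and Schauder gives $u_r\in C^{k+1,\sigma}$ on a slightly smaller ball. Shrinking the radius at each step by a factor depending only on $k$ yields $\|D^k u_r\|_{L^\infty(B_{c_k})}\le C_k$.

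The main point requiring care is that these bounds are uniform in $r$, equivalently in the distance to the free boundary. For $\gamma\ge 0$ we have $\kappa\ge 2$, so the arguments $r^{\kappa-2}D^2u_r$ of $F$ and its derivatives stay in a fixed compact set; indeed $\|D^2 u\|\le C$ exactly as in the proof of Theorem \ref{Savin}. Every derivative of $F$ is therefore bounded on the relevant range. For $\gamma<0$ the matrices $r^{\kappa-2}D^2u_r$ become large as $r\to0$. Here we use (H3) in the way the proof of Theorem \ref{Savin} does: the relevant quantities are $r^{(\kappa-2)(m-1)}D^mF(r^{\kappa-2}M)$, and we need them bounded for each $m$. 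Hypothesis (H3) as stated controls only $m=2$. For $m\ge3$ we would either assume the natural analogue of (H3) for higher derivatives, or argue more carefully. The alternative is to observe that after the first differentiation the Hölder norms of $a_{ij}=DF(\cdot)$ are controlled via the chain rule by $r^{\kappa-2}D^2F$, which is bounded by (H3), times $D^3u_r$; higher derivatives of $a_{ij}$ then involve $r^{(\kappa-2)(m-1)}D^mF$, which is exactly where uniformity must be checked. I expect this uniform control of higher derivatives of $F$ at large arguments to be the main obstacle when $\gamma<0$.

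Finally we scale back. Since $D^k u(x_0+ry) = r^{\kappa-k}D^k u_r(y)$ and $r\sim x_{0,n}-q(x_0')$, we get
\[
|D^k u(x_0)|\le C_k\,(x_{0,n}-q(x_0'))^{\frac{2}{1-\gamma}-k}.
\]
Since $x_0$ was arbitrary, $u\in C^\infty(B_1\cap\{x_n-q(x')>\delta\})$ with the stated estimates.
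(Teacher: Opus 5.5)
Your route is the paper's. You rescale at the scale of the distance to the free boundary, bootstrap the $C^{2,\sigma}$ bound from Theorem \ref{Savin} by differentiating the equation and applying Schauder estimates, and scale back. The paper does the bootstrap step by citing Proposition 9.1 of Caffarelli--Cabr\'e. For $\gamma\ge 0$ your argument is complete. It is also more careful than the paper's, which simply asserts $\|u_r\|_{C^k}\le C$ without discussing uniformity in $r$. The qualitative claim $u\in C^\infty$ holds for every $\gamma$, since $F\in C^\infty$.

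The obstacle you flag for $\gamma<0$ is genuine, and the paper's proof has the same gap. Hypothesis (H3) controls $t\,D^2F(tM)$. That carries you through the first differentiation, giving uniform $C^{3,\sigma}$ bounds on $u_r$ and hence the estimate for $k\le 3$. But nothing in (H1)--(H3) controls $t^{m-1}D^mF(tM)$ for $m\ge 3$. Your second option, arguing more carefully, cannot succeed, because the estimate is false in general. Here is a counterexample.

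Take $F(N)=\sum_{i<n}N_{ii}+f(N_{nn})$ with $f(s)=s+\epsilon_1\zeta(s)\sin(s^2)s^{-7/2}$. Here $\zeta$ is a smooth cutoff vanishing for $s\le 1$ and equal to $1$ for $s\ge 2$, and $\epsilon_1$ is small. Then (H1)--(H3) hold, since $f''=0$ for $s\le 1$ and $|f''(s)|\lesssim s^{-3/2}$ for large $s$. However, $f'''(s)=-8\epsilon_1\cos(s^2)s^{-1/2}+O(s^{-5/2})$.

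Consider the one-dimensional solution $u=h(x_n)$ and set $\kappa=\frac{2}{1-\gamma}$. Differentiating $f(h'')=h^\gamma$ three times gives
\[
f'(h'')\,h^{(5)}=-f'''(h'')\,(h''')^3+O\big(x_n^{\kappa-5}\big).
\]
We have $h''\sim x_n^{\kappa-2}\to\infty$ and $|h'''|\sim x_n^{\kappa-3}$. Along a sequence $x_n\to 0$ with $\cos\big((h'')^2\big)=1$, the first term on the right therefore has size $x_n^{\kappa-5}\,x_n^{-\frac{3}{2}(2-\kappa)}\gg x_n^{\kappa-5}$. So $|D^5u|\le C_5\,x_n^{\kappa-5}$ fails for this exactly flat solution.

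The right repair is your first option. Assume $\sup_{t>t_0}|t^{m-1}D^mF(tM)|\le C_m$ for every $m\ge 2$ and all $M$ near $e_n\otimes e_n$. With that, your bootstrap goes through verbatim. The paper only uses the corollary with $k=1,2$ (Lemma \ref{uderestimate}), so the rest of its argument is unaffected.
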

\begin{proof}
It follows from Theorem \ref{Savin} that solutions are $C^{2,\sigma}$ in $B_{1}\cap\{x_n - q(x') >\delta\}$. Since $F \in C^\infty(S)$, it follows from Bernstein's technique, for example from Proposition 9.1 \cite{caffarellicabre}, that $u \in C^\infty(B_{1}\cap\{x_n - q(x') > \delta\})$. 

For any $x \in B_{1}\cap\{x_n - q(x') > \delta\})$, $r$ small such that $B_r(x) \subset \subset B_{1}\cap\{x_n - q(x') > \delta\})$. Let 
\begin{equation}
u_r = r^{-\kappa} u(x + ry),
\end{equation}
then $u_r \in C^\infty(B_1)$ and 
\begin{equation}
\|u_r\|_{C^k(B_{1}(x))} \leq C.
\end{equation}
Scaling back yields the Corollary.
\end{proof}

\begin{lem}\label{uderestimate}
For any $x \in B_{1/2}(x_0',q(x_0'))\cap\{x_{0,n} - q(x_0') >\delta\}$, we have
\begin{align} \label{uestimate}
&(1).u(x',x_n+q(x')) \approx h(x_n)\\
&(2).u_n(x',x_n+q(x')) \approx h(x_n)/x_n\\
&(3).|\nabla u(x',x_n+q(x'))| \leq Ch(x_n)/x_n\\
&(4). |D^2u(x',x_n+q(x'))| \leq Ch(x_n)/x_n^2\\
&(5). |\nabla \frac{u_e}{u_n}(x',x_n+q(x'))| \leq C/x_n.
\end{align}
\end{lem}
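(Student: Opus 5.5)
The plan is to reduce all five estimates to the rescaled picture already set up in the proof of Theorem \ref{Savin}. Fix $x=(x',x_n+q(x'))$ with $x_n>\delta$ and set $r=x_n/2$. Put $\kappa=\frac{2}{1-\gamma}$ and $u_r(y)=r^{-\kappa}u(x+ry)$, and let $h_r$ be the correspondingly rescaled one-dimensional profile. The translation by $q(x')$ is harmless because flatness \eqref{flat} controls $q$ within $O(\eps)$. The proof of Theorem \ref{Savin} shows that $\|u_r-h_r\|_{L^\infty(B_1)}\le C\eps/r$, which is small. It also gives, via Savin's theorem, $\|u_r-h_r\|_{C^{2,\sigma}(B_{1/2})}\le C\eps/r$. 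Here we use that Savin's estimate is linear in the size of the perturbation, not merely a bound by a constant. In parallel, Proposition \ref{hprop} gives $h_r\sim 1$, $h_r'\sim 1$ and $|h_r''|\le C$ on $B_1$, with $h_r$ depending only on $y_n$.

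Items (1)--(4) then follow by evaluating at $y=0$ and scaling back.

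\textbf{(1).} By \eqref{flat} and the mean value theorem, $|u-h(x_n)|\le C\eps h'(x_n+C\eps)$. Proposition \ref{hprop} gives $h'\sim h/x_n$ and the doubling property $h(x_n+C\eps)\le Ch(x_n)$ for $x_n>\delta=C_0\eps$. Hence $|u-h|\le C(\eps/x_n)h(x_n)\le Ch(x_n)/C_0$, which gives $u\approx h$.

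\textbf{(2) and (3).} We have $\nabla u(x)=r^{\kappa-1}\nabla u_r(0)$. The bound $|\nabla u_r(0)-h_r'(0)e_n|\le C\eps/r$ is small, while $h_r'(0)\sim 1$. Therefore $u_n\approx r^{\kappa-1}\sim h(x_n)/x_n$, and $|\nabla u|\le Ch/x_n$.

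\textbf{(4).} Similarly, $|D^2u(x)|=r^{\kappa-2}|D^2u_r(0)|\le C r^{\kappa-2}\sim Ch/x_n^2$. Alternatively, (4) is just Corollary \ref{ukestimate} with $k=2$.

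\textbf{(5).} Write $\nabla(u_e/u_n)=\nabla u_e/u_n-u_e\nabla u_n/u_n^2$. By (2)--(4), the first term is bounded by $(Ch/x_n^2)/(ch/x_n)=C/x_n$. The second term is bounded by $(Ch/x_n)(Ch/x_n^2)/(ch/x_n)^2=C/x_n$. Scale invariance makes this clean: in the rescaled variables, $u_{r,e}/u_{r,n}$ has gradient $O(1)$ at $0$, and the chain rule contributes a factor $1/r$.

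The main obstacle is the nondegeneracy $u_n\gtrsim h/x_n$ in (2), since everything in (5) divides by $u_n$. The upper bounds come for free from Corollary \ref{ukestimate}, but the lower bound needs the quantitative closeness $\|\nabla(u_r-h_r)\|\ll h_r'$. This closeness must be uniform down to the scale $x_n\sim\delta$, which is why we use $\delta=C_0\eps$ with $C_0$ large. If Savin's theorem only yields a bound $C$ rather than $C\eps/r$, I would instead argue by compactness. Suppose a sequence of rescalings has $u_{r,n}(0)\to0$. The rescaled solutions converge in $C^{2}$ (by the uniform $C^{2,\sigma}$ bound) to a solution squeezed between translates $h_r(y_n\pm\eta)$ with $\eta\to0$. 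Such a limit must equal $h_r$, whose derivative is bounded below, which is a contradiction.
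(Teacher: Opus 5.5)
Your proof is correct. For (1), (4) and (5) it is essentially the paper's argument: flatness plus Proposition \ref{hprop} for (1), Corollary \ref{ukestimate} for the upper bounds, and the quotient rule for (5). The genuine difference is in the lower bound in (2).

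\textbf{The paper's route.} The paper stays one-dimensional. It sets $v(t)=u(x',x_n+q(x')+tx_n)$ and takes a small step $\tau$ (called $\delta$ there). It then writes $x_nu_n=v'(0)=\tau^{-1}(v(0)-v(-\tau))+\tau^{-1}\int_{-\tau}^0\int_t^0 v''\,ds\,dt$, controls the double integral by $C\tau h(x_n)$ via (4), and needs $v(0)-v(-\tau)\gtrsim \tau h(x_n)$. As written, this last step is misstated. The mean value theorem is quoted with $v(-\delta')$ in place of $v'(-\delta')$, and the non-sharp comparison $u\approx h$ of (1) cannot bound a difference from below. It is repaired exactly by your quantitative form of (1): use the sandwich $h(\bar x_n-\eps)\le u\le h(\bar x_n+\eps)$ directly with $\eps\ll\tau x_n$. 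This is where $C_0$ large, depending on $\tau$, enters.

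\textbf{Your route.} You get (2) and (3) in one stroke from $C^1$-closeness of $u_r$ to $h_r$ after rescaling. This yields more: the whole gradient is within relative error $O(\eps/x_n)$ of that of the one-dimensional profile, so tangential derivatives are small compared with $u_n$. The price is an extra ingredient. Savin's theorem, as used in the paper, only gives a bound by a constant, so the linear bound $\|u_r-h_r\|_{C^{2,\sigma}}\le C\eps/r$ must be justified. One way is to write the equation for $u_r-h_r$ as a linear uniformly elliptic equation whose coefficients are $C^\sigma$, thanks to the $C^{2,\sigma}$ bound and the bound on the second derivatives of the rescaled operator, and then apply interior Schauder estimates. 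Your compactness fallback is also sound and needs only uniform $C^{1,\alpha}$ bounds on the rescalings.

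The repaired one-dimensional argument is the more elementary of the two, using nothing beyond flatness and (4).
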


\begin{proof}
We prove these properties with $x_0' = 0$ and $q(x_{0}') = 0$. The general case follows from translating and rescaling. 

By \eqref{flat}, we have
\begin{align}
h(x_n - \eps) \leq u(x', x_n) \leq h(x_n + \eps).
\end{align}
Recall that $x _n > \delta = C_0\eps$, by Proposition \ref{hprop} we have
\begin{align}
&u(x',q(x') + x_n) \leq h(x_n + \eps)  \leq C h(x_n),\\
&u(x',q(x') + x_n) \geq h(x_n - \eps) \geq c h(x_n).
\end{align}
This proves \eqref{uestimate} (1). 

We first note that (3), (4) follows from Corollary \ref{ukestimate} by taking $k =1,2$. 

For (2), let $0< \delta <1 $ small, for $t \in [-\delta,0]$, let $v(t) = u(x',x_n+q(x') + tx_n)$, then
\begin{align}
v(t) &\approx h((1+t)x_n)\\
v'(t) &= u_n(x',x_n+q(x') + tx_n) x_n\\
v''(t) &= u_{nn}(x',x_n+q(x') + tx_n) x_n^2.\\
\end{align}
We note that by (4), for $|t|$ small,
\begin{align}
v''(t) &= u_{nn}(x',x_n+q(x') + tx_n) x_n^2 \leq C\frac{h((1+t)x_n)}{(1+t)^2x_n^2} x_n^2\\
&\leq C\frac{(1+t)^{\frac{2}{1-\gamma}}x_n^{\frac{2}{1-\gamma}}}{(1+t)^2} \leq C(1+t)^{\frac{2}{1-\gamma}-2}h(x_n) \leq Ch(x_n)
\end{align}
We have
\begin{equation}
v'(0) - v'(t) = \int_{t}^0 v''(s)ds.
\end{equation}
Integrating both sides from $-\delta$ to $0$ and rearranging gives
\begin{align}
v'(0) &= \frac{v(0) - v(-\delta)}{\delta} + \frac{1}{\delta}\int_{-\delta}^0\int_{t}^0 v''(s)dsdt.
\end{align}
For the second term, we have
\begin{equation}
\frac{1}{\delta}\int_{-\delta}^0\int_{t}^0 v''(s)dsdt \leq \frac{1}{\delta}\int_{-\delta}^0 |t|Ch(x_n)dt \leq C\delta h(x_n).
\end{equation}
For the $v(0) - v(-\delta)$ term, by mean value theorem, there exists $\delta' \in [0,\delta]$ such that $v(-\delta') \delta = v(0) - v(-\delta)$. By (1), we have
\begin{align}
\frac{v(0) - v(-\delta)}{\delta} = v(-\delta') \approx h((1-\delta')x_n) \approx h(x_n).
\end{align}
Combining the above, we have
\begin{align}
C(h(x_n) - \delta h(x_n)) \leq v'(0) \leq C(h(x_n) + \delta h(x_n)).
\end{align}
Recall that $v'(0) = u_n(x',x_n+q(x'))x_n$, so choosing $\delta$ small gives (2).

To obtain (5), we differentiate and using (2)-(4) to get
\begin{equation}
\left|\nabla \frac{u_e}{u_n}(x',x_n+q(x'))\right| = \left|\frac{u_n\nabla u_e - u_e \nabla u_n}{u_n^2}\right| \leq \frac{C}{x_n}.
\end{equation}
\end{proof}

\section{Hodograph Transform} \label{Sec:Hodograph}
By the result of the previous section, $u$ is $C^{\infty}$ away from the free boundary. We perform a partial Hodograph transform $\Phi$ with respect to the 1D solution $h$, $\Phi: B_1\cap (\Omega^+\cup \Gamma) \rightarrow \RR^{n,+} \subset \RR^n$, where $\RR^{n,+}$ is the upper half plane, by choosing $v$ to be the number such that
\begin{align} \label{hodo}
u(y',v(y',y_n)) = h(y_n).
\end{align}
We note that near the boundary $\{y_n \leq \delta\}$, $v$ is potentially multivalued. In all of the following, the $u$ derivatives are evaluated at $(x',x_n) = (y',v(y',y_n))$ and $v$ derivatives are evaluated at $(y', y_n)$. Differentiating \eqref{hodo} yields
\begin{align}
&u_nv_n = h'(y_n),\\
&u_i+u_nv_i = 0.
\end{align}
This gives
\begin{align}
&u_n = \frac{h'}{v_n},\\
&u_i = -\frac{h'}{v_n}v_i.
\end{align}
We continue to differentiate \eqref{hodo} and get
\begin{align}
&u_{nn}v_n^2 + u_nv_{nn} = h''\\
&u_{in}v_n + u_{nn}v_iv_n + u_nv_{in}=0\\
&u_{ij}+u_{in}v_j+u_{nj}v_i + u_{nn}v_iv_j + u_nv_{ij}=0
\end{align}
This yields
\begin{align} \label{D^2uinv}
u_{nn} &=  \frac{h''}{v^2_n} -\frac{h'}{v^3_n}v_{nn},\\
u_{in} &= -\frac{h'}{v^2_n}v_{in} - \frac{h''}{v_n^2}v_i +\frac{h'}{v_n^3}v_iv_{nn},\\
u_{ij} &=-\frac{h'}{v_n}v_{ij} + \frac{h'}{v_n^2}v_iv_{nj} +\frac{h'}{v^2_n}v_jv_{in} - \frac{h'}{v_n^3}v_iv_jv_{nn} +\frac{h''}{v_n^2}v_iv_j.
\end{align}

Let $G(D^2v, Dv) = D^2u$. Then $v$ satisfies the PDE (in the sense of definition \ref{multivisdefn2}),
\begin{equation}\label{vpde}
F(G(D^2v,Dv)) = h^\gamma(y_n).
\end{equation}

Let $1 \leq e <n$,
\begin{equation}
w = v_e.
\end{equation}
Then $w$ satisfy the following linear PDE.
\begin{equation}\label{wpde1}
\sum_{i,j=1}^n\sum_{k,l = 1}^n\frac{\D F}{\D r^{kl}}\left(\frac{\D G^{kl}(r,p)}{\D r^{ij}} w_{ij} + \frac{\D G^{kl}(r,p)}{\D p^i} w_i\right) = 0.
\end{equation}
Here the derivative of $G$ terms can be read from \eqref{D^2uinv}. Explicitly, for $1\leq k,l,i,j < n$, we have\\
\begin{align}\label{coef1}
& \frac{\D G^{nn}}{	\D r^{nn}} = -\frac{h'}{v_n^3}, && \frac{\D G^{kn}}{\D r^{nn}} = \frac{h'}{v_n^3}v_k, &&\frac{\D G^{kl}}{\D r^{nn}} = -\frac{h'}{v_n^3} v_kv_l,\\
& \frac{\D G^{nn}}{	\D r^{in}} = 0, && \frac{\D G^{in}}{\D r^{in}} = -\frac{h'}{v_n^2}, &&\frac{\D G^{ij}}{\D r^{in}} = \frac{h'}{v_n^2}v_j,\\
& \frac{\D G^{nn}}{	\D r^{ij}} = 0, && \frac{\D G^{in}}{\D r^{ij}} = 0, &&\frac{\D G^{ij}}{\D r^{ij}} = -\frac{h'}{v_n},
\end{align}

\begin{align}\label{coef2}
&\frac{\D G^{nn}}{\D p^n} = -2\frac{h''}{v_n^3}+ 3\frac{h'}{v_n^4}v_{nn},\\
&\frac{\D G^{kn}}{\D p^n} = 2\frac{h'}{v_n^3}v_{kn}+ 2\frac{h''}{v_n^3}v_{k} - 3 \frac{h'}{v_n^4}v_kv_{nn},\\
&\frac{\D G^{kl}}{\D p^n} = \frac{h'}{v_n^2}v_{kl} - 2\frac{h'}{v_n^3}v_kv_{nl}- 2\frac{h'}{v_n^3}v_lv_{kn} + 3\frac{h'}{v_n^4}v_kv_lv_{nn} -\frac{h''}{v_n^2}v_kv_l,\\
&\frac{\D G^{nn}}{\D p^i} = 0\\
&\frac{\D G^{in}}{\D p^i} = -\frac{h''}{v_n^2} + \frac{h'}{v_n^3}v_{nn},\\
&\frac{\D G^{il}}{\D p^i} = \frac{h'}{v_n^2}v_{nl}- \frac{h'}{v_n^3}v_lv_{nn}+ \frac{h''}{v_n}v_l.
\end{align}

We next establishes some preliminary properties of $v$, which follows from the properties of $u$.
\begin{lem}\label{xnyn}
In $B_{1/2} \cap \{y_n > \delta\}$, there exists a constant $C$ such that
\begin{equation}
C^{-1}y_n \leq v(y',y_n) - q(y') \leq Cy_n.
\end{equation}
\end{lem}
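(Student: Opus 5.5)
The plan is to read off the bound directly from the defining relation \eqref{hodo}, $u(y',v(y',y_n)) = h(y_n)$, using the comparison $u(x',x_n+q(x'))\approx h(x_n)$ from Lemma \ref{uderestimate}(1) and the scaling property of $h$ in Proposition \ref{hprop}. Fix $y'$ and $y_n>\delta$, and set $s = v(y',y_n) - q(y')$. By construction $(y',v(y',y_n))$ lies in $\Omega_+$ at height $s$ above the free boundary point $(y',q(y'))$, and $u$ there equals $h(y_n)$. The aim is to show $s\sim y_n$.

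For the case where $s>\delta$ (the regime where Lemma \ref{uderestimate} applies), part (1) gives $c\,h(s)\le u(y',q(y')+s) = h(y_n)\le C\,h(s)$. Since $h$ is increasing and positive on $(0,1)$, this yields $h^{-1}(c^{-1}h(y_n))\le s\le h^{-1}(C^{-1}h(y_n))$ in the appropriate order. The last inequality of Proposition \ref{hprop} then gives $K^{C_1}y_n\le h^{-1}(Kh(y_n))\le K^{C_2}y_n$ with $K\in\{c^{-1},C^{-1}\}$ fixed. This produces $C^{-1}y_n\le s\le Cy_n$. Alternatively, the power bounds $h(t)\sim t^{2/(1-\gamma)}$ give directly $s^{2/(1-\gamma)}\sim y_n^{2/(1-\gamma)}$.

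It remains to handle the possibility $s\le\delta$ while $y_n>\delta$, and to make sure $s$ stays in the range where Lemma \ref{uderestimate} applies. Here I would bypass that lemma and use flatness \eqref{flat} directly. Since $q$ parametrizes the free boundary and flatness forces $|q|\le\eps$, we have $h(x_n-\eps)\le u\le h(x_n+\eps)$ at $x_n=q(y')+s$. This gives $h(s-2\eps)\le h(y_n)\le h(s+2\eps)$, hence $|s-y_n|\le 2\eps$. With $\delta=C_0\eps$ and $C_0$ large, $y_n>\delta$ then forces $s\ge y_n-2\eps\ge y_n/2>\delta/2$, and also $s\le y_n+2\eps\le 2y_n$. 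In fact this flatness argument alone already proves the lemma with $C=2$. I would present it as the main proof and note that the Lemma \ref{uderestimate} argument gives the same conclusion in the interior region.

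The only delicate point is multivaluedness of $v$ near $\{y_n=0\}$. For $y_n>\delta$, $u_n>0$ by Lemma \ref{uderestimate}(2) in the region $s>\delta/2$, so $v$ is well defined and single-valued there. The flatness estimate applies to every branch anyway, so the bound holds regardless. One also checks that the point stays inside $B_1$, which follows since $s\le 2y_n$ is small within $B_{1/2}$.
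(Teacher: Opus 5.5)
Your argument is correct, but your main proof differs from the paper's; the paper's proof is your secondary argument. The paper applies Lemma \ref{uderestimate}(1) at height $s=v(y',y_n)-q(y')$, substitutes $u(y',v(y',y_n))=h(y_n)$ from \eqref{hodo}, and inverts $h$.

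The paper justifies that inversion only by strict monotonicity of $h$. Monotonicity alone does not turn $C^{-1}h(s)\le h(y_n)\le Ch(s)$ into $C^{-1}s\le y_n\le Cs$. Your appeal to the last inequality of Proposition \ref{hprop} (or to $h(t)\sim t^{2/(1-\gamma)}$) supplies the step that is actually needed.

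Your main argument instead evaluates \eqref{flat} directly at $(y',v(y',y_n))$ and uses $|q|\le\varepsilon$ to get the additive bound $|v(y',y_n)-q(y')-y_n|\le 2\varepsilon$. This is the same observation the paper makes at the start of Section \ref{Sec:Harnack}, that $v$ is $\varepsilon$-close to $x_n$. Then $y_n>\delta=C_0\varepsilon$ with $C_0\ge 4$ turns this into comparability with $C=2$.

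Your route has three advantages:
\begin{enumerate}
\item It is more elementary, since it needs neither Theorem \ref{Savin} nor Lemma \ref{uderestimate}.
\item Its conclusion is sharper, because the error is additive rather than multiplicative.
\item It covers the case $y_n>\delta$ but $v-q\le\delta$. The paper's proof tacitly excludes this case, because Lemma \ref{uderestimate} requires $v-q>\delta$.
\end{enumerate}
In exchange, the paper's route uses only the two-sided comparability $u(x',q(x')+s)\approx h(s)$. That bound is scale invariant and would survive in settings with growth and nondegeneracy bounds but no sharp flatness.

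Two small points. Your single-valuedness remark invokes Lemma \ref{uderestimate}(2) for $s>\delta/2$, slightly outside its stated range $s>\delta$. This is harmless after enlarging $C_0$, and, as you note, not needed, since the bound holds on every branch. Also, $(y',v(y',y_n))\in B_1$ holds by construction, because $v$ is defined through $\Phi$ on $B_1\cap(\Omega_+\cup\Gamma)$. No separate check is required, and the one you give, which uses $s\le 2y_n$, is circular.
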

\begin{proof}
By Lemma \ref{uderestimate} (1), for $x_n = v(y',y_n) - q(y') > \delta$,
\begin{equation}
C^{-1}h(v(y',y_n) - q(y')) \leq u(y', v(y',y_n)) \leq Ch(v(y',y_n)-q(y')).
\end{equation}
By definition of the hodograph transform, $u(y', v(y',y_n)) = h(y_n)$, and thus
\begin{equation}
C^{-1}h(v(y',y_n)-q(y')) \leq h(y_n) \leq Ch(v(y',y_n)-q(y')).
\end{equation}
Since $h$ is strictly monotone, applying $h^{-1}$ gives
\begin{equation}
C^{-1}(v(y',y_n)-q(y')) \leq y_n \leq C(v(y',y_n)-q(y')),
\end{equation}
which proves the lemma.
\end{proof}
\begin{rem}
By the above Lemma \ref{xnyn}, the hodograph variable $y_n$ and the distance 
to the free boundary $x_n - q(x')$ in the original coordinates are comparable. In the rest of this section, we work in the hodograph variable and write $x$ for 
$y$. In all following estimates of the form $C/x_n$, $h(x_n)/x_n$, etc. we used the above Lemma implicitly to switch to hodograph variables.
\end{rem}

In the rest of the section, we will always be in the hodograph variable and denote it by $(x',x_n)$. 

\begin{lem}
In $B_{1/2}\cap \{x_n > \delta\}$, we have
\begin{align}
&(1). |Dv| \leq C.\\
&(2). |D^2v| \leq \frac{C}{x_n}.\\
\end{align}
\end{lem}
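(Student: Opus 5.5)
We obtain both bounds by inverting the hodograph relations of Section \ref{Sec:Hodograph}: $v$ is expressed through $u$ and its derivatives, and then Lemma \ref{uderestimate} is applied, with Lemma \ref{xnyn} used to identify the hodograph variable $x_n$ with the distance to the free boundary in the original coordinates. Throughout, the $u$-derivatives are evaluated at the point $(x',v(x',x_n))$. By Lemma \ref{xnyn}, this point lies at distance comparable to $x_n$ from $\Gamma$, so Lemma \ref{uderestimate} applies there, with $h$ evaluated at a comparable argument. Proposition \ref{hprop} (the doubling-type bound $h(Kt)\sim h(t)$) then lets us replace $h(v-q)$ by $h(x_n)$ up to constants, and likewise for $h'$.

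For (1), the first-order relations give
\begin{equation}
v_n = \frac{h'(x_n)}{u_n}, \qquad v_i = -\frac{u_i}{u_n}.
\end{equation}
Lemma \ref{uderestimate} (2) gives $u_n \approx h(x_n)/x_n$, and Proposition \ref{hprop} gives $h'(x_n)\approx h(x_n)/x_n$. Hence $v_n \approx 1$, in particular $v_n$ is bounded above and below. For the tangential derivatives, Lemma \ref{uderestimate} (3) gives $|u_i|\le Ch/x_n$, and combined with the lower bound on $u_n$ this yields $|v_i|\le C$. So $|Dv|\le C$.

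For (2), we differentiate the identities for $v_n$ and $v_i$ once more. Equivalently, we solve the second-order system \eqref{D^2uinv} for $D^2v$, which is possible because the coefficient $h'/v_n$ is nonzero. From the $u_{nn}$ equation,
\begin{equation}
v_{nn} = \frac{v_n^3}{h'}\Big(\frac{h''}{v_n^2} - u_{nn}\Big).
\end{equation}
Proposition \ref{hprop} gives $h''\le Ch'/x_n$, and Lemma \ref{uderestimate} (4) gives $|u_{nn}|\le Ch/x_n^2 \le Ch'/x_n$. Since $v_n\approx 1$, this yields $|v_{nn}|\le C/x_n$.

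Next, the $u_{in}$ equation expresses $v_{in}$ as $-\frac{v_n^2}{h'}$ times the combination $u_{in} + \frac{h''}{v_n^2}v_i - \frac{h'}{v_n^3}v_iv_{nn}$. Each term in that combination is $O(h'/x_n)$ by (1), the bound just proved for $v_{nn}$, and Lemma \ref{uderestimate} (4). Hence $|v_{in}|\le C/x_n$. The $u_{ij}$ equation is handled the same way, giving $|v_{ij}|\le C/x_n$.

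Alternatively, and more directly, one can write $v_i = -u_i/u_n$ evaluated at $(x',v)$. By the chain rule, its derivatives are $\nabla(u_i/u_n)$ times $(1,Dv)$, and Lemma \ref{uderestimate} (5) bounds these by $C/x_n$.

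The only delicate point is bookkeeping. The $u$-derivatives live at the distance $v-q \sim x_n$, not at $x_n$ itself, so every comparison must pass through Lemma \ref{xnyn} and the scaling property of $h$ in Proposition \ref{hprop}. The other essential input is the two-sided bound $u_n \approx h/x_n$, which ensures that the divisions by $u_n$ and by $h'$ are harmless.
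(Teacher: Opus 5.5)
Your proposal is correct and follows essentially the same route as the paper. Both arguments invert the first- and second-order hodograph relations to write $Dv$ and $D^2v$ in terms of $u_n$, $Du$, $D^2u$, $h'$ and $h''$, then bound each term using Lemma \ref{uderestimate}, Proposition \ref{hprop} and Lemma \ref{xnyn}. Your explicit normalization of every term by $h'\approx h/x_n$ is somewhat more careful than the paper's displayed chain of inequalities.
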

\begin{proof}
Let $1 \leq i,j \leq n-1$, for the first derivatives of $v$, we have
By Lemma \ref{hprop} and \ref{uderestimate},
\begin{align}
&|v_i| \leq \left|\frac{u_i}{u_n}\right| \leq C,
&C ^{-1} \leq |v_n| \leq \left|\frac{h'}{u_n}\right| \leq C.
\end{align}
For the second derivatives of $v$, we have
\begin{align}
&|v_{nn}| \leq \left|\frac{h''}{u_n} + \frac{u_{nn}v_n^2}{u_n}\right| \leq \frac{1}{x_n},\\
&|v_{in}| \leq \left|\frac{u_{in}v_n}{u_n} + \frac{u_{nn}v_iv_n}{u_n}\right| \leq \frac{1}{x_n}\\
&|v_{ij}| \leq \left|\frac{u_{ij}}{u_n} + \frac{u_{in}v_j}{u_n} + \frac{u_{nj}v_j}{u_n} + \frac{u_{nn}v_iv_j}{u_n} \right| \leq \frac{1}{x_n}.
\end{align}
\end{proof}
It follows immediately that
\begin{equation}
\|w\|_{L^\infty(B_{1/2}\cap \{x_n > \delta\})} = \|v_e\|_{L^\infty(B_{1/2}\cap \{x_n > \delta\})} \leq C.
\end{equation}

We finally state the definition for viscosity solutions which are compatible with  multivalued functions $v$ on $B_1\cap \{x_n >0\}$. Note that the map $G$ is order reversing, and thus $-F\circ G$ is elliptic.

\begin{defn}\label{multivisdefn2}
Let $v: B_1^+\rightarrow \RR$ be a possibly multivalued function.  We say $v$ is a \textit{viscosity solution} of \eqref{vpde} if the followings hold.

\begin{enumerate}
    \item The graph of $v$ is closed.

    \item For any 
    $x_0 \in B_1 \cap \{x_n > 0\}$ and $\phi \in C^2$ touching $v$ 
    from above (resp.\ below) at $x_0$, i.e.\ $\phi(x_0) \in v(x_0)$ 
    and $\phi \geq v$ (resp.$\phi \leq v$) in a neighborhood of $x_0$, then
    \begin{equation}
       F(G(D^2\phi(x_0),D\phi(x_0))) - h^\gamma(x_{0,n}) \leq 0. \quad (\geq 0\text{ resp. })
    \end{equation}
\end{enumerate}
\end{defn}

In the above definition, $\phi \geq v$ means $\phi$ is greater than or equal to all possible values of $v$. We note that $\partial G^{kl}/\partial r_{ij}$ is negative definite, so that $F \circ G$ is decreasing in $D^2v$, in analogy with $-\Delta$ and the inequalities in Definition \ref{multivisdefn2} are consistent with  the usual definition for viscosity solutions.

\section{Harnack Inequality}\label{Sec:Harnack}

This section proves the following Harnack type inequality for the equation \eqref{vpde} in $\{x_n > 0\}$.

\begin{thm}\label{Harnack}
    There exists a universal constant $\eps_0$ such that if $v$ is a viscosity solution to \eqref{vpde} in $B_1\cap \{x_n > 0\}$, and it satisfies for some $x_0 \in \{x_n = 0\}$,
    \begin{equation}\label{epsrflat}
        x_n + a_0 \leq v(x) \leq x_n + b_0 \text{ in } B_r^+(x_0) \subset B_1(x_0)
    \end{equation}
    with $b_0-a_0 < \eps r$, $ \eps < \eps_0$, then 
    \begin{equation}
        x_n + a_1 \leq v(x) \leq x_n + b_1 \text{ in } B^+_{r/20}(x_0) 
    \end{equation}
    with $a_0 \leq a_1 \leq b_1\leq b_0$, $b_1 - a_1 \leq (1-c)\eps r$, and $0 < c<1$ universal.
\end{thm}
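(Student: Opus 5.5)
We follow the scheme of De Silva's Harnack inequality for one-phase problems, adapted to the degenerate operator $F\circ G$. By scaling (replace $v$ by $r^{-1}v(x_0+rx)$ and $h$ by $r^{-\kappa}h(r\,\cdot)$, which solves an equation of the same type with a rescaled $F$ satisfying (H1)--(H3) uniformly), we reduce to $r=1$, $x_0=0$. Normalizing $a_0=0$ and $b_0=\eps$, we have $x_n\le v\le x_n+\eps$ in $B_1^+$. We then distinguish two cases at an interior point, say $\bar x = \tfrac{1}{5}e_n$ (or $\tfrac1{10}e_n$). Either $v(\bar x)\ge \bar x_n+\eps/2$, or $v(\bar x)\le \bar x_n+\eps/2$. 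By symmetry we treat the first, and show $v\ge x_n+c\eps$ in $B_{1/20}^+$, i.e.\ $a_1=c\eps$. The other case gives $b_1=(1-c)\eps$ by the symmetric construction.

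\emph{Step 1: interior Harnack.} Away from $\{x_n=0\}$, say in $B_{1/10}(\bar x)$, $v$ is single valued and the linearization of \eqref{vpde} at the profile $v_0=x_n$ is uniformly elliptic. For $v=x_n$ we have $G(D^2v,Dv)=D^2h$, so the equation holds exactly. Since $v-x_n\in[0,\eps]$, the function $v-x_n\ge0$ satisfies a uniformly elliptic equation with bounded measurable coefficients up to an $O(\eps^2)$ error; one can also view it as a difference of viscosity solutions of a fixed uniformly elliptic operator. The Krylov--Safonov Harnack inequality then gives $v-x_n\ge c\eps$ in $B_{1/20}(\bar x)$.

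\emph{Step 2: barrier propagating positivity to the degenerate boundary.} This is the main obstacle. We need a subsolution $\psi$ with $\psi\le v$ on the boundary of an annular region around $\bar x$ intersected with $\{x_n>0\}$, and $\psi\ge x_n+c\eps$ on $B_{1/20}^+$. Because the equation degenerates at $x_n=0$ and $v$ is multivalued there, no boundary condition is available on $\{x_n=0\}$. The barrier must therefore be a strict subsolution all the way down, in the sense of Definition \ref{multivisdefn2}. We take
\[
\psi = x_n + c\eps\,(\phi(x)-1)\quad\text{or}\quad \psi = x_n+\eps c\,\phi(x),
\]
with $\phi$ a radial function of the form $|x-\bar x|^{-N}$, modified so that $\phi$ is a function of $x'$ and $x_n$ only. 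We compute $F(G(D^2\psi,D\psi))-h^\gamma$ by linearizing around $D^2h$ using the formulas \eqref{coef1}--\eqref{coef2} together with Proposition \ref{hprop} and Proposition \ref{hconstant}. The linearized operator is, to leading order,
\[
-\tfrac{h'}{v_n^3}F_{nn}\,\partial_{nn}-\tfrac{h'}{v_n}F_{ij}\partial_{ij}+2\tfrac{h''}{v_n^3}F_{nn}\partial_n .
\]
Dividing by $h'$, this becomes $\partial_{nn}+a_{ij}\partial_{ij}-\tfrac{\alpha}{x_n}\partial_n$ up to lower order terms, with $\alpha\to 2\tfrac{1+\gamma}{1-\gamma}$ by Proposition \ref{hconstant}. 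The term $-\tfrac{\alpha}{x_n}\partial_n$ dominates near $x_n=0$. We therefore choose $\phi$ with $\partial_n\phi<0$ near $\{x_n=0\}$ (decreasing in $x_n$ there, e.g.\ by adding a term $-x_n^{2}$ or $-x_n^{\beta}$) so that the drift term has the favorable sign. This is exactly where $\gamma>-1/3$, i.e.\ $\alpha>1$, is used. With this sign, a smooth $\psi$ cannot touch $v$ from below on $\{x_n=0\}$ without violating the strict inequality. Test functions touching at points with $x_n>0$ are handled as usual, and the nonlinear error from $F$ is $O(\eps^2)$ by (H1)/(H3), absorbed for $\eps<\eps_0$.

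\emph{Step 3: comparison.} We slide the barrier: consider $\psi_t=\psi-t$ and decrease $t$ from a large value until first contact with $v$. The closed-graph condition on $v$ ensures that contact occurs at a point of the graph. Contact cannot occur on the outer sphere, where $\psi\le x_n\le v$, nor on $B_{1/20}(\bar x)$ by Step 1. It cannot occur at interior points or at $\{x_n=0\}$ by Step 2. Hence $v\ge\psi\ge x_n+c\eps$ on $B_{1/20}^+$. The delicate part is the boundary contact in Step 2, where we must check that the degenerate drift indeed forbids touching. We expect to handle this by arranging $\partial_n\psi<1$ with a quantitative margin, which makes $u_n$-type quantities blow up in the wrong direction.
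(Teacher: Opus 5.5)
Your reduction by scaling, the interior Harnack step and the sliding argument follow the paper's structure. Step 2, however, has a genuine gap, and it is the only step where the degeneracy at $\{x_n=0\}$ is actually confronted.

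First, the drift has the wrong sign. At $v=x_n$ one has $\partial G^{nn}/\partial p^n=-2h''$ by \eqref{coef2}, so for $v=x_n+\eps\phi$
\[
F\bigl(G(D^2v,Dv)\bigr)-h^\gamma\approx-\eps\,h'\Bigl[F_{nn}\bigl(\phi_{nn}+\tfrac{2h''}{h'}\,\phi_n\bigr)+\dots\Bigr],
\]
with $F_{nn}=\partial F/\partial r^{nn}(D^2h)>0$ and $2h''/h'\approx\alpha/x_n>0$ by Proposition \ref{hconstant}. The linearized operator is therefore $\partial_{nn}+\dots+\frac{\alpha}{x_n}\partial_n$, as in \eqref{nondivL}, not $\dots-\frac{\alpha}{x_n}\partial_n$. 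To contradict a touching from below at an interior point you need $F(G(D^2\psi,D\psi))<h^\gamma$, so the barrier must be \emph{increasing} in $x_n$ near the boundary. Your choice of $\partial_n\phi<0$ with a quantitative margin (equivalently $\partial_n\psi<1$, or adding $-x_n^2$) makes $\frac{\alpha}{x_n}\partial_n\phi$ negative and, as $x_n\to0$, dominant. Your barrier is then a strict supersolution exactly where you need a subsolution.

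Second, even with the sign corrected, a barrier that stays bounded up to $\{x_n=0\}$ may first touch $v$ at a boundary point. Definition \ref{multivisdefn2} imposes no inequality at points with $x_n=0$, so your claim that contact there violates the strict inequality has nothing to rest on, and neither does the appeal to $u_n$-type quantities.

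The missing idea is a barrier that tends to $-\infty$ on $\{x_n=0\}$, which forces the first contact into $\{x_n>0\}$. In your normalization the paper takes
\[
v_t=x_n+c_0\eps\bigl(\psi(x')w_\eta(x_n)-1\bigr)+t \quad\text{in } Q'_{3/4}\times(0,\delta),
\]
with $\psi$ a cutoff vanishing on the lateral boundary and $w_\eta=(x_n^\beta-\eta^\beta)/(\delta^\beta-\eta^\beta)$, $\beta<0$. Then $w_\eta'>0$, $w_\eta\to-\infty$ as $x_n\to0$, and
\[
w_\eta''+\frac{\alpha}{x_n}\,w_\eta'=\frac{\beta(\alpha+\beta-1)}{\delta^\beta-\eta^\beta}\,x_n^{\beta-2},
\]
which is positive exactly when $1-\alpha<\beta<0$. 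Such $\beta$ exists iff $\alpha>1$, i.e.\ $\gamma>-1/3$, and this is where the threshold genuinely enters. The singular $nn$-contribution, of order $h'x_n^{\beta-2}$, dominates the mixed and tangential terms for small $x_n<\delta$.

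The argument then closes as follows. Interior Harnack gives $v-x_n\ge c_0\eps$ on $\{x_n=\delta\}$, and $\psi=0$ on the lateral boundary, so you can slide $t$ up to $c_0\eps$. Letting $\eta\to0$, so that $w_\eta\to1$ pointwise, yields $v\ge x_n+c_0\eps\psi(x')$, hence $v\ge x_n+c\eps$ on $Q_{1/2,1}$.
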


By the flatness assumption, $v$ satisfies \eqref{epsrflat} with $r=1$, and we can apply the above theorem repeatedly and obtain
\begin{equation}
     x_n + a_m \leq v(x) \leq x_n + b_m \text{ in } B^+_{20^{-m}}(x_0)
\end{equation}
with $b_m - a_m \leq (1-c)^m \eps$, as long as 
\begin{equation}
    (1-c)^m20^m\eps \leq \eps_0.
\end{equation}
For all the $m$ such that the above holds, the oscillation of the function
\begin{equation}
    \tilde{v}_\eps(x) = \frac{v(x)-x_n}{\eps}
\end{equation}
is less than $(1-c)^m = 20^{-m\alpha_1}$ in $B^+_{20^{-m}}(x_0)$, and thus the following Corollary holds.
\begin{cor}\label{harnackcor}
Let $v$ be a solution to \eqref{vpde} satisfying \eqref{epsrflat} with $r=1$. Then in $B^+_1(x_0)$, for $|x-x_0| \geq \eps/\eps_0$, we have
\begin{equation}
    |\tilde{v}_\eps(x) - \tilde{v}_\eps(x_0)| \leq C|x-x_0|^{\alpha_1}.
\end{equation}
\end{cor}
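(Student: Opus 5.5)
This is the standard dyadic iteration of Theorem \ref{Harnack}, in the style of \cite{DeSilva2011}. Normalize $\tilde v_\eps(x_0)$ using the initial flatness: $v$ satisfies \eqref{epsrflat} at $x_0$ with $r=1$ and $b_0-a_0<\eps$. We apply Theorem \ref{Harnack} inductively on the balls $B^+_{r_m}(x_0)$ with $r_m=20^{-m}$. Suppose at stage $m$ we have $x_n+a_m\le v\le x_n+b_m$ in $B^+_{r_m}(x_0)$ with $b_m-a_m\le(1-c)^m\eps$. We want to view this as an $\eps_m r_m$-flatness, with $\eps_m=(1-c)^m 20^m\eps$. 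The hypothesis of Theorem \ref{Harnack} is then $\eps_m<\eps_0$.

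Under that hypothesis the theorem gives nested constants $a_m\le a_{m+1}\le b_{m+1}\le b_m$ with $b_{m+1}-a_{m+1}\le(1-c)\eps_m r_m=(1-c)^{m+1}\eps$ in $B^+_{r_{m+1}}(x_0)$, which closes the induction. One point needs checking here: Theorem \ref{Harnack} is stated for a ball $B_r^+(x_0)\subset B_1(x_0)$ with the same equation \eqref{vpde}. No rescaling of $v$ is needed, since the theorem is already formulated at general scale $r$. Choose $\alpha_1$ with $1-c=20^{-\alpha_1}$. Then, since $20^{-1}<1-c<1$, we have $\alpha_1\in(0,1)$ and $\eps_m=20^{m(1-\alpha_1)}\eps$. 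The iteration therefore runs for all $m\le M$, where $M$ is the largest integer with $20^{M(1-\alpha_1)}\eps<\eps_0$.

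Now take $x\in B_1^+(x_0)$ and pick $m$ with $r_{m+1}<|x-x_0|\le r_m$. Both $x$ and $x_0$ lie in $\overline{B^+_{r_m}(x_0)}$ (the value at $x_0$ being understood through the closed graph, Definition \ref{multivisdefn2}(1)). The oscillation of $\tilde v_\eps$ there is therefore at most $(b_m-a_m)/\eps\le(1-c)^m=20^{-m\alpha_1}\le 20^{\alpha_1}|x-x_0|^{\alpha_1}$. This requires $m\le M$, i.e.\ $|x-x_0|\gtrsim 20^{-M}$.

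The step requiring care, and the only real obstacle, is matching this range to the stated threshold $|x-x_0|\ge\eps/\eps_0$. From the definition of $M$, $20^{-M}\approx(\eps/\eps_0)^{1/(1-\alpha_1)}$, which is at most a constant multiple of $\eps/\eps_0$ because $1/(1-\alpha_1)\ge1$ and $\eps/\eps_0<1$. Hence every $x$ with $|x-x_0|\ge\eps/\eps_0$ has its $m\le M+1$. The last borderline index is absorbed by enlarging $C$, or by using $b_{M}-a_M$ on the bigger ball. This gives $|\tilde v_\eps(x)-\tilde v_\eps(x_0)|\le C|x-x_0|^{\alpha_1}$ with $C$ universal. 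For $|x-x_0|$ comparable to $1$ the bound holds trivially, since $\operatorname{osc}\tilde v_\eps\le1$ on $B_1^+$.
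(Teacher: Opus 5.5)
Your proposal is correct and follows the paper's argument: iterate Theorem \ref{Harnack} on the balls $B^+_{20^{-m}}(x_0)$ as long as $(1-c)^m 20^m\eps\le\eps_0$, then read off the oscillation bound $(1-c)^m=20^{-m\alpha_1}$ for $\tilde v_\eps$. Your explicit check that the admissible scales reach down to $(\eps/\eps_0)^{1/(1-\alpha_1)}\le \eps/\eps_0$ fills in bookkeeping the paper leaves implicit; the needed $\alpha_1<1$ is not automatic but can be assumed by shrinking $c$, and if $\alpha_1\ge 1$ the iteration never stops anyway.
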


Define the following cubes in $\{x_n >0\}$,
\begin{align}
&Q_{s,t}(x_0) = \{(x',x_n): |x'-x'_0| < s, |x_n-x_{0,n}| < t\},\\
&Q_{s}'(x_0') = \{x' \in \RR^{n-1}: |x'-x'_0| < s\}
\end{align}

The proof of the Harnack inequality follows from the following Lemma. 
\begin{lem}
    There is a universal constant $\eps_0$ such that if $v$ is a viscosity solution to \eqref{vpde} in $Q_{3/4,1}$, and it satisfies,
    \begin{equation}
        p(x) \leq v(x) \leq p(x) + \eps, x\in Q_{3/4,1}, p(x) = x_n + \sigma, |\sigma| < 1/10,
    \end{equation}
    then if at $x_0 = 1/5 e_n$, 
    \begin{equation}
        v(x_0) \geq p(x_0) + \eps/2,
    \end{equation}
    then for some $0<c<1$,
    \begin{equation}
    v \geq p+c\eps \in Q_{1/2,1}.
    \end{equation}
    
    Analogously, if at $x_0 = 1/5 e_n$, 
    \begin{equation}
        v(x_0) \leq p(x_0) + \eps/2,
    \end{equation}
    then for some $0<c<1$,
    \begin{equation}
    v \leq p + (1-c)\eps \in Q_{1/2,1}.
    \end{equation}
\end{lem}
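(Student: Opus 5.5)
We argue in the style of De Silva's Harnack lemma: a barrier comparison in which interior Harnack produces positivity near $x_0$, and a specially built barrier carries it down to the degenerate boundary $\{x_n=0\}$. We treat only the first alternative; the second follows by the symmetric argument with $v\le p+\eps$ and a barrier from above.

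\textbf{Step 1 (interior positivity).} Set $w=v-p\ge 0$. In the region $\{x_n\ge 1/10\}$ the equation \eqref{vpde} is uniformly elliptic, with ellipticity constants depending only on $\lambda,\Lambda$ and the bounds $|Dv|\le C$, $v_n\sim 1$; there $v$ is single valued, since multivaluedness occurs only in $\{x_n\le\delta\}$. Because $p$ is an exact solution (it corresponds to $u=h(x_n-\sigma)$), the difference $w$ satisfies a linear uniformly elliptic equation with bounded measurable coefficients in that region, so it lies in the class $S(\lambda',\Lambda')$. The Harnack inequality then gives $w\ge c_0\eps$ on $\overline{B_{1/20}(x_0)}$, and in fact on a larger compact set $\{|x'|\le 1/2,\ 1/10\le x_n\le 9/10\}$ by a chain of balls.

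\textbf{Step 2 (barrier reaching the boundary).} We look for a subsolution of the form $\psi=p+c\eps\,\phi$, with $\phi\ge 0$ supported in $Q_{3/4,1}$ and $\phi\ge c_1>0$ on $Q_{1/2,1}$. Its key feature is that $\phi$ depends on $x_n$ only through a function $g(x_n)$ satisfying a Neumann-type condition at $x_n=0$. Near $x_n=0$, linearizing $F\circ G$ about $p$ gives, to leading order, the operator
\begin{equation}
L\phi=\frac{h'}{v_n^3}a_{nn}\phi_{nn}+\frac{h''}{v_n^3}b\,\phi_n+\frac{h'}{v_n}a_{ij}\phi_{ij}+\dots ,
\end{equation}
that is, up to the positive factor $h'$, the expression $\phi_{nn}+\frac{\alpha}{x_n}\phi_n+\Delta'\phi$. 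By Proposition \ref{hconstant} the exponent $\alpha$ is essentially $\frac{2(1+\gamma)}{1-\gamma}$ (up to the ellipticity ratio), and $\gamma>-1/3$ gives $\alpha>1$. We take $\phi=\eta(x')\,\big(1-Mx_n^2\big)$-type profiles, or more precisely the De Silva barrier $\phi=|x-x_0|^{-K}-(3/4)^{-K}$ modified so that it is even in $x_n$ near $0$. The drift term $\frac{\alpha}{x_n}\phi_n$ then has a favorable sign, or vanishes at $x_n=0$, so $L\phi\ge 0$ away from $B_{1/20}(x_0)$. Since $\psi-p=O(\eps)$, the nonlinear error is $O(\eps^2/x_n)$ after scaling. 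This is absorbed because the linear part is of size $\eps/x_n$ with a definite sign, provided $\eps<\eps_0$. Hence $\psi$ is a strict viscosity subsolution in $Q_{3/4,1}\setminus B_{1/20}(x_0)$ in the sense of Definition \ref{multivisdefn2}.

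\textbf{Step 3 (comparison and the main obstacle).} We slide the family $\psi_t=p+t c\eps\phi$ for $t\in[0,1]$. At $t=0$ we have $\psi_0=p\le v$. A first touching point cannot lie on $\partial B_{1/20}(x_0)$ by Step 1, nor on the lateral boundary where $\phi=0$, nor in the interior because $\psi_t$ is a strict subsolution. The remaining and hardest case is touching on $\{x_n=0\}$, where $v$ may be multivalued and the equation degenerates. We will exclude it by showing that $v$ itself satisfies the Neumann-type condition in the viscosity sense. Concretely, adding $-\eta x_n^{2-\alpha}$ to the test function yields, for $\alpha>1$, an infinite favorable drift $\frac{\alpha}{x_n}\partial_n(x_n^{2-\alpha})\sim x_n^{-\alpha}$ that dominates. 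Touching from below at a boundary point therefore forces a contradiction, or it can be moved into the interior, where Step 2 applies. Handling this boundary case rigorously with multivalued $v$ is where the work concentrates, and it is the only place the restriction $\gamma>-1/3$ enters. Concluding, $v\ge p+c\eps\phi\ge p+cc_1\eps$ on $Q_{1/2,1}$.
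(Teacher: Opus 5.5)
\textbf{Verdict.} There is a genuine gap: the boundary-contact case in your Step 3 is left open, and it is the heart of the lemma.

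\textbf{The gap in Step 3.} On $\{x_n=0\}$ the function $v$ may be multivalued, and Definition \ref{multivisdefn2} imposes no equation or boundary condition there. So if a bounded barrier first touches $v$ on $\{x_n=0\}$, you get no contradiction unless you know something about $v$ at the boundary. Nothing in the hypotheses provides that. Saying that $v$ satisfies a Neumann-type condition in the viscosity sense is exactly what would have to be proved.

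The device you propose does not work in the full range of $\gamma$. For the model operator,
$\D_{nn}(x_n^{2-\alpha})+\frac{\alpha}{x_n}\D_n(x_n^{2-\alpha})=(2-\alpha)x_n^{-\alpha}$.
The second-derivative contribution $(2-\alpha)(1-\alpha)x_n^{-\alpha}$ has the same order as the drift and cancels most of it, so the drift does not ``dominate''. For $-1/3<\gamma\le 0$, i.e. $1<\alpha\le 2$, the term $-\eta x_n^{2-\alpha}$ stays bounded at $x_n=0$ and pushes the operator the wrong way for a subsolution.

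\textbf{A related problem in Step 2.} Your model operator drops the singular tangential drift $\frac{\alpha}{|x_n|}b_0^k\D_k$ of \eqref{frozen}. This term comes from $\D G^{kn}/\D p^k=-h''$ (at $v=x_n$) paired with $\D F/\D r^{kn}(D^2h)$, and the latter need not vanish because no symmetry of $F$ is assumed. For a barrier even in $x_n$, the normal drift $\frac{\alpha}{x_n}\phi_n$ is bounded. The tangential drift $\frac{\alpha}{x_n}b_0'\cdot\nabla_{x'}\phi$, however, is of order $1/x_n$, and since $\phi$ is compactly supported in $x'$ it takes both signs when $b_0'\neq 0$. Hence $L\phi\ge 0$ fails near $\{x_n=0\}$. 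For the profile $\eta(x')(1-Mx_n^2)$, even the normal part equals $-2M(1+\alpha)\eta<0$.

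\textbf{The missing idea.} The paper uses a barrier that equals $-\infty$ on $\{x_n=0\}$, so boundary contact is impossible rather than something to analyze.
\begin{itemize}
\item Interior Harnack gives $v-p\ge c_0\eps$ on $\{x_n=\delta\}$.
\item In the thin layer $Q'_{3/4}\times(0,\delta)$ the paper slides $v_t=p+c_0\eps(\psi(x')w_\eta(x_n)-1)+t$, with $w_\eta=\frac{x_n^\beta-\eta^\beta}{\delta^\beta-\eta^\beta}$ and $\beta<0$.
\item $w_\eta$ is a positive multiple of $-x_n^\beta$ plus a constant, $(x_n^\alpha(x_n^\beta)')'=\beta(\alpha+\beta-1)x_n^{\alpha+\beta-2}$, and $x_nh''/h'\to\frac{1+\gamma}{1-\gamma}$ by Proposition \ref{hconstant}. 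Hence the leading $nn$-part has the correct sign exactly when $1-\alpha\le\beta<0$, i.e. $\beta\ge\frac{-1-3\gamma}{1-\gamma}$.
\item This range is nonempty iff $\gamma>-1/3$; that is where the restriction enters.
\item Relative to the cross and tangential-drift terms, these $nn$-terms carry an extra factor $x_n^{-1}$. This is how the paper closes the subsolution inequality for small $\delta$.
\item Since $w_\eta\to-\infty$ as $x_n\to 0$ wherever $\psi>0$, the sliding family never touches $v$ on $\{x_n=0\}$, whether or not $v$ is multivalued there.
\item Letting $\eta\to0$ (so $w_\eta\to 1$ pointwise for $x_n>0$) gives $v\ge p+c_0\eps\psi$ for $0<x_n<\delta$, hence $v-p\ge c\eps$ over $Q'_{1/2}$.
\end{itemize}
Your $-\eta x_n^{2-\alpha}$ is an instance of this construction only when $2-\alpha<0$, that is, for $\gamma>0$.
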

\begin{proof}
We prove the first statement. We consider the strip
\begin{equation}
Q = Q'_{3/4}(0) \times (0,1)
\end{equation}
Fix $\delta < 1$ to be chosen later. In the region $Q'_{3/4}(x_0') \times (\delta,1)$, we note that $v - p$ satisfies a uniformly elliptic equation. Since $v$ and $x_n$ are both solutions to \eqref{vpde},
\begin{equation}
F(G(D^2v,Dv)) - F(G(D^2p,Dp)) = h^\gamma(x_n) - h^\gamma(x_n) = 0,
\end{equation}
and the left hand side can be rewritten as
\begin{equation}
F(G(D^2v,Dv)) - F(G(D^2p,Dp)) = \int_0^1 \frac{d}{dt}F(G(D^2(tv + (1-t)p),D(tv + (1-t)p)))dt.
\end{equation}
Therefore, Harnack inequality implies
\begin{equation} \label{intharnack}
v(x) - p(x) \geq c_0\eps \in \overline{Q'_{3/4}}(x_0') \times [\delta,1].
\end{equation}
Consider the barrier
\begin{align}
&v_t := p(x) + c_0\eps(\psi(x')w(x_n)-1)+t, \\
&w_\eta(x_n) := \frac{x_n^\beta - \eta^\beta}{\delta^\beta - \eta^\beta}.
\end{align}
where $\beta <0$, $\psi < 1$ is a smooth function such that $|D^2_{x'} \psi| < C$, $\psi = 0$ on $\D Q'(3/4)$ and $\psi > 0$ in $Q'(3/4)$. We note that 
\begin{align}
&w_\eta(\delta) = 1,\qquad
w_\eta(x_n) \rightarrow -\infty \text{ as } x_n \rightarrow 0\\
&w'_\eta(x_n)=\frac{\beta x_n^{\beta-1}}{\delta^\beta - \eta^\beta},\qquad
w''_\eta(x_n)=\frac{\beta(\beta-1) x_n^{\beta-2}}{\delta^\beta - \eta^\beta},\\
&w_\eta(x_n) = \frac{(x_n/\eta)^\beta-1}{(\delta/\eta)^\beta-1} \rightarrow 1, \text{ for any fixed }x_n, \text{ as }\eta \rightarrow 0.
\end{align} 

We first show that $v_t$ is a subsolution to \eqref{vpde}. By \eqref{D^2uinv}, we have
\begin{align} \label{Gvt}
G^{nn}(D^2v_t,Dv_t) &=  \frac{h''}{v^2_{t,n}} -\frac{h'}{v^3_{t,n}}v_{t,nn},\\
G^{in}(D^2v_t,Dv_t) &= -\frac{h'}{v^2_{t,n}}v_{t,in} - \frac{h''}{v_{t,n}^2}v_{t,i} +\frac{h'}{v_{t,n}^3}v_{t,i}v_{t,nn},\\
G^{ij}(D^2v_t,Dv_t) &=-\frac{h'}{v_{t,n}}v_{t,ij} + \frac{h'}{v_{t,n}^2}v_{t,i}v_{nj} +\frac{h'}{v^2_{t,n}}v_{t,j}v_{t,in} - \frac{h'}{v_{t,n}^3}v_{t,i}v_{t,j}v_{t,nn} +\frac{h''}{v_{t,n}^2}v_{t,i}v_{t,j},
\end{align}
and 
\begin{align}
&v_{t,i} = c_0\eps\psi_iw,\\
&v_{t,n} = 1+c_0\eps\psi w' ,\\
&v_{t,ij} = c_0\eps\psi_{ij}w ,\\
&v_{t,in} = c_0\eps\psi_i w',\\
&v_{t,nn} = c_0\eps \psi w''.
\end{align}
Since $h$ is a 1D solution to $F(D^2h) = h^\gamma$, we have
\begin{align}\label{ftcfg}
F(G(D^2v_t,Dv_t)) - h^\gamma &= F(G(D^2v_t,Dv_t)) - F(h'')\\
&= \int_0^1 DF(sG(D^2v_t,Dv_t) + (1-s)h''): [G(D^2v_t,Dv_t) - h'']ds.
\end{align}
and we have
\begin{align}
G^{ij}(D^2v_t,Dv_t) &= -h'c_0\eps\psi_{ij}w + O(\eps^2),\\
G^{in}(D^2v_t,Dv_t) &=-h'c_0\eps\psi_iw'-h''c_0\eps\psi_i w + O(\eps^2),\\
G^{nn}(D^2v_t,Dv_t) - h'' & = -2h''c_0\eps\psi w' - h'c_0\eps \psi w'' + O(\eps^2).
\end{align}
We claim that $G^{nn}(D^2v_t,Dv_t) - h'' \leq 0$. Note that $c_0,\eps,\psi, h' \geq 0$, so it is sufficient to have
\begin{equation}
-2\frac{h''}{h'}w'-w'' \leq 0.
\end{equation}
By Proposition \ref{hconstant}, $\frac{x_n h''}{h'} = \frac{1+\gamma}{1-\gamma} + o(1)$ as $x_n \rightarrow 0$. Therefore, we need,
\begin{align}
-2\frac{h''}{h'}w'-w'' = -2\frac{1+\gamma}{1-\gamma}c_1\beta x_n^{\beta-2} - \beta(\beta-1)c_1x_n^{\beta-2} +o(1) \leq 0.
\end{align}
for $c_1 = \frac{1}{\delta^\beta-\eta^\beta} < 0$.
Dividing by $-c_1 >0$, $\beta < 0$, and $x_n^{\beta-2} >0$ gives
\begin{equation}
2\frac{1+\gamma}{1-\gamma}+ \beta -1  \geq 0,
\end{equation}
Which is satisfied for 
\begin{equation}
\frac{-1-3\gamma}{1-\gamma}\leq \beta < 0
\end{equation}
which holds for some $\beta$ if $\gamma > -1/3$, and thus the claim is true for $x_n$ small.

Recall that 
\begin{equation}
h^\gamma \sim x_n^{\frac{2}{1-\gamma}-2}.
\end{equation}
Therefore,
\begin{align}
|G^{ij}(D^2v_t,Dv_t)| &\leq C \eps x_n^{\frac{2}{1-\gamma}+\beta-1}+ O(\eps^2),\\
|G^{in}(D^2v_t,Dv_t)| &\leq C \eps x_n^{\frac{2}{1-\gamma}+\beta-2}+ O(\eps^2),\\
|G^{nn}(D^2v_t,Dv_t) - h''|&\leq C \eps x_n^{\frac{2}{1-\gamma}+\beta-3} + O(\eps^2).
\end{align}
Since $\delta \in (0,1)$, we choose $\delta$ so that for $x_n < \delta$ small, 
\begin{align}
DF(sG(D^2v_t,Dv_t) &+ (1-s)h'') : [G(D^2v_t,Dv_t) - h'']  \\ &\leq CD_{nn}F(sG(D^2v_t,Dv_t) + (1-s)h'')[G^{nn}(D^2v_t,Dv_t) - h''] < 0
\end{align}
and thus $v_t$ is a subsolution. 

By definition, 
\begin{align}
&v_{0} = p(x) - c_0\eps < p(x) &&\text{ on } \D Q'_{3/4}(x_0) \times (0,\delta),\\
&v_{0} =  p(x) + c_0\eps(\psi(x')-1)  < p(x) + c_0\eps &&\text{ on }  Q'_{3/4}(x_0) \times \{\delta\},\\
&v_{0} \leq  p(x) - \frac{1}{2} c_0\eps < p(x) &&\text{ on }  Q'_{3/4}(x_0) \times \{0\},
\end{align}
which gives $v_{0} < v$ on $\D Q$ and maximum principle implies
\begin{equation}
v_0 < v \text{ in } Q.
\end{equation}

Now let 
\begin{equation}
t_0 = \sup\{t>0 : v_t < v \text{ in } Q'(x_0)\times(0,\delta) \} > 0.
\end{equation}
We wish to show that $t_0 \geq c_0\eps$. Then we have
\begin{equation}
v(x) \geq v_{t_0}(x) \geq p(x) + c_0\eps\psi(x')w.
\end{equation}
For any $x \in \overline{Q'}_{1/2}(x_0) \times [0,\delta]$, we can pick $\eta$ small such that $\psi(x')w(x_n) > c >0$, we conclude that 
\begin{equation}
v(x) - p(x)  \geq c\eps \text{ on } \overline{Q'}_{1/2}(x_0) \times [0,\delta].
\end{equation}

Suppose for contradiction that $t_0 < c_0\eps$ at some $\overline{x} \in Q'(x_0)\times(0,\delta)$, we have
\begin{equation} \label{vt0=v}
v_{t_0}(\overline{x}) = v(\overline{x}).
\end{equation}

By definition, 
\begin{align}
&v_{t_0} = p(x) - c_0\eps + t_0 < p(x) &&\text{ on } \D Q'_{3/4}(x_0) \times (0,\delta),\\
&v_{t_0} =  p(x) + c_0\eps(\psi(x')-1) + t_0 < p(x) + c_0\eps &&\text{ on }  Q'_{3/4}(x_0) \times \{\delta\},
\end{align}
By \eqref{intharnack}, 
\begin{equation}
v_{t_0} < v \text{ on }  Q'_{3/4}(x_0) \times \{\delta\}.
\end{equation}
Finally we note that on $Q'_{3/4}(x_0) \times \{0\}$, 
\begin{equation}
v_{t_0} < v \text{ on } Q'_{3/4}(x_0) \times \{0\}.
\end{equation}

\begin{comment}

By \eqref{nondegeneracy}, we have for any $x_n > 0$, 
\begin{equation}
h(x_n) - h(0) =  u(x',v(x',x_n)) - u(x',v(x',0)) \geq c|v(x',x_n) - v(x',0)|^{\frac{2}{1-\gamma}}.
\end{equation}
Rewriting gives
\begin{equation}
|v(x',x_n) - v(x',0)| \leq Ch(x_n)^{\frac{1-\gamma}{2}} \leq C x_n
\end{equation}
Suppose by contradiction that $\overline{x}_n = 0$,
\begin{equation}
v_{t_0}(\overline{x}', 0) = v(\overline{x}',0)
\end{equation}
We have,
\begin{align}
v_{t_0}(\overline{x}',x_n) - v(\overline{x}',x_n) &= \left(v_{t_0}(\overline{x}',x_n)\right) - \left(v_{t_0}(\overline{x}',0) + v(\overline{x}',x_n) - v(\overline{x}',0)\right)\\
& \geq x_n + c_0\eps\psi(\overline{x}')c_1 x_n^\beta - Cx_n\\
& > 0
\end{align}
for $x_n$ small since $\beta <1$, which is a contradiction to the definition of $t_0$. 
\end{comment}
Therefore we have shown
\begin{equation}
v_{t_0} < v \text{ on } \D Q,
\end{equation}
and using maximum principle again implies
\begin{equation}
v_{t_0} < v \text{ in } Q,
\end{equation}
which is a contradiction to \eqref{vt0=v} and concluded the proof of the first statement in the Lemma. The second statement follows analogously by replacing the subsolution with supersolution 
\begin{equation}
v_t := p(x) - c_0\eps(\psi(x')w(x_n)-1)-t.
\end{equation}
\end{proof}

\begin{proof}[Proof of Theorem \eqref{Harnack}]
Without loss of generality, suppose $x_0 = 0$, define $v_r(x): B_2^+(x) \rightarrow \RR$,
\begin{equation}
v_r(x) = \frac{2}{r}v(\frac{1}{2}rx).
\end{equation}
By \eqref{epsrflat}, we have
\begin{equation}
x_n - 2\eps \leq v_r(x) \leq x_n + 2\eps.
\end{equation}
We note that $Q_{3/4,1}(0) \subset B_2^+(0)$, therefore the previous Lemma applies and yields the Theorem.
\end{proof}

\section{Analysis of the Linear Equation}\label{Sec:Liouville}

This section focuses on the analysis of the linear equation \eqref{wpde1}. First we note that after multiplying equation \eqref{wpde1} by $-h'^{-1}$, the equation can be written as (where we used $u$ for $w$),
\begin{equation} \label{nondivL}
Lu = \sum_{i,j = 1}^{n}a^{ij}(x)u_{ij} + \frac{\alpha}{x_n} b^i(x) u_i=0 
\end{equation}
where $\lambda I \leq (a^{ij}(x)) \leq  \Lambda I$, $-C \leq b^i(x) \leq C$ are bounded above and below by positive constants and the constants $\lambda, \Lambda, C$ are independent of $x$.

Consider the frozen coefficient equation
\begin{equation}\label{frozen}
{L}_0u = \sum_{i,j = 1}^{n}a^{ij}_0u_{ij} + \sum_{k= 1}^{n-1}\frac{\alpha}{|x_n|} b^k_0 u_k +  \frac{\alpha}{x_n} u_n = 0
\end{equation}
where $a^{ij}_0$, $b^i_0$ are given by \eqref{coef1}, \eqref{coef2}, evaluating at $v = x_n$.

We can take without loss of generality that $a^{nn}_0 = b^n_0 = 1$. Multiplying through by $|x_n|^\alpha$, \eqref{frozen} can also be written as in divergence form
\begin{equation} \label{divL}
{\mathcal{L}}_0u := (|x_n|^\alpha u_n)_n + |x_n|^\alpha\left(\sum_{k,l=1}^{n-1}{a}^{kl}_0u_{kl} + 2{a}^{kn}_0u_{kn} + \frac{\alpha}{|x_n|}{b}^k_0 u_k\right)  = 0 
\end{equation}
where
\begin{equation}\label{alpha}
\alpha = \frac{2(1+\gamma)}{1-\gamma}.
\end{equation}

\begin{rem}
We observe that in \eqref{wpde1}, after freezing the coefficients at $v = x_n$, the normal direction can be rewritten in divergence form since
\begin{equation}
((h')^2u_n)_n = 2h'h''u_n + (h')^2u_{nn}.
\end{equation}
We take $\alpha$ to be the constant satisfying $(h')^2 \sim x_n^\alpha$, hence the model equation \eqref{frozen} and \eqref{divL}.
\end{rem}

The above equations are defined in all $\{x_n \neq 0\}$ whereas the original \eqref{wpde1} is only satisfied in $\{x_n > 0$\}. The next theorem shows next that appropriate solutions of \eqref{divL} in $\{x_n >0\}$ can be extended by even extension and solve the corresponding equation.

\begin{thm}\label{evenextension}
Suppose $u$ is a viscosity solution of $\mathcal{L}_0u = 0$ in $\{x_n >0\}$. Assume that $u$ is continuous up to $\{x_n = 0\}$. Then, for $\gamma > -1/3$, we can extend $u$ by even extension and the resulting function still denoted by $u$ is a viscosity solution to the equation $\mathcal{L}_0u = 0$.
\end{thm}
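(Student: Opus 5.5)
The plan is to check the viscosity inequalities directly at points of $\{x_n=0\}$. Away from that hyperplane the even extension $\tilde u(x',x_n)=u(x',|x_n|)$ solves the reflected equation automatically. Under $x_n\mapsto -x_n$ the operator in \eqref{frozen} maps to itself once we reflect the coefficients appropriately: $a^{kn}_0\mapsto -a^{kn}_0$, the tangential drift keeps $|x_n|$, and $\frac{\alpha}{x_n}u_n$ is invariant. So the whole content of the proof is the behaviour at $x_n=0$, where the coefficient $\alpha/x_n$ blows up.

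The natural viscosity requirement there is the following. Suppose a test function $\phi$ touches $\tilde u$ from above at $x_0=(x_0',0)$. Then $\phi$ cannot have $\phi_n(x_0)\neq 0$, because the singular drift would otherwise force the wrong sign. For such $\phi$ we need $\sum a^{ij}_0\phi_{ij}+\alpha\,\phi_{nn}+\dots\ge 0$, the limit form of $\frac{\alpha}{x_n}\phi_n\to\alpha\phi_{nn}$.

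\textbf{Step 1: rule out a nonzero normal derivative.} Since $\tilde u$ is even in $x_n$, any $\phi$ touching from above satisfies $\phi(x',x_n)\ge\tilde u$ on both sides. Hence $\phi(x',x_n)+\phi(x',-x_n)$ also touches from above, and we may replace $\phi$ by its even symmetrization. This costs nothing at second order and gives $\phi_n(x_0)=0$.

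\textbf{Step 2: the core argument, by contradiction with a barrier.} Suppose $\phi_n(x_0)=0$ but $\mathcal L_0\phi(x_0)<0$ strictly (in the limiting sense). We perturb to
\[
\phi_\eta=\phi+\eta\,(|x_n|^{1-\alpha}\ \text{-type correction}) .
\]
The two solutions of the ODE $(x_n^\alpha v_n)_n=0$ are $1$ and $x_n^{1-\alpha}$. For $\gamma>-1/3$ we have $\alpha>1$, so $x_n^{1-\alpha}\to+\infty$ as $x_n\to0$. Take instead
\[
\psi=\phi-\eta\,x_n^{1-\alpha}\cdot(-1)=\phi+\eta\,x_n^{1-\alpha} .
\]
This function sits above $u$ near $x_n=0^+$ automatically and is a strict supersolution-type test function, because $\mathcal L_0 x_n^{1-\alpha}=0$ in $\{x_n>0\}$.

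Next we slide $\psi$ down by a small constant, after first subtracting $\rho|x-x_0|^2$ to make the contact strict. The new contact point must then lie in $\{x_n>0\}$, since $\psi=+\infty$ on $\{x_n=0\}$ and $u$ is continuous up to the boundary. At that interior contact point, $\mathcal L_0\psi=\mathcal L_0\phi-\rho(\cdot)<0$ for $\eta,\rho$ small and the contact point close to $x_0$. Making this precise uses the following fact about $\phi$ even: $\frac{\alpha}{x_n}\phi_n=\alpha\phi_{nn}(x_0)+o(1)$. This contradicts the viscosity subsolution property of $u$ in $\{x_n>0\}$. The same argument from below, using $-\eta x_n^{1-\alpha}$, handles supersolutions.

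\textbf{Main obstacle.} The hard part is uniform control of the contact point as $\eta\to0$. We need the contact point $\bar x_\eta$ to converge to $x_0$, and the error terms $\frac{\alpha}{\bar x_n}(\phi_n(\bar x)-\bar x_n\phi_{nn})$ to remain $o(1)$. The tangential drift $\frac{\alpha}{|x_n|}b^k_0\phi_k$ is also singular. I would handle it by noting that the quadratic penalization forces $\phi_k$ small only if we pre-choose $\phi$ so that $\phi_k(x_0)$ is arbitrary. For that reason I expect to need either $b^k_0=0$ at $v=x_n$ (which should follow from \eqref{coef2}, since $v_k=v_{kn}=0$ there, making the $b^k_0$ terms vanish) or a tangential change of variables that absorbs them. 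Checking that $b^k_0=0$ in the frozen equation is the first thing I would verify.
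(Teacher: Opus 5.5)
You are proving a different, much stronger statement than the one posed, and the extra strength is exactly where your argument breaks. The theorem concerns $\mathcal{L}_0$, the $|x_n|^\alpha$-weighted operator in \eqref{divL}, and for this operator the check on $\{x_n=0\}$ is immediate. For any $C^2$ test function $\phi$, every term of $\mathcal{L}_0\phi$ carries a factor $|x_n|^{\alpha}$ or $|x_n|^{\alpha-1}$. Since $\alpha=\frac{2(1+\gamma)}{1-\gamma}>1$ precisely when $\gamma>-1/3$, we get $\mathcal{L}_0\phi(x_0',0)=0$, and both viscosity inequalities hold trivially. Off the hyperplane, the even function solves the reflected equation (with $a^{kn}_0\mapsto-a^{kn}_0$, as you note). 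That is the paper's whole proof, and it is where the hypothesis on $\gamma$ is used. You never make this observation. Instead you impose at $\{x_n=0\}$ a nondegenerate Neumann-type inequality $a^{ij}_0\phi_{ij}+\alpha\phi_{nn}+\dots\ge0$ for the nondivergence operator, which the statement does not ask for.

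For that stronger claim, the fact you plan to check first is false, and this is a genuine gap. At $v=x_n$, \eqref{coef2} gives $\partial G^{kn}/\partial p^k=-h''/v_n^2+h'v_{nn}/v_n^3=-h''\neq0$. So the coefficient of $w_k$ in \eqref{wpde1} is $-2h''\,\partial F/\partial r^{kn}(D^2h)$, and after normalizing, $b^k_0=a^{kn}_0/a^{nn}_0$. In $\{x_n>0\}$ this means $L_0u=x_n^{-\alpha}\operatorname{div}(x_n^{\alpha}A_0\nabla u)$, with drift along the conormal $A_0e_n$. So $b_0'\neq0$ whenever $a^{kn}_0\neq0$, which is the generic case for non-isotropic $F$. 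At your interior contact point $\bar x$, the term $\frac{\alpha}{\bar x_n}b_0'\cdot\nabla'\psi(\bar x)$ is then unbounded. When $b_0'\cdot\nabla'\phi(x_0)=0$ it behaves like $\alpha\,b_0'\cdot D^2_{x'}\phi(x_0)(\bar x'-x_0')/\bar x_n$, which has no sign and no limit. So neither the contradiction nor your ``limit form'' is available. The shear $y'=x'-b_0'x_n$ does remove the drift, but it turns $x_n\mapsto-x_n$ into the oblique reflection $(y',y_n)\mapsto(y'+2b_0'y_n,-y_n)$, so it says nothing about the $x_n$-even extension. In fact the natural boundary behavior is conormal, not Neumann. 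For instance, $u=x_1-ax_2$ with $0<a<1$ solves $u_{11}+2au_{12}+u_{22}+\frac{\alpha}{x_2}(au_1+u_2)=0$ in $\{x_2>0\}$. Its even extension $x_1-a|x_2|$ has a kink, so test functions from above with $\phi_n(x_0)\neq0$ do exist. When $b_0'=0$ your $x_n^{1-\alpha}$ barrier does work, and it gives a more informative conclusion than the paper's, which imposes nothing on the hyperplane. In that case you do not even need the contact point to converge: for even $\phi$, $\frac{\alpha}{x_n}\phi_n(x)=\alpha\phi_{nn}(x',\theta x_n)$ uniformly on $B_r^+(x_0)$. One small correction: strict contact from above needs $+\rho|x-x_0|^2$, not $-\rho|x-x_0|^2$.
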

\begin{proof}
We first check that for $\gamma > -1/3$, $u$ is a viscosity solution on $\{x_n = 0\}$. Suppose $\phi \in C^2$ touches $u$ at $(x_0',0)$ from above. Then 
\begin{align}
\mathcal{L}_0\phi = (|x_n|^\alpha \phi_n)_n + |x_n|^\alpha\left(\sum_{k,l=1}^{n-1}{a}^{kl}_0\phi_{kl} + 2{a}^{kn}_0\phi_{kn} + \frac{\alpha}{|x_n|}{b}^k_0 \phi_k\right)
\end{align}
We proceed to show all of the above terms are $0$ at $x_n = 0$. Recall that 
\begin{equation}
    \alpha = \frac{2(1+\gamma)}{1-\gamma} > 1,
    \quad \text{when} \quad \gamma > -\frac{1}{3}.
\end{equation}
Therefore as $x_n \rightarrow 0$,
\begin{align}
&\left|(x_n^\alpha{a^{nn}}_0\phi_n)_n\right| \leq C\left(|D\phi||x_n|^{\alpha-1} + |D^2\phi||x_n|^\alpha\right) \rightarrow 0.\\
&\left|x_n^\alpha{a}^{kl}_0\phi_{kl}\right| \leq C|D^2\phi||x_n|^\alpha\rightarrow 0,\\
&\left|x_n^\alpha{a}^{kn}_0\phi_{kn}\right| \leq C|D^2\phi||x_n|^\alpha\rightarrow 0,\\
&\left|\frac{\alpha}{x_n} x_n^\alpha{b}^k_0 \phi_k\right| \leq C|D\phi||x_n|^{\alpha-1} \rightarrow 0.
\end{align}
Therefore, $\mathcal{L}_0\phi \geq 0$. The case where $\phi$ touches $u$ from below follows from the same argument.

Since $u$ is a viscosity solution in $\{x_n > 0\}$ and $u$ is even, $u$ is a viscosity solution in $\{x_n \neq 0\}$. By assumption $u$ is continuous on $\RR^n$, the Theorem is proved.
\end{proof}

\begin{rem}
The above extension can be understood as a Neumann boundary condition in the following sense. If $u$ is smooth enough, for $\gamma > -1/3$, 
\begin{equation}
|x_n|^\alpha u_n \rightarrow 0 \text{ as } x_n \rightarrow 0,
\end{equation}
and $u$ satisfies equation \eqref{divL} in the following sense.
\begin{align}
\int_{B_1(x',0)} |x_n|^\alpha u_n\phi_n = \int_{B_1(x',0)}(|x_n|^\alpha a^{ij}_0u_{ij} + |x_n|^\alpha a^{in}_0u_{in} + \alpha |x_n|^{\alpha -1}b^i_0u_{i})\phi
\end{align}
for $1 \leq i,j \leq n-1$. 
\end{rem}

\begin{rem}
We note that the equation $L_0 u = 0$ and $\mathcal{L}_0 u = 0$ is equivalent in $\{x_n \neq 0\}$. We henceforth always understand the two equations at $\{x_n = 0\}$ in the sense of Theorem \ref{evenextension}. Thus $L_0 u = 0$ and $\mathcal{L}_0 u = 0$ are equivalent in $\RR^n$ and we use them interchangeably.
\end{rem}

We next show that \eqref{nondivL} has a weak maximum principle. 

\begin{lem}\label{maxprin} 
Let $u$ be a viscosity solution of ${L}_0 u =0$ in $B_r$, then
\begin{equation}
    \sup_{B_r} {u} =  \sup_{\partial B_r} {u}.
\end{equation}
\end{lem}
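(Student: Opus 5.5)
We argue by a perturbation-plus-contradiction scheme, with a separate treatment of the hyperplane $\{x_n=0\}$, where $L_0$ degenerates. By translation we take $B_r$ centered at a point of $\{x_n=0\}$ when it meets that hyperplane; otherwise $B_r\subset\{x_n\neq0\}$, the operator there is uniformly elliptic with bounded coefficients, and the classical viscosity maximum principle applies directly. Set $M=\sup_{\partial B_r}u$ and suppose for contradiction that $\sup_{B_r}u>M$.

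We perturb $u$ by a small strict subsolution of $L_0$. A natural choice is $\phi(x)=\tfrac12|x'|^2+\tfrac12x_n^2$. For $x_n\neq 0$ we use $a^{nn}_0=b^n_0=1$ to compute
\begin{equation}
L_0\phi=\operatorname{tr}(a_0)+\frac{\alpha}{|x_n|}\sum_{k<n}b^k_0x_k+\alpha .
\end{equation}
The middle term is unbounded in sign near $x_n=0$, so a quadratic in $x'$ does not work. We therefore take $\phi=\tfrac12x_n^2+\theta(x')$ with $\theta$ affine, which kills the $x'$-Hessian contributions, or even $\phi=\tfrac12x_n^2$ alone. This gives
\begin{equation}
L_0\phi=a^{nn}_0+\alpha x_n\cdot\frac{x_n}{x_n}=1+\alpha>0
\end{equation}
on $\{x_n\neq0\}$. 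Then $u_\eta=u+\eta\phi$ for small $\eta>0$ satisfies $L_0u_\eta>0$ in the viscosity sense on $\{x_n\neq0\}$, and still $\sup_{B_r}u_\eta>\sup_{\partial B_r}u_\eta$. Hence $u_\eta$ attains an interior maximum at some $\bar x$.

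If $\bar x_n\neq0$, the constant function $u_\eta(\bar x)$ touches $u_\eta$ from above. Applying the definition to $u$ with test function $u_\eta(\bar x)-\eta\phi$ gives $0\le L_0(-\eta\phi)<0$, a contradiction.

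The main obstacle is the case $\bar x_n=0$. Here Theorem \ref{evenextension} only says that test functions touching at $\{x_n=0\}$ satisfy the trivially true condition $\mathcal L_0\phi=0$, which carries no information. So the maximum cannot be excluded directly, and we must exploit the Neumann-type behaviour. The plan is to add a second perturbation, $-\mu|x_n|^{\beta'}$ with $\beta'\in(0,1)$ and $\mu\ll\eta$. Compute
\begin{equation}
L_0|x_n|^{\beta'}=\beta'(\beta'-1+\alpha)|x_n|^{\beta'-2}>0
\end{equation}
since $\alpha>1$. We must check that this is compatible with the subsolution sign: subtracting $\mu|x_n|^{\beta'}$ gives a negative contribution, so instead we add $+\mu|x_n|^{\beta'}$, which keeps $L_0u_\eta>0$ off the hyperplane. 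However, $|x_n|^{\beta'}$ has a cusp-shaped minimum along $\{x_n=0\}$. Consequently $u_\eta+\mu|x_n|^{\beta'}$ cannot attain a strict maximum on $\{x_n=0\}$ unless $u_\eta$ itself grows like $-|x_n|^{\beta'}$ there. We rule that out using evenness and continuity of $u$ together with the subsolution property in the half-spaces: compare $u$ with the one-dimensional barrier $c-C|x_n|^{1-\alpha}$-type profiles, whose exponent is negative precisely because $\gamma>-1/3$, exactly as in the barrier $w_\eta$ of the Harnack lemma. This forces the maximizer off $\{x_n=0\}$, reducing to the previous case.

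Letting $\mu\to0$ and then $\eta\to0$ yields $\sup_{B_r}u\le\sup_{\partial B_r}u$. The reverse inequality follows because $\partial B_r\subset\overline{B_r}$ and $u$ is continuous.
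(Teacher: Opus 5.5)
\textbf{Verdict: there is a genuine gap, in the case where the maximum lies on $\{x_n=0\}$.}

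Your argument for maxima off $\{x_n=0\}$ is exactly the paper's proof. The paper perturbs by $\eps|x_n|^2$ and touches from above with $u(x_0)+\eps|x_{0,n}|^2-\eps|x_n|^2$. It then gets $L_0=-2\eps(1+\alpha)<0$. The paper does not treat a maximum point on $\{x_n=0\}$ separately; it asserts the same contradiction there. That tacitly evaluates $\frac{\alpha}{x_n}\D_n(-\eps x_n^2)$ by its limit $-2\eps\alpha$. You are right that under the convention of Theorem \ref{evenextension} the viscosity test on the hyperplane is vacuous, so this case needs its own argument.

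Your argument for that case does not supply one. Adding $+\mu|x_n|^{\beta'}$ only turns ``$u_\eta$ has no maximum on the hyperplane'' into ``$u_\eta$ has no upward cusp $u_\eta(\bar x',x_n)\le u_\eta(\bar x',0)-\mu|x_n|^{\beta'}$''. That is no easier to prove. The sentence meant to exclude the cusp (comparison with ``$c-C|x_n|^{1-\alpha}$-type profiles'', using evenness) states the conclusion without a comparison argument. As written, with $C>0$, that profile tends to $-\infty$ on the hyperplane, so it cannot be an upper barrier for $u$. Evenness is not a hypothesis of the lemma and is not needed.

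The fix uses the function you name, directly and with the right sign; the $|x_n|^{\beta'}$ term can be dropped. On $\{x_n\neq0\}$ only the $\D_{nn}$ and $\frac{\alpha}{x_n}\D_n$ terms act on $|x_n|^{1-\alpha}$, and they give $(1-\alpha)(-\alpha)|x_n|^{-1-\alpha}+\alpha(1-\alpha)|x_n|^{-1-\alpha}=0$. So $|x_n|^{1-\alpha}$ is an exact solution. It blows up at the hyperplane precisely because $\alpha>1$, i.e.\ $\gamma>-1/3$. Note that $1-\alpha=\frac{-1-3\gamma}{1-\gamma}$ is the endpoint of the range of $\beta$ in the Harnack barrier.

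Set $U=u+\frac{\eta}{2}x_n^2-\nu|x_n|^{1-\alpha}$ on $\overline{B_r}\setminus\{x_n=0\}$. Since $u$ is bounded, $U\to-\infty$ at the hyperplane, so $\sup U$ is attained at some $\bar x$ with $\bar x_n\neq0$. If $\bar x\in B_r$, then $U(\bar x)-\frac{\eta}{2}x_n^2+\nu|x_n|^{1-\alpha}$ is smooth near $\bar x$ and touches $u$ from above there. Its $L_0$ equals $-\eta(1+\alpha)<0$, a contradiction. Hence $u\le\sup_{\partial B_r}u+\frac{\eta}{2}r^2+\nu|x_n|^{1-\alpha}$ off the hyperplane. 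Let $\nu\to0$, then $\eta\to0$, and reach $\{x_n=0\}$ by continuity. This argument uses no information on the hyperplane, which is exactly what the vacuous condition there allows.

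A minor point: translations in $x_n$ are not symmetries of $L_0$, so you cannot recenter $B_r$ on the hyperplane. You never use that step, so it does no harm.
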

\begin{proof}
Consider the auxiliary function 
\begin{equation}
v = u + \eps |x_n|^2.
\end{equation}
We first show that $v$ cannot attain its maximum in the interior of $B_r$. Suppose for contradiction that $v$ attains its max at some $x_0 \in B_r$, then
\begin{equation}
v(x) \leq v(x_0) = u(x_0) + \eps|x_{0,n}|^2.
\end{equation}
Which gives
\begin{equation}
u \leq u(x_0) +  \eps|x_{0,n}|^2 - \eps|x_n|^2.
\end{equation}
Therefore, $u(x_0) +  \eps|x_{0,n}|^2 - \eps|x_n|^2$ touches $u$ from above at $x_0$. We can calculate
\begin{align}
{L}_0(u(x_0) +  \eps|x_{0,n}|^2 - \eps|x_n|^2) &= -2\eps a^{nn}_0(\alpha+1) < 0.
\end{align}
which is a contradiction, and thus
\begin{equation}
\max_{B_r} v \leq \max_{\D B_r} v.
\end{equation}
Substituting gives
\begin{equation}
\max_{B_r} u + \eps |x_n|^2 \leq \max_{\D B_r} u + \eps |x_n|^2.
\end{equation}
Taking $\eps \rightarrow 0$ yields the Lemma.
\end{proof}

\begin{lem}\label{gradientbound}
Suppose $u$ is a viscosity solution to $L_0 u = 0$ in $B_1$, and $u \in C^2(\RR^n \setminus \{x_n = 0\})$. then 
\begin{equation}
\sup_{B_{1/2}}|D_{x'}u|\leq c\sup_{{B_1}}|u|.
\end{equation}
\end{lem}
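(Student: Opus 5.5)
The plan is to exploit the translation invariance of $L_0$ in the tangential directions $x'$: the coefficients $a^{ij}_0$, $b^k_0$ are constants and the degeneracy depends only on $x_n$. I would prove the gradient bound by a difference-quotient argument combined with the weak maximum principle, Lemma \ref{maxprin}, in the style of the Bernstein technique. Tangential derivatives of solutions are again solutions, but only in the region $\{x_n\neq 0\}$ where $u$ is $C^2$. So I would work with the tangential difference quotients
\begin{equation}
\delta_{h,e}u(x) = \frac{u(x+he)-u(x)}{h}, \qquad e\in\{e_1,\dots,e_{n-1}\},
\end{equation}
which are viscosity solutions of $L_0 w=0$ in $B_{1-h}$, including across $\{x_n=0\}$. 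This holds because $L_0$ is linear with $x'$-independent coefficients, and the even-extension interpretation of Theorem \ref{evenextension} is preserved under $x'$-translations and differences. At a first pass I will take for granted that the difference of two viscosity solutions of this linear equation is again a viscosity solution. Away from $\{x_n=0\}$ this is classical because $u$ is $C^2$ there. On $\{x_n=0\}$ it follows as in Theorem \ref{evenextension}, since every test function makes $\mathcal L_0\phi$ vanish there when $\alpha>1$.

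The main step is a Bernstein-type auxiliary function that localizes. First I would use the auxiliary function
\begin{equation}
z = \eta(x')^2\,(\delta_{h,e}u)^2 + M u^2,
\end{equation}
with $\eta$ a cutoff in $x'$ only, supported in $|x'|<3/4$, and $M$ large. The goal is to show $L_0 z\ge 0$ in $\{x_n\neq0\}$ in the viscosity sense, and then apply Lemma \ref{maxprin} on the cylinder. Since $\eta$ depends only on $x'$, the singular drift $\frac{\alpha}{|x_n|}b^k_0\partial_k$ hitting $\eta$ produces a bad term of size $\frac{1}{|x_n|}|\eta||D\eta|(\delta_{h,e}u)^2$. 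This term is \emph{not} controlled by $M a^{ij}_0u_iu_j$, so this is the obstacle.

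To avoid that singular term, I would drop the Bernstein approach and use a two-step argument that exploits translation invariance more directly. First, compare $u$ with its translates. The function $w=u(\cdot+he)-u$ solves $L_0w=0$ in $B_{3/4}$. To get $|w|\le Ch\sup|u|$ I need an $x'$-Lipschitz barrier. I would construct a barrier of the form
\begin{equation}
\Phi(x) = A\big(|x'-x_0'|^2 + \kappa\, x_n^2\big),
\end{equation}
and choose $\kappa$ large so that the term $\frac{\alpha}{|x_n|}\partial_n(\kappa x_n^2)=2\alpha\kappa\,\mathrm{sgn}$ dominates. The drift $\frac{\alpha}{|x_n|}b^k_0\Phi_k$ is still singular, though. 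This suggests the right route is to remove the tangential drift by a change of variables. Since $b^k_0$ is constant, I would set $x'\mapsto x' - \beta\, b_0' \,g(x_n)$ with $g$ solving $a^{nn}_0 g''+\frac{\alpha}{|x_n|}g' = \frac{\alpha}{|x_n|}$ up to lower order. This gives $g\sim |x_n|$, a Lipschitz shear that transforms $L_0$ into an operator whose tangential drift is bounded and which remains of the same form. After this shear, the Bernstein computation with $z$ goes through: the cutoff terms involve only bounded coefficients, plus the $x_n$-drift, which does not see $\eta(x')$.

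Finally, I would combine the pieces. After the shear, Lemma \ref{maxprin} applied to $z$ in $B_{3/4}$ gives $\eta^2(\delta_{h,e}u)^2\le C\sup_{B_1}u^2$, where the boundary values are bounded using $|\delta_{h,e}u|\le 2\sup|u|/h$ only where $\eta=0$. Letting $h\to0$ and undoing the Lipschitz shear, which changes $B_{1/2}$ by a controlled amount, yields $\sup_{B_{1/2}}|D_{x'}u|\le c\sup_{B_1}|u|$. The step I expect to be hardest is verifying that the shear removes the singular tangential drift without creating new singular mixed terms $a^{kn}_0$-type contributions of order $1/|x_n|$. If it fails, the fallback is to treat $\frac{\alpha}{|x_n|}b^k_0u_k$ perturbatively using the evenness of $u$ in $x_n$: this drift is odd in $x_n$, so its contributions from the two half-spaces cancel to leading order.
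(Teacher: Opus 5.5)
\textbf{Verdict.} Your key idea (the shear) is correct and handles a point the paper's proof skips. However, the Bernstein step as you wrote it does not close, for two concrete reasons given in the second paragraph. Both are repairable.

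\textbf{The shear.} The paper runs the same Bernstein scheme you start with: $\eta^2|D_{x'}u|^2+su^2$ with a standard cutoff $\eta\in C_0(B_r)$, then Lemma \ref{maxprin}, then tangential difference quotients on $\{x_n=0\}$. It asserts $L_0(\eta^2|D_{x'}u|^2)\ge -C|D_{x'}u|^2$ with $C$ depending on $\eta$. But $2\eta L_0\eta$ contains $\frac{2\alpha}{|x_n|}b^l_0\eta\eta_l$, which for a standard cutoff is unbounded near $\{x_n=0\}$ unless $b'_0=0$. Your diagnosis is exactly right, and the shear removes the problem. The step you feared is harmless. Take $y'=x'-b'_0|x_n|$, $y_n=x_n$. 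Your ODE should read $a^{nn}_0g''+\frac{\alpha}{x_n}g'=\frac{\alpha}{|x_n|}$; it is solved exactly by $g=|x_n|$ off the hyperplane. In each half-space $\partial_{x_k}=\partial_{y_k}$ and $\partial_{x_n}=\partial_{y_n}-\operatorname{sgn}(x_n)\,b'_0\cdot\nabla_{y'}$, so the first-order part becomes exactly $\frac{\alpha}{y_n}\partial_{y_n}$. Since $g''=0$ off $\{x_n=0\}$, only the second-order coefficients change, by bounded amounts (the mixed ones pick up $\operatorname{sgn}(y_n)$). No singular terms appear. Also $D_{y'}=D_{x'}$ and tangential difference quotients are unchanged, so undoing the shear costs only a covering at a scale depending on $|b'_0|$.

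\textbf{Where the Bernstein step fails.} Write $W=\delta_{h,e}u$. First, you get $\tilde L(\eta^2W^2)\ge -CW^2$, but $\tilde L(Mu^2)=2M\tilde a^{ij}u_iu_j$ controls $|Du(y)|^2$, not $W(y)^2$, which involves $u(y+he)$. The two are not pointwise comparable, so $z$ need not be a subsolution. (The paper's ``replace $D_{x'}u$ by $\tau_{x'}u$'' has the same defect.) Replace $Mu^2$ by $\frac{M}{h}\int_0^h u(\cdot+te)^2\,dt$. The coefficients do not depend on $y'$ and the segment stays in the slice $\{y_n=\mathrm{const}\}$, so $\tilde L$ of this term is at least $\frac{2M\lambda}{h}\int_0^h|\partial_e u(\cdot+te)|^2\,dt\ge 2M\lambda W^2$ by Cauchy--Schwarz. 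Second, a cutoff depending only on $x'$ does not vanish on $\partial B_{3/4}$, nor on the top and bottom of a cylinder. There the only available bound $|W|\le 2\sup|u|/h$ degenerates as $h\to0$, so the maximum principle gives nothing uniform. You do not need to avoid $x_n$-dependence: after the shear only the normal drift remains, and for $\eta$ smooth and even in $y_n$ the term $\frac{\alpha}{y_n}\eta_{y_n}$ is bounded. So use a full cutoff of that type.

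\textbf{Maximum principle.} Lemma \ref{maxprin} is stated for $L_0$, not for the sheared operator. A robust substitute across $\{y_n=0\}$ is the function $|y_n|^{1-\alpha}$, which solves the sheared equation off the hyperplane. For $\alpha>1$ and $z$ bounded (it is, because $h$ is fixed), the maximum of $z-\delta|y_n|^{1-\alpha}$, after the usual strict perturbation, is attained off $\{y_n=0\}$; then let $\delta\to0$. With these repairs the argument closes, and your odd-drift fallback is unnecessary.
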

\begin{proof}
We can differentiate the equation $L_0 u = 0$ in the tangential direction, so any tangential partial derivative $u_k$ satisfies $L_0 u_k = 0$. In the following, we sum over the index $1 \leq i,j \leq n$, $1 \leq k,l \leq n-1$.
From direct calculation
\begin{align}
L_0(|D_{x'}u|^2) &= 2a^{ij}_0u_{ik}u_{jk} + 2u_k(a^{ij}_0(u_{k})_{ij} + \frac{\alpha}{|x_n|}b^l_0(u_k)_l + \frac{\alpha}{x_n}(u_k)_n)\\
&= 2a^{ij}_0u_{ik}u_{jk} \geq c\lambda|D u_k|^2.
\end{align}
Choose a cutoff function $\phi = \eta^2$ for some $\eta \in C_0^1(B_r)$ with $\eta = 1$ in $B_{r/2}$, we have 
\begin{align}
L_0(\eta^2|D_{x'}u|^2) &= a^{ij}_0\left(2\eta_i\eta_j|D_{x'}u|^2 + 2\eta\eta_{ij}|D_{x'}u|^2 + 4\eta \eta_j u_ku_{ki} + 2\eta^2 u_{kj}u_{ki} + 2\eta^2 u_ku_{kij}\right) \\
&\quad + \frac{\alpha}{|x_n|}b^l_0 \left(2\eta\eta_i|D_{x'}u|^2 + 2\eta^2u_ku_{kl}\right)+ \frac{\alpha}{x_n} \left(2\eta\eta_n|D_{x'}u|^2 + 2\eta^2u_ku_{kn}\right)
\end{align}
We estimate the above terms in the following way.
\begin{align}
&2a^{ij}_0\eta_i\eta_j|D_{x'}u|^2 \geq 2\lambda |D\eta|^2|D_{x'}u|^2,\\
&4a^{ij}_0\eta \eta_j u_ku_{ki} \leq 4\Lambda(t|D\eta|^2|D_{x'}u|^2 + t^{-1}\eta^2 |D u_k|^2),\\
&2a^{ij}_0\eta^2u_{kj}u_{ki} \geq 2\eta^2\lambda|D u_k|^2.\\
&2a^{ij}_0\eta\eta_{ij}|D_{x'}u|^2 + \frac{\alpha}{|x_n|}b^l_02\eta\eta_l|D_{x'}u|^2 +\frac{\alpha}{x_n}2\eta\eta_n|D_{x'}u|^2 = |D_{x'}u|^2 2\eta L_0 \eta,\\
&2a^{ij}_0\eta^2 u_ku_{kij} + 2\frac{\alpha}{|x_n|}b^l_0\eta^2u_ku_{kl} + 2\frac{\alpha}{x_n}\eta^2u_ku_{kn}= 2u_k\eta^2L_0 u_k = 0.
\end{align}
Putting it together, we have
\begin{align}
L_0(\eta^2|D_{x'}u|^2)  &\geq (2\lambda|D\eta|^2 -2\eta L_0\eta - 4\Lambda t|D\eta|^2)|D_{x'}u|^2 + (2\eta^2\lambda - 4\Lambda t^{-1}\eta^2)|Du_k|^2 \geq -C|D_{x'}u|^2
\end{align}
for some positive constant $C$ depending on $\eta$ and some $t$ large. 
We note that
\begin{equation}
L_0 (u^2) = 2a^{ij}_0u_iu_j + 2u(a^{ij}_0u_{ij}+ \frac{\alpha}{|x_n|}b^l_0 u_l + \frac{\alpha}{x_n} u_n) = 2a^{ij}_0u_iu_j \geq \lambda|D u|^2
\end{equation}
Therefore, 
\begin{equation}
L_0(\eta^2 |D_{x'}u|^2 + s u^2) \geq 0
\end{equation}
for some $s$ large. Maximum principle implies 
\begin{align}
\sup_{B_r}(su^2 + \eta^2|D_{x'}u|^2) \leq \sup_{\D B_r} su^2
\end{align}
and the result of the lemma follows.

On $\{x_n = 0\}$, consider the tangential difference quotients
\begin{equation}
\tau_{x'} u = \frac{u(x+se_i) - u(x)}{|s|}.
\end{equation}
 Therefore, we can follow the computation above with $D_{x'}u$ replaced by $\tau_{x'} u$ and taking $s\rightarrow 0$ yields the Lemma.
\end{proof}

One immediate consequence is the following Liouville type theorem.
\begin{thm} \label{Liouville}
Suppose $u$ is a viscosity solution to $\mathcal{L}_0 u = 0$ in $\RR^n$. If $|u| \leq C|x|^m$ for some integer $m > 0$, then $u$ is a polynomial of degree $m$ in $x$.
\end{thm}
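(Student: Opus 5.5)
The plan is to combine the scaling invariance of $\mathcal{L}_0$ with the tangential gradient bound of Lemma \ref{gradientbound}, iterated, and then to handle the remaining one–dimensional dependence in $x_n$ through the ODE structure. The key observation is that \eqref{frozen} is invariant under the dilations $u_R(x) = u(Rx)$. Indeed, $a^{ij}_0 u_{ij}$ scales like $R^{-2}$, and $\frac{\alpha}{|x_n|}b^k_0u_k$, $\frac{\alpha}{x_n}u_n$ also scale like $R^{-2}$, because the factor $1/x_n$ contributes one power. So if $L_0u=0$ in $\RR^n$, then $L_0 u_R = 0$ as well.

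\textbf{Step 1: tangential derivatives.} Since the coefficients are constant in $x'$, tangential difference quotients of $u$ are again solutions. Apply Lemma \ref{gradientbound} to $u_R$ on $B_1$:
\begin{equation}
\sup_{B_{R/2}} |D_{x'}u| \le \frac{C}{R}\sup_{B_R}|u| \le C R^{m-1}.
\end{equation}
Iterating this $m+1$ times (each tangential derivative $D^\beta_{x'}u$ is again a solution with growth reduced by one power) gives $\sup_{B_{R/2^{m+1}}}|D^{m+1}_{x'}u| \le C R^{-1}\to 0$. Hence $u$ is a polynomial of degree at most $m$ in $x'$, with coefficients that are functions of $x_n$:
\begin{equation}
u = \sum_{|\beta|\le m} c_\beta(x_n)\,(x')^\beta .
\end{equation}
Interior regularity away from $\{x_n=0\}$, where the equation is uniformly elliptic with smooth coefficients, justifies the differentiation there. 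Across $\{x_n=0\}$ one works with difference quotients, as in the proof of Lemma \ref{gradientbound}.

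\textbf{Step 2: the $x_n$-dependence.} Substitute this expansion into $\mathcal{L}_0u=0$ and compare coefficients of $(x')^\beta$, proceeding downward from the top degree. For $|\beta|=m$ the equation reduces to $(|x_n|^\alpha c_\beta')' = 0$. The general solution on each half line is $A + B|x_n|^{1-\alpha}$. Since $\alpha>1$ for $\gamma>-1/3$, this is unbounded near $0$, while $u$ is continuous (it is a viscosity solution on all of $\RR^n$). So $B=0$ and $c_\beta$ is constant, with the same constant on both sides by evenness and continuity. For lower degrees one gets
\begin{equation}
(|x_n|^\alpha c_\beta')' = -|x_n|^\alpha\Big(\text{known terms built from } c_{\beta'} ,\ |\beta'|>|\beta|\Big),
\end{equation}
where the known terms are a polynomial in $x_n$ together with $\frac{\alpha}{|x_n|}\times$ a polynomial, coming from the $a^{kn}_0$ and $b^k_0$ terms. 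Integrating twice gives a particular solution that is polynomial in $x_n$ and $|x_n|$, plus $A+B|x_n|^{1-\alpha}$, and again $B=0$. The term $\frac{\alpha}{|x_n|}b^k_0$ yields $|x_n|$ rather than $x_n$. This is consistent with the even extension of Theorem \ref{evenextension}, so $u$ is a polynomial in $(x',|x_n|)$, and hence a polynomial in $x$ on each half-space. Finally, the growth bound $|u|\le C|x|^m$ forces the total degree to be at most $m$.

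\textbf{Main obstacle.} The hardest step is making Step 2 rigorous at $\{x_n=0\}$. One must show that the viscosity notion of Theorem \ref{evenextension}, together with continuity, excludes the singular branch $|x_n|^{1-\alpha}$ and any mismatched jumps in $c_\beta'$ across $x_n=0$. I would argue this by restricting to lines $x'=$ const: each $c_\beta$ is a finite linear combination of values of $u$ there, hence continuous and bounded near $0$. That boundedness rules out $|x_n|^{1-\alpha}$ directly.

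A second subtlety is that the statement says ``polynomial in $x$''. Odd-in-$x_n$ terms, such as the $|x_n|$ generated by $b^k_0$, must be checked to either cancel or be what the statement means by polynomial. I would verify this by direct computation in the case $b^k_0 = 0$ first, and otherwise interpret the conclusion as a polynomial in $(x',|x_n|)$.
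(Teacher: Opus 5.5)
Your route is the paper's: dilation invariance and iteration of Lemma \ref{gradientbound} kill $D^{m+1}_{x'}u$. The coefficients $c_\mu(x_n)$ are then found top-down from $(|x_n|^\alpha c_\mu')'=g_\mu$, with the branch $|x_n|^{1-\alpha}$ excluded by boundedness since $\alpha>1$. Your finite-difference remark that each $c_\mu$ is a combination of values of $u$, hence bounded, is a good way to make that exclusion rigorous.

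\textbf{The statement is false as written.} The issue you flag at the end is a real defect of the statement, not a matter of interpretation. For $\mathcal{L}_0$ as defined, with $\alpha/|x_n|$ in front of $b^k_0u_k$, the conclusion ``polynomial in $x$'' fails whenever $b'_0=(b^1_0,\dots,b^{n-1}_0)\neq0$. Take any $c\in\RR^{n-1}$ and set $u=c\cdot x'-(b'_0\cdot c)|x_n|$. Then $(|x_n|^\alpha u_n)_n=-\alpha(b'_0\cdot c)|x_n|^{\alpha-1}$ and $|x_n|^{\alpha-1}\alpha b^k_0u_k=\alpha(b'_0\cdot c)|x_n|^{\alpha-1}$, so $\mathcal{L}_0u=0$ off $\{x_n=0\}$. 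It is the even extension of the linear solution $c\cdot x'-(b'_0\cdot c)x_n$, so it is a viscosity solution on $\{x_n=0\}$ by the argument of Theorem \ref{evenextension}. It satisfies $|u|\le C|x|$ and is not a polynomial. So Step~2 cannot be completed as stated. The paper's proof has the same hole: it records $c_\mu\in\operatorname{span}\{x_n^j\}$ without tracking the forcing $|x_n|^{\alpha-1}b^k_0c_{\mu+e_k}$, which integrates to $|x_n|$, not $x_n$.

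\textbf{Your fallback also fails.} ``Polynomial in $(x',|x_n|)$'' is not correct once $a^{kn}_0\neq0$. The term $2|x_n|^\alpha a^{kn}_0(\mu_k+1)c'_{\mu+e_k}$ in the equation for $c_\mu$ is odd in $x_n$ whenever $c_{\mu+e_k}$ is even, so odd terms are generated. For instance, the bounded solution of $(|x_n|^\alpha c')'=\operatorname{sgn}(x_n)|x_n|^\alpha$ is $x_n|x_n|/(2(\alpha+1))$. Concretely, take $n=2$, $a^{11}_0=a^{22}_0=1$, $a^{12}_0=a$ with $|a|<1$, and $b^1_0=b$. Then $x_1^2-2bx_1|x_2|+\frac{\alpha b^2-1}{\alpha+1}x_2^2+\frac{2ab}{\alpha+1}x_2|x_2|$ solves the equation and is not even in $x_2$ when $ab\neq0$.

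\textbf{What your induction does prove.} It proves the correct form of the theorem: $u$ coincides on each closed half-space $\{\pm x_n\ge0\}$ with a polynomial of degree at most $m$, the two agreeing on $\{x_n=0\}$. When $b'_0=0$ the two pieces are the same polynomial. Every forcing term is then $|x_n|^\alpha$ times a polynomial, and $c''+\frac{\alpha}{x_n}c'=x_n^j$ has the solution $x_n^{j+2}/((j+2)(j+1+\alpha))$ on both sides. The piecewise statement is also all that Case~2 of the Schauder lemma needs. There the blow-up limit is $C^{2,\beta}$ with growth $|x|^{2+\beta}$, so each piece is quadratic. Continuity of $D^2\tilde u$ across $\{x_n=0\}$ then forces $D^2\tilde u$ to be constant.
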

\begin{proof}
We first note that $u \in C^\infty(\RR^n \setminus \{x_n = 0\})$. By Lemma \ref{gradientbound} and a rescale to $B_R$, we have
\begin{equation}
\sup_{B_{R}}|D_{x'}u|\leq \frac{C}{R}\sup_{{B_{2R}}}|u|.
\end{equation}
We apply the above repeatedly and get
\begin{equation}
\sup_{B_{R}}|D^{m+1}_{x'}u|\leq \frac{C}{R^{m+1}}\sup_{{B_{2R}}}|u| \leq \frac{C}{R} \rightarrow 0 \text{ as }R\rightarrow \infty.
\end{equation}
Therefore $u$ is a polynomial of degree $m$ in $x'$, that is,
\begin{equation}\label{polysol}
u(x) = \sum_{|\mu|=0}^m c_\mu(x_n)x_1^{\mu_1}\cdots x_{n-1}^{\mu_{n-1}} 
\end{equation}
where $\mu$ is the multi-index $\mu = (\mu_1,\cdots,\mu_{n-1})$. 

Let $e_i = (0,\cdots, 1,0,\cdots,0)$ which has $1$ in the $i$-th entry and $0$ elsewhere. We can find the explicit growth in $x_n$ direction by plugging \eqref{polysol} into $\mathcal{L}_0 u = 0$, which yields the equation
\begin{align}
(|x_n|^\alpha c'_\mu(x_n)x_I^\mu)' + 2|x_n|^\alpha a^{in}_0\mu_i c'_\mu(x_n)x_I^{\mu_I - e_i}  +  c_\mu(x_n) \left(a^{ij}_0 |x_n|^\alpha \mu_i\mu_j x_{I}^{\mu_I - e_i -e_j} + \alpha |x_n|^{\alpha-1} b^i_0 \mu_i x_{I}^{\mu_{I - e_i}}\right) = 0.
\end{align}
for $K =  0,1,2,...,m$ denoting the highest powers in $x_\mu$. By matching the coefficient of the term of the same powers of $x_i$'s above, we get a system of equations. For $|\mu| = K = m$,
\begin{align}
(|x_n|^\alpha c'_\mu(x_n))' = 0.
\end{align}
which has solutions in 
\begin{equation}
c_\mu(x_n) = \operatorname{sgn}(x_n) C_1 x_n^{1-\alpha} + C_2.
\end{equation}
where $\operatorname{sgn}(x_n)$ is the sign of $x_n$. We note for $\gamma > -1/3$, $\alpha > 1$, and $x_n^{1-\alpha}$ is not bounded near $x_n = 0$, therefore $C_1 = 0$. 

For $|\mu| \leq m-1$, $c_\mu(x_n)$ satisfies an ODE of the form
\begin{equation} \label{generalode}
(|x_n|^\alpha c'_\mu(x_n))'  = g(x_n)
\end{equation}
where $g(x_n)$ is a linear combination of the powers of $x_n$, such that each of the term is of the form
\begin{equation}
|x_n|^\alpha c'_{\mu + e_i},\ |x_n|^{\alpha}c_{\mu + e_i + e_j},\ |x_n|^{\alpha -1}c_{\mu + e_i}.
\end{equation}
Moreover, solutions to \eqref{generalode} changes the order of $x_n$ on the right hand side by $+2-\alpha$, and yield the homogeneous solution $x_n^{1-\alpha}$ which has to have coefficient $0$ since this term is unbounded. Following this pattern, we can read the behavior of solutions inductively, 
\begin{align}
&|\mu| = m: && c_\mu \in \text{span} \{\cancel{\operatorname{sgn}(x_n)x_n^{1-\alpha}}, x_n^0\}\\
&|\mu| = m -1: && c_\mu \in \text{span} \{\cancel{\operatorname{sgn}(x_n)x_n^{1-\alpha}},x_n^{1}, x_n^0\},\\
&|\mu| = m -2: && c_\mu \in \text{span} \{\cancel{\operatorname{sgn}(x_n)x_n^{1-\alpha}}, x_n^{2}, x_n^{1}  , x_n^0\},\\
&\cdots \\
&|\mu| = 0: && c_\mu \in \text{span} \{\cancel{\operatorname{sgn}(x_n)x_n^{1-\alpha}}, x_n^{m},\cdots x_n^{1}, x_n^0\}.
\end{align}
and the result of the Theorem follows.
\end{proof}

Before proving a Schauder theorem, we state a technical Lemma that would be helpful in the proof.

\begin{lem}\label{Conormal}
Let $u$ be a $C^{2,\beta}$ solution to $L_0 u = f$ in $B_{1}(0)$, with $f \in C^{0,\beta}(B_1)$, then 
\begin{equation}
\sum_{i=1}^n b^i_0\D_i u(x',0) = 0, \quad \sum_{i=1}^n b^i_0\D_{ij} u(x',0) = 0.
\end{equation}
 for each $j = 1,\dots,n-1$.
\end{lem}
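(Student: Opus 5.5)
The plan is to exploit the singular first-order term. Write the equation as
\begin{equation}
\frac{\alpha}{x_n}\sum_{i=1}^n b^i_0 u_i = f - \sum_{i,j=1}^n a^{ij}_0 u_{ij}, \qquad x_n \neq 0,
\end{equation}
where we take the convention $b^n_0 = 1$ as in \eqref{frozen}, and where $|x_n|$ in the tangential terms is replaced by $x_n$ on the side under consideration. Since $u \in C^{2,\beta}(B_1)$ and $f \in C^{0,\beta}(B_1)$, the right-hand side is bounded (indeed $C^{0,\beta}$) up to $\{x_n = 0\}$.

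\textbf{Step 1: the conormal identity.} Multiplying by $x_n/\alpha$ gives
\begin{equation}
\Big|\sum_i b^i_0 u_i(x',x_n)\Big| \leq \frac{|x_n|}{\alpha}\big(\|f\|_{L^\infty} + C\|D^2u\|_{L^\infty}\big).
\end{equation}
Since $u \in C^1$ up to the hyperplane, let $x_n \to 0^+$. The left side tends to $|\sum_i b^i_0 \D_i u(x',0)|$ and the right side tends to $0$. This gives the first identity. Note that one-sided approach suffices, so the sign convention in $|x_n|$ causes no trouble.

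\textbf{Step 2: the tangential derivatives.} Set $g(x) := \sum_i b^i_0 u_i(x)$, which is $C^{1,\beta}$ because $u \in C^{2,\beta}$. Step 1 shows $g(x',0) = 0$ for all $x'$ in $B_1\cap\{x_n=0\}$. Since $g$ is differentiable and vanishes identically on the hyperplane, its tangential derivatives vanish there:
\begin{equation}
\D_j g(x',0) = \sum_i b^i_0 \D_{ij}u(x',0) = 0, \qquad j = 1,\dots,n-1.
\end{equation}
Here the constancy of the frozen coefficients $b^i_0$ is used. Alternatively, one could use tangential difference quotients $[g(x'+se_j,0)-g(x',0)]/s = 0$ and let $s \to 0$.

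\textbf{Main obstacle.} Essentially none. The one point to check is that the bound in Step 1 needs uniform boundedness of $D^2u$ and $f$ near $x_n = 0$, which the $C^{2,\beta}$ and $C^{0,\beta}$ hypotheses supply. It also needs the equation to hold pointwise classically for $x_n \neq 0$, which holds because $u$ is $C^2$ there.

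If one instead wanted a quantitative version, $|g(x',x_n)| \le C|x_n|$, the same Step 1 estimate already gives it. We do not use this beyond the limit.
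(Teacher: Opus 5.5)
Your argument is correct and is essentially the paper's: multiply the equation by $x_n$, use the boundedness of $D^2u$ and $f$ near $\{x_n=0\}$ to send the right-hand side to zero, and then differentiate the resulting identity in the tangential directions. The only difference is that the paper takes both limits $s\to 0^{+}$ and $s\to 0^{-}$ and adds and subtracts them, which gives the stronger separate identities $\sum_{l=1}^{n-1} b^l_0\,\partial_l u(x',0)=0$ and $\partial_n u(x',0)=0$; your one-sided limit gives exactly the combined identity in the statement, and only needs $u$ to be $C^{2,\beta}$ up to $\{x_n=0\}$ from one side.
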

\begin{proof}
For $s \neq 0$, the equation $L_0 u = f$ reads
\[
a^{ij}_0 \D_{ij}u + \frac{\alpha}{|s|}\sum_{l=1}^{n-1} b_0^l \D_l u + \frac{\alpha}{s}\D_n u = f
\]
at $(x',s)$. Multiplying by $|s|$ and using $b_0^n = 1$,
\[
\alpha\Big(\sum_{l=1}^{n-1} b_0^l \D_l u(x',s) + \operatorname{sgn}(s)\,\D_n u(x',s)\Big)
= |s|\big(f(x',s) - a^{ij}_0 \D_{ij}u(x',s)\big) \to 0
\]
as $s \to 0$, since $u \in C^{2,\beta}$ and $f \in C^{0,\beta}$ bound the right-hand side. Taking $s \to 0^+$ and $s \to 0^-$ and adding, resp. subtracting, the resulting identities gives
\[
\sum_{l=1}^{n-1} b_0^l \D_l u(x',0) = 0, \qquad \D_n u(x',0) = 0,
\]
and in particular $\sum_{i=1}^n b_0^i \D_i u(x',0) = 0$. Differentiating the first identity tangentially in $x_j$ yields the second identity.
\end{proof}

We proceed to prove a Schauder theorem for \eqref{frozen} in the style of \cite{Simon}.

\begin{thm}\label{constantSchauder}
If $u$ is a viscosity solution to $\mathcal{L}_0 u = f$ in $B_{\rho}(0)$, then for all $\theta \in (0,1)$, we have
\begin{equation}
[u]_{C^{2,\beta}(B_{\theta \rho}(0))} \leq C([f]_{C^{0,\beta}(B_\rho(0))} + \rho^{-2-\beta}\|u\|_{L^\infty(B_\rho(0))}).
\end{equation}
\end{thm}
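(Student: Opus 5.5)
The estimate will follow Simon's blow-up and compactness argument, with the Liouville theorem (Theorem \ref{Liouville}) as the rigidity input. By scaling invariance of $\mathcal{L}_0$ ($x\mapsto \rho x$ maps solutions to solutions after multiplying $f$ by $\rho^2$), we may take $\rho=1$. By the standard covering and absorption argument of \cite{Simon}, it suffices to prove the following interpolation-type estimate for $u\in C^{2,\beta}$:
\begin{equation}
[u]_{C^{2,\beta}(B_{1/2})}\le \delta [u]_{C^{2,\beta}(B_1)} + C(\delta)\big([f]_{C^{0,\beta}(B_1)}+\|u\|_{L^\infty(B_1)}\big)
\end{equation}
for every $\delta>0$. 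The weighted seminorm version, with $d_x=\operatorname{dist}(x,\D B_1)$, then absorbs the $\delta$-term. Regularity of viscosity solutions, namely that they are $C^{2,\beta}$ so the a priori estimate applies, would be obtained by approximation: mollify tangentially, or solve with smoothed data. I will treat that step as secondary.

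Suppose the estimate fails. Then there are $u_k, f_k$ with
\begin{equation}
[u_k]_{C^{2,\beta}(B_{1/2})} > \delta [u_k]_{C^{2,\beta}(B_1)} + k\big([f_k]_{C^{0,\beta}}+\|u_k\|_{L^\infty}\big).
\end{equation}
Choose points $x_k,y_k\in B_{1/2}$ and indices with $|D^2u_k(x_k)-D^2u_k(y_k)|\ge \frac12 [u_k]_{C^{2,\beta}(B_{1/2})}|x_k-y_k|^\beta$, and set $r_k=|x_k-y_k|$. The condition $\|u_k\|_\infty$ small relative to the seminorm, together with interpolation, forces $r_k\to0$.

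We rescale around $x_k$: let
\begin{equation}
\tilde u_k(z)=\frac{u_k(x_k+r_kz)-P_k(z)}{r_k^{2+\beta}[u_k]_{C^{2,\beta}(B_{1/2})}},
\end{equation}
where $P_k$ is a correction chosen to preserve the equation. This is the main obstacle. Because the operator is not translation invariant in $x_n$, the point $x_k$ may lie at height $x_{k,n}$ with $x_{k,n}/r_k$ either bounded or tending to infinity. In the first case we translate tangentially only, so the rescaled operator is again $L_0$ with the degeneracy on a shifted hyperplane $\{z_n=-x_{k,n}/r_k\}$, converging to a translate of $\{z_n=0\}$. In the second case the drift $\alpha/x_n$ becomes $\alpha r_k/(x_{k,n}+r_kz_n)\to0$, so the limit is the constant-coefficient operator $a^{ij}_0\D_{ij}$. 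For $P_k$ I would subtract the second-order Taylor polynomial of $u_k$ at the base point. In the degenerate case, Lemma \ref{Conormal} guarantees that this polynomial is compatible with the conormal condition, so $L_0P_k$ is a constant (up to the $f$ term) and the subtracted polynomial does not produce an unbounded residual $\alpha b^i_0\D_iP_k/|x_n|$ in the rescaled equation. This is exactly the role of that lemma.

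The $\tilde u_k$ satisfy $[\tilde u_k]_{C^{2,\beta}(B_{R})}\le C/\delta$ on growing balls and vanish to second order at $0$. Hence $|\tilde u_k(z)|\le C|z|^{2+\beta}$, and $\tilde u_k$ solves the rescaled equation with right-hand side whose $C^{0,\beta}$ seminorm is at most $1/k\to0$. By Arzel\`a--Ascoli, $\tilde u_k\to \tilde u$ in $C^2_{loc}$. Viscosity solutions are stable under uniform limits, and Theorem \ref{evenextension} handles the interface, so $\tilde u$ solves $\mathcal{L}_0\tilde u=0$ (or the constant-coefficient equation) in $\RR^n$ with growth $|z|^{2+\beta}$. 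Theorem \ref{Liouville} with $m=3$, or classical Liouville in the nondegenerate case, makes $\tilde u$ a polynomial of degree at most $2$, since the growth is below cubic. Vanishing to second order at $0$ forces $\tilde u\equiv0$. This contradicts the nondegeneracy $|D^2\tilde u(0)-D^2\tilde u(e)|\ge\frac12$ at the unit point $e=(y_k-x_k)/r_k$, which passes to the limit by $C^2_{loc}$ convergence. The delicate points, to be checked carefully, are uniform $C^2$ convergence across the degenerate hyperplane and the bookkeeping of the polynomial correction via Lemma \ref{Conormal}.
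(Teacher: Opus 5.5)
Your route is the paper's: a Simon-type blow-up for an a priori estimate with a $\delta$-absorption term, a split according to whether $x_{k,n}/r_k$ stays bounded, and Theorem \ref{Liouville} or the classical Liouville theorem as the rigidity input. However, the regime $x_{k,n}/r_k\to\infty$ is not actually handled.

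You only observe that the drift acting on $\tilde u_k$ carries the factor $\epsilon_k:=r_k/x_{k,n}\to0$. The drift also acts on the subtracted Taylor polynomial at $x_k$. After subtracting $f_k(x_k)$, the rescaled equation is
\[
a^{ij}_0\D_{ij}\tilde u_k+\frac{\alpha b^i_0\,\epsilon_k}{1+\epsilon_k z_n}\,\D_i\tilde u_k=\tilde f_k+R_k,\qquad
R_k(z)=\frac{\epsilon_k}{1+\epsilon_k z_n}\Big(\frac{\alpha b^i_0\,\D_iu_k(x_k)}{x_{k,n}\,r_k^{\beta}N_k}\,z_n-\frac{\alpha b^i_0\,\D_{ij}u_k(x_k)}{r_k^{\beta}N_k}\,z_j\Big),
\]
with $N_k=[u_k]_{C^{2,\beta}(B_{1/2})}$. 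Nothing in your argument makes $R_k$ small. $Du_k(x_k)$ and $D^2u_k(x_k)$ are controlled only through $\|D^2u_k\|_{L^\infty}$, and the choice of $x_k,y_k$ gives merely $r_k^\beta N_k\le 4\|D^2u_k\|_{L^\infty}$. So there is no reason for $\epsilon_k$ times the bracket to tend to $0$.

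The paper closes this with a preliminary normalization, and this is exactly where Lemma \ref{Conormal} enters. It first subtracts from $u_k$ its second-order Taylor polynomial at the foot point $(x_k',0)$. By Lemma \ref{Conormal}, $L_0$ of that polynomial is constant, so $f_k$ only shifts by a constant. Then $Du_k$ and $D^2u_k$ vanish at $(x_k',0)$, which gives $|D^2u_k(x_k)|\le[D^2u_k]_{C^{0,\beta}(B_1)}x_{k,n}^{\beta}$ and $|Du_k(x_k)|\le[D^2u_k]_{C^{0,\beta}(B_1)}x_{k,n}^{1+\beta}$. Hence $|R_k|\le CR\,(x_{k,n}/r_k)^{\beta-1}\to0$ on $B_R$. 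So the lemma is needed in the nondegenerate regime, not only in the degenerate one.

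An alternative fix: every other term of the equation is bounded on compacts, hence so is $R_k$. Along a subsequence $R_k\to\ell$ with $\ell$ linear and $a^{ij}_0\D_{ij}\tilde u=\ell$, and $|D^2\tilde u(z)|\le C|z|^\beta$ forces $\ell\equiv0$. Some such argument has to be supplied.

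In the bounded regime, your description mixes two normalizations. If you center and expand at $x_k$, the degeneracy lies on $\{z_n=-x_{k,n}/r_k\}$. Lemma \ref{Conormal} only concerns points of $\{x_n=0\}$, so it says nothing about that polynomial. The residual $\frac{\alpha}{|x_n|}\sum_i b^i_0\D_iP_k$ then becomes, after rescaling, a term singular like $1/|z_n+x_{k,n}/r_k|$ with coefficient of size up to $R^{1+\beta}$, and nothing makes it disappear in the limit.

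You must expand at $(x_k',0)$, and it is simplest to center there too, as the paper does. Then $L_0P_k\equiv f_k(x_k',0)$, so $\tilde u_k$ solves $L_0\tilde u_k=\tilde f_k$ exactly. The nondegeneracy points $(x_k-(x_k',0))/r_k$ and $(y_k-(x_k',0))/r_k$ stay bounded, and your Liouville step (Theorem \ref{Liouville} with $m=3$, then the $|z|^{2+\beta}$ growth and second-order vanishing at the origin) goes through.
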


The above Theorem is a direct consequence of the following a priori estimate.

\begin{lem}
Let $u$ be a $C^{2,\beta}$ solution to $\mathcal{L}_0 u = f$ in $B_{1}(0)$, with $f \in C^{0,\beta}(B_1)$, then 
\begin{equation}
[u]_{C^{2,\beta}(B_{1/2}(0))} \leq C([f]_{C^{0,\beta}(B_1(0))} + \|u\|_{L^\infty(B_1(0))}).
\end{equation}
\end{lem}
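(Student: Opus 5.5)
We will prove the a priori estimate in the style of Simon's blow-up argument \cite{Simon}, by contradiction and compactness, with the Liouville theorem (Theorem \ref{Liouville}) as the rigidity input.

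\textbf{Reduction.} We first reduce to a seminorm inequality with a small constant in front of $[u]_{C^{2,\beta}}$. Specifically, we aim to show that for every $\delta>0$ there is $C_\delta$ such that for every $C^{2,\beta}$ solution on a ball $B_\rho$,
\begin{equation}
\rho^{2+\beta}[u]_{C^{2,\beta}(B_{\rho/2})}\le \delta\,\rho^{2+\beta}[u]_{C^{2,\beta}(B_\rho)}+C_\delta\big(\rho^{2+\beta}[f]_{C^{0,\beta}(B_\rho)}+\|u\|_{L^\infty(B_\rho)}\big).
\end{equation}
Simon's absorption lemma, applied over a covering by balls of varying radii, then removes the $\delta$-term and yields the Lemma. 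The following features of $\mathcal{L}_0$ make this scheme available:
\begin{enumerate}
\item $\mathcal{L}_0$ is invariant under translations in $x'$.
\item $\mathcal{L}_0$ is invariant under the parabolic-free dilation $x\mapsto rx$, since $L_0$ is homogeneous of degree $-2$: both $a^{ij}_0\partial_{ij}$ and $\frac{\alpha}{|x_n|}\partial$ scale the same way.
\item $\mathcal{L}_0$ is not invariant under translations in $x_n$, so the blow-up argument must separate centers near $\{x_n=0\}$ from centers far from it.
\end{enumerate}

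\textbf{Contradiction argument.} Suppose the reduced inequality fails. Then there are solutions $u_k$ with data $f_k$, and points $x_k,y_k$ with $|x_k-y_k|=r_k$, such that
\begin{equation}
\frac{|D^2u_k(x_k)-D^2u_k(y_k)|}{r_k^\beta}\ge \tfrac12[u_k]_{C^{2,\beta}}, \qquad [u_k]_{C^{2,\beta}}=1,
\end{equation}
while $[f_k]_{C^{0,\beta}}+\|u_k\|_\infty\to0$. This forces $r_k\to0$. We then form
\begin{equation}
\tilde u_k(z)=r_k^{-2-\beta}\big(u_k(x_k+r_kz)-P_k(x_k+r_kz)\big),
\end{equation}
where $P_k$ is the second-order Taylor polynomial of $u_k$ at $x_k$. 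When $\mathrm{dist}(x_k,\{x_n=0\})\lesssim r_k$, we instead center at the projection $(x_k',0)$ and take the Taylor polynomial there.

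To keep $\tilde u_k$ a solution, $P_k$ must itself satisfy the equation up to a controlled error. Here Lemma \ref{Conormal} enters: at boundary points $\partial_nu=0$, $\sum_l b^l_0\partial_lu=0$, and the same holds for the tangential derivatives. As a result, $L_0P_k$ has no $1/|x_n|$ singularity, and the Taylor polynomial can be corrected to an exact solution using the explicit low-degree solutions computed in the proof of Theorem \ref{Liouville}.

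\textbf{Passing to the limit.} The normalized functions satisfy $[\tilde u_k]_{C^{2,\beta}}\le1$, vanish to second order at $0$, and solve $\mathcal{L}_0\tilde u_k=\tilde f_k$ with $[\tilde f_k]_{C^{0,\beta}}\to0$. By Arzel\`a--Ascoli they converge in $C^2_{loc}$ to some $\tilde u$. There are two cases.
\begin{enumerate}
\item If $\mathrm{dist}(x_k,\{x_n=0\})/r_k\to\infty$, the limit equation is a constant-coefficient uniformly elliptic equation on all of $\RR^n$. The first-order term vanishes in the limit because it is scaled by $r_k/\mathrm{dist}\to 0$.
\item Otherwise the limit solves $\mathcal{L}_0\tilde u=c$ on $\RR^n$ for a constant $c$. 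Stability of viscosity solutions, together with Theorem \ref{evenextension}, controls the interface.
\end{enumerate}
In either case $|\tilde u|\le C|z|^{2+\beta}$. Theorem \ref{Liouville}, applied after subtracting a particular quadratic solution for $c$, gives that $\tilde u$ is a polynomial of degree at most $2$, so $D^2\tilde u$ is constant. This contradicts the normalization, which passes to the limit because the points $0$ and $(y_k-x_k)/r_k$ stay at unit distance and $C^2_{loc}$ convergence preserves $|D^2\tilde u(0)-D^2\tilde u(e)|\ge\tfrac12$.

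\textbf{Main obstacle.} The key difficulty is the intermediate regime, where $\mathrm{dist}(x_k,\{x_n=0\})\sim r_k$. There the rescaled equations retain the singular drift $\frac{\alpha}{|z_n|}$. One must justify two things:
\begin{enumerate}
\item $C^2$ convergence of $\tilde u_k$ across $\{z_n=0\}$, where the equation is only understood in the sense of Theorem \ref{evenextension}.
\item That subtracting $P_k$ does not create a $1/|z_n|$ source term.
\end{enumerate}
For the second point we will rely on Lemma \ref{Conormal}. Specifically, the first-order part of $P_k$ satisfies the conormal condition, and the mixed second-order part satisfies it after tangential differentiation. Together these ensure that $\frac{\alpha}{|x_n|}b_0\cdot DP_k$ is bounded, and in fact of order $|x_n|$ times second derivatives, which is what the scaling requires.
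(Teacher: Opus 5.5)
Your overall architecture matches the paper's. Both use a Simon-type blow-up, split according to whether $x_{k,n}/r_k$ stays bounded, use Lemma \ref{Conormal} to make a boundary-centred Taylor polynomial an exact solution up to the constant $f_k(x_k',0)$, and use Theorem \ref{Liouville} for the limit. Your boundary-centred regime is exactly the paper's Case (b).

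\textbf{The gap.} It lies in your Case 1 when $x_{k,n}\to0$ but $x_{k,n}/r_k\to\infty$. There you subtract the Taylor polynomial $P_k$ of $u_k$ at the interior point $x_k$. Lemma \ref{Conormal} says nothing about $b_0\cdot DP_k$, so $L_0P_k$ is not constant. Using $L_0u_k(x_k)=f_k(x_k)$ one finds, at $x=x_k+r_kz$,
\[
L_0P_k(x)-f_k(x_k)=\frac{r_k}{x_n}\Big[\big(a^{ij}_0\partial_{ij}u_k(x_k)-f_k(x_k)\big)z_n+\alpha\, b^i_0\,\partial_{ij}u_k(x_k)\,z_j\Big],
\]
and in the equation for $\tilde u_k$ this term appears divided by $r_k^{\beta}$. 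So the factor $r_k/x_{k,n}\to0$ you invoke multiplies $r_k^{-\beta}|D^2u_k(x_k)|$, over which you have no control. Your normalization gives $\|D^2u_k\|_\infty\to0$ only at an unknown rate, while the choice of $x_k,y_k$ forces $\|D^2u_k\|_\infty\ge\tfrac{\delta}{4}r_k^\beta$. Your remark that the drift is scaled by $r_k/\mathrm{dist}$ handles only the drift acting on $\tilde u_k$, not this source generated by the polynomial. Hence the claim that $\tilde f_k\to0$ in this regime is unjustified. Likewise, the assertion in your last paragraph that Lemma \ref{Conormal} makes $\frac{\alpha}{|x_n|}b_0\cdot DP_k$ bounded holds only for $P_k$ centred on $\{x_n=0\}$.

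\textbf{The missing step.} This is the actual content of the paper's Case (a): first subtract the Taylor polynomial $q_k$ of $u_k$ at the projected point $(x_k',0)$.
\begin{itemize}
\item By Lemma \ref{Conormal}, $L_0q_k$ is constant, so this changes neither $[D^2u_k]_{C^{0,\beta}}$ nor $[f_k]_{C^{0,\beta}}$.
\item Afterwards $|D^2u_k(x_k)|\le[D^2u_k]_{C^{0,\beta}}x_{k,n}^\beta$ and $|Du_k(x_k)|\le[D^2u_k]_{C^{0,\beta}}x_{k,n}^{1+\beta}$.
\item Consequently the error term is $O\big(R\,(r_k/x_{k,n})^{1-\beta}\big)\to0$ on $B_R$.
\end{itemize}
Equivalently, the bracket above vanishes identically when $D^2u_k(x_k)$ and $f_k(x_k)$ are replaced by their values at $(x_k',0)$, by the conormal identities, so only an $O(x_{k,n}^\beta)$ deviation survives.

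\textbf{Other remarks.} The boundary-centred regime you single out as the main obstacle is handled exactly as you describe. Your item (1) there is automatic: since the lemma is an a priori estimate for $C^{2,\beta}$ solutions, Arzel\`a--Ascoli gives $C^2_{loc}$ convergence across $\{z_n=0\}$ directly. A minor point: $r_k\to0$ requires the interpolation inequality, since your normalization does not control $\|D^2u_k\|_\infty$ directly.
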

\begin{proof}
We prove that 
\begin{equation} \label{deltaschauder}
[D^2u]_{C^{0,\beta}(B_{1/2})} \leq \delta[D^2u]_{C^{0,\beta}(B_{1})} + C([f]_{C^{0,\beta}(B_1)} + \|u\|_{L^\infty(B_1)}).
\end{equation}
Suppose for contradiction that there exist sequences $u_k\in C^{2,\beta}(B_1)$, $f_k \in C^{0,\beta}(B_1)$ such that $\mathcal{L}_0 u_k = f_k$ and for a fixed $\delta_0 > 0$,
\begin{equation} \label{contras}
[D^2u_k]_{C^{0,\beta}(B_{1/2})} > \delta_0[D^2u_k]_{C^{0,\beta}(B_{1})} + k(\|D^2u_k\|_{L^\infty(B_1)}+ [f_k]_{C^{0,\beta}(B_1)}).
\end{equation}
Choose $x_k, y_k \in B_{1/2}$ such that 
\begin{equation}
\frac{|D^2u_k(x_k)-D^2u_k(y_k)|}{|x_k-y_k|^\beta} \geq \frac{1}{2}[D^2u]_{C^{0,\beta}(B_{1/2})},
\end{equation}
and let $\rho_k = |x_k - y_k|$. We first observe that $\rho_k \rightarrow 0$. Since
\begin{align}
\frac{1}{2}[D^2u_k]_{C^{0,\beta}(B_{1/2})} \leq \frac{|D^2u_k(x_k)-D^2u_k(y_k)|}{\rho_k^\beta} \leq \frac{2\|D^2 u_k\|_{L^\infty(B_1)}}{\rho_k^\beta} \leq \frac{2[D^2u]_{C^{0,\beta}(B_{1/2})}}{k\rho_k^\beta},
\end{align}
we have
\begin{equation}
\rho_k \leq Ck^{-1/\beta} \rightarrow 0 \text{ as } k \rightarrow \infty.
\end{equation}
As we rescale and blow up, we distinguish the following two cases.
\begin{align}
(a). &\sup_k x_{k,n}/\rho_k = \infty,\\
(b). &\sup_k x_{k,n}/\rho_k < \infty.
\end{align}
In case (a), we assume without loss of generality $\lim_k x_{k,n}/\rho_k = + \infty$. We may also assume without loss of generality by subtracting a polynomial that
\begin{equation}\label{ukxk'0}
u_k(x_k',0) = |\nabla u_k(x_k',0)| = |D^2 u_k(x_k',0)| = 0.
\end{equation}
Indeed, consider
\begin{equation}
q_k(z) = u_k(x_k',0) + \sum_{i=1}^{n} \D_i u_k(x_k',0) (z_i-x_i) + \frac{1}{2}\sum_{i,j=1}^{n} \D_{ij}u_k(x_k',0)(z_i-x_i)(z_j-x_j).
\end{equation}
Then $L_0 q_k = const.$ by Lemma \ref{Conormal}.

Define 
\begin{align}
&\tilde{u}_k := \frac{u_k(x_k + \rho_kx)-p_k(x)}{\rho_k^{2+\beta}[D^2u_k]_{C^{0,\beta}(B_{1})}},\\
&\tilde{f}_k := \frac{f_k(x_k + \rho_kx)-f_k(x_k)}{\rho_k^{\beta}[D^2u_k]_{C^{0,\beta}(B_{1})}}
\end{align}
where $p_k$ is the quadratic polynomial 
\begin{equation}
p_k(z) = u_k(x_k) + \sum_{i=1}^{n}\rho_k \D_i u_k(x_k) z_i + \frac{1}{2}\sum_{i,j=1}^{n}\rho_k^2 \D_{ij}u_k(x_k)z_iz_j.
\end{equation}
Calculation yields that $\tilde{u}_k$ satisfies in $B_{1/2\rho_k}$,
\begin{align}
a^{ij}_0\D_{ij}\tilde{u}_k(x) + \frac{\alpha b^i_0}{x_{k,n}/\rho_k +x_n}\D_i\tilde{u}_k = \tilde{f}_k(x) + R_k
\end{align}
where $R$ is the error term given by
\begin{align}
R_k = \frac{1}{\rho_k^\beta[D^2u_k]_{C^{0,\beta}(B_{1})}}\left(\frac{x_n \alpha b^i_0}{x_{k,n}(x_{k,n}/\rho_k +x_n)}\D_iu_k(x_k) -\frac{\alpha b^i_0}{x_{k,n}/\rho_k +x_n} \D_{ij}u_k(x_k)x_j \right).
\end{align}
Here we used that $x_{k,n}/\rho_k +x_n > 0 $ and dropped the absolute values. Denoting $\xi_k = \frac{x_k-y_k}{\rho_k}$, we note that 
\begin{equation}
[D^2\tilde{u}_k]_{C^{0,\beta}(B_{1/2\rho_k})} \leq 1,\quad  \left|D^2 \tilde{u}_k\left(\xi_k\right)\right| > \delta_0/2.
\end{equation}
Since $\tilde{u}_k$ are uniformly bounded in compact subsets, and bounded in $C^{2,\beta}$ norm, by Ascoli-Arzela that the sequence $\tilde{u}_k$ converges, up to a subsequence to a $C^{2,\beta}$ function $\tilde{u}$ on compact subsets of $\RR^n$. Moreover, $\xi_k\rightarrow \xi$ up to a subsequence. It follows that
\begin{align}
&\tilde{u}(0) = |D\tilde{u}(0)| = |D^2 \tilde{u}(0)| = 0, \\&[D^2\tilde{u}]_{C^{0,\beta}(\RR^n)} \leq 1,\quad \left|D^2 \tilde{u}\left(\xi\right)\right| > \delta_0/2.
\end{align}
Now for any $R \geq 1$, we have
\begin{align}
\|\tilde{f}_k\|_{L^\infty(B_R)} &= \sup_{x \in B_R}\frac{|f_k(x_k + \rho_k x)- f_k(x_k)|}{\rho_k^\beta[D^2u_k]_{C^{0,\beta}(B_1)}} \leq \frac{\rho_k^\beta R^\beta[f_k]_{C^{0,\beta}(B_1)}}{\rho_k^\beta[D^2u_k]_{C^{0,\beta}(B_1)}}
\\&\leq \frac{R^\beta [D^2u_k]_{C^{0,\beta}(B_{1/2})}}{k[D^2u_k]_{C^{0,\beta}(B_1)}} \leq \frac{R^\beta}{k}\rightarrow 0.
\end{align}
To estimate $R_k$, we first note that by \eqref{ukxk'0}
\begin{align}
|D^2 u_k(x_k)| \leq [D^2u_k]_{C^{0,\beta}(B_1)} x_{k,n}^\beta, \quad |D u_k(x_k)| \leq [D^2u_k]_{C^{0,\beta}(B_1)} x_{k,n}^{1+\beta}.
\end{align}
Moreover, we note that for $x_{k,n}/\rho_k > R$, 
\begin{align}
\frac{x_{k,n}}{\rho_k} + x_n \geq \frac{x_{k,n}}{\rho_k}  - R \geq \frac{1}{2}\frac{x_{k,n}}{\rho_k}.
\end{align}
Therefore, we have
\begin{align}
|R_k| &\leq CR\frac{1}{\rho_k^\beta [D^2u_k]_{C^{0,\beta}(B_1)}} \left(\frac{2\rho_k}{x_{k,n}}[D^2u_k]_{C^{0,\beta}(B_1)} x_{k,n}^{\beta} + \frac{2\rho_k}{x_{k,n}}[D^2u_k]_{C^{0,\beta}(B_1)} x_{k,n}^{\beta} \right) \\&\leq CR\left(\frac{x_{k,n}}{\rho_k}\right)^{\beta-1} \rightarrow 0.
\end{align}
We also have
\begin{align}
\left|\frac{\alpha b^i_0}{x_{k,n}/\rho_k +x_n}\D_i\tilde{u}_k \right| \leq C \frac{\rho_k}{2x_{k,n}}R^{1+\beta} \rightarrow 0
\end{align}
Thus taking $k \rightarrow \infty$, we have
\begin{equation}
a^{ij}_0\D_{ij}\tilde{u} = 0 \text{ in } \RR^n.
\end{equation}
Since $[D^2\tilde{u}]_{C^{0,\beta}(\RR^n)} \leq 1$, $|\tilde{u}(x)| \leq C|x|^{2+\beta}$, and thus by the usual Liouville theorem for uniformly elliptic equations, $\tilde{u}$ is a polynomial of degree $2$ and $D^2\tilde{u}$ is a constant. Since $D^2\tilde{u}(0) = 0$, $D^2\tilde{u} = 0$, contradicting $|D^2\tilde{u}(\xi)| > \delta_0/2$. This proves \eqref{deltaschauder} in Case 1.

Now suppose Case 2 holds.
Define 
\begin{align}
&\tilde{u}_k := \frac{u_k((x_k',0) + \rho_kx)-p_k(x)}{\rho_k^{2+\beta}[D^2u_k]_{C^{0,\beta}(B_{1})}},\\
&\tilde{f}_k := \frac{f_k((x_k',0) + \rho_kx)-f_k(x_k',0)}{\rho_k^{\beta}[D^2u_k]_{C^{0,\beta}(B_{1})}}
\end{align}
where $p_k$ is the quadratic polynomial 
\begin{equation}
p_k(z) = u_k(x_k',0) + \sum_{i=1}^{n}\rho_k \D_i u_k(x_k',0) z_i + \frac{1}{2}\sum_{i,j=1}^{n}\rho_k^2 \D_{ij}u_k(x_k',0)z_iz_j.
\end{equation}
We have in $B_{1/2\rho_k}$, 
\begin{equation}
a^{ij}_0\D_{ij} \tilde{u}_k + \frac{\alpha b^l_0}{|x_n|}\D_{l} \tilde{u}_k + \frac{\alpha}{x_n}\D_{n} \tilde{u}_k = \tilde{f}_k.
\end{equation}
The rest of the argument is the same as in Case 1 where we replace the Liouville theorem by Theorem \ref{Liouville}. Hence, we have a contradiction.

Now \eqref{deltaschauder} follows from the interpolation inequality
\begin{equation}
\|D^2u\|_{L^\infty(B_1)} \leq \eta[D^2u]_{C^{0,\beta}(B_1)} + C_\eta\|u\|_{L^\infty(B_1)} \text{ for any } \eta > 0.
\end{equation}
Finally, the conclusion of the theorem follows from the following iteration lemma.
\end{proof}

\begin{lem} \label{iterationlemma} \cite{Simon}
Let $S$ be a monotone subadditive function on the convex subsets of a ball $B = B_{\rho_0}(y_0)$ and let $\theta_0 \in (0,1/2]$, $k > 0$ be given constants. Then there exists $\eps = \eps(\theta_0,k)\in (0,1)$ such that if $E\geq 0$ is a constant and 
\begin{equation}
\sigma^k S(B_{\theta_0\sigma}(y_0)) \leq \eps \sigma^k S(B_{\sigma}(y_0)) + E
\end{equation}
for all balls $B_\sigma(y) \subset B$, then for any ball $B_\rho \subset B$, we have
\begin{equation}
\rho^k S(B_{\theta \rho}(y)) \leq CE
\end{equation}
for all $\theta \in (0,1)$ , where $C = C(n,\theta_0,\theta,k)$.
\end{lem}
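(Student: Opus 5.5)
The plan is a covering argument combined with an absorption step, following Simon. I would not work with $S(B_{\theta\rho}(y))$ for general $\theta$ directly. Instead I would introduce the normalized quantity
\begin{equation}
Q := \sup_{B_\rho(y)\subset B} \rho^k S(B_{\rho/2}(y)).
\end{equation}
By monotonicity, $Q \le \rho_0^k S(B)$. So $Q$ is finite as long as $S(B)<\infty$; this is implicitly assumed, since otherwise the statement is void. The goal is to prove $Q\le CE$ by showing $Q \le \tfrac12 Q + CE$ and then absorbing. After that, the statement for general $\theta$ follows from one more covering.

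\textbf{Step 1 (one-step estimate).} Fix $B_\rho(y)\subset B$ and set $\sigma=\rho/4$.
\begin{itemize}
\item Cover $B_{\rho/2}(y)$ by $N=N(n,\theta_0)$ balls $B_{\theta_0\sigma}(z_m)$ with centers $z_m\in B_{\rho/2}(y)$. Then $B_\sigma(z_m)\subset B_\rho(y)\subset B$.
\item Subadditivity and the hypothesis applied to each $B_\sigma(z_m)$ give
\begin{equation}
\rho^k S(B_{\rho/2}(y)) \le 4^k\sum_{m}\sigma^k S(B_{\theta_0\sigma}(z_m)) \le 4^k\sum_m \big(\eps\,\sigma^k S(B_\sigma(z_m)) + E\big).
\end{equation}
\item To control $\sigma^k S(B_\sigma(z_m))$ by $Q$, cover $B_\sigma(z_m)$ by $M=M(n)$ balls $B_{\sigma/2}(w)$ with $w\in B_\sigma(z_m)$. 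The containment check is $|w-y|+\sigma\le \rho/2+2\sigma=\rho$, so each $B_\sigma(w)\subset B_\rho(y)\subset B$.
\item Hence $\sigma^k S(B_{\sigma/2}(w))\le Q$, and subadditivity gives $\sigma^kS(B_\sigma(z_m))\le MQ$.
\end{itemize}

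\textbf{Step 2 (absorption).} Combining Step 1 and taking the supremum over admissible $B_\rho(y)$ gives
\begin{equation}
Q\le 4^kNM\eps\, Q + 4^kN E.
\end{equation}
Choose $\eps=\eps(n,\theta_0,k)$ so that $4^kNM\eps\le 1/2$. Since $Q<\infty$, this yields $Q\le 2\cdot4^kNE$.

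\textbf{Step 3 (general $\theta$).} For $\theta\in(0,1)$ and $B_\rho(y)\subset B$, cover $B_{\theta\rho}(y)$ by $N'(n,\theta)$ balls $B_{(1-\theta)\rho/2}(z)$ with $z\in B_{\theta\rho}(y)$. Each satisfies $B_{(1-\theta)\rho}(z)\subset B_\rho(y)$, so the definition of $Q$ applies with radius $(1-\theta)\rho$. Subadditivity then gives
\begin{equation}
\rho^kS(B_{\theta\rho}(y))\le N'(1-\theta)^{-k}Q\le C(n,\theta_0,\theta,k)E.
\end{equation}

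\textbf{Main obstacle.} The only delicate point is the bookkeeping of radii in Step 1. Every ball on which the hypothesis or the definition of $Q$ is used must stay inside $B$, and the covering numbers must depend only on $n$ and $\theta_0$. This independence is what makes the choice of $\eps$ legitimate. A secondary point is that absorption requires $Q<\infty$; this is where monotonicity of $S$ (together with finiteness of $S(B)$) is used.
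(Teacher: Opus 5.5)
Your proof is correct and is essentially the covering-and-absorption argument from Simon's paper, which the paper cites without reproducing. You set $Q=\sup_{B_\rho(y)\subset B}\rho^k S(B_{\rho/2}(y))$, cover at scale $\theta_0\rho/4$, apply the hypothesis on the balls $B_{\rho/4}(z_m)\subset B_\rho(y)$ (correctly reading the $y_0$ in the paper's hypothesis as the variable center $y$), absorb using $Q\le \rho_0^k S(B)<\infty$, and pass to general $\theta$ by one further covering. Two harmless remarks: the second covering in your Step 1 can be skipped, since $B_{\rho/2}(z_m)\subset B_\rho(y)$ gives $\sigma^k S(B_\sigma(z_m))\le 2^{-k}Q$ directly; and your $\eps$ depends on $n$ through the covering numbers, a dependence the paper's notation $\eps(\theta_0,k)$ suppresses.
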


\section{Improvement of Flatness} \label{Sec:Improvement}

This section establishes the following Theorem.
\begin{thm}
Let $v$ be a viscosity solution to \eqref{vpde}, then 
$v(\cdot,0) \in C^{1,\sigma}(B_1')$, in particular, $v$ is singlevalued.
\end{thm}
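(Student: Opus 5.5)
We will follow the De Silva improvement-of-flatness scheme: a compactness argument built on Corollary \ref{harnackcor}, the even extension of Theorem \ref{evenextension}, and the Schauder estimate of Theorem \ref{constantSchauder}.

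The key statement to prove is a one-step improvement. There are universal $\eps_1$, $\rho\in(0,1/20)$ and $\sigma\in(0,1)$ with the following property. Suppose $v$ solves \eqref{vpde} in $B_1^+$ in the sense of Definition \ref{multivisdefn2} and
\begin{equation}
x_n - \eps \le v(x) \le x_n + \eps \text{ in } B_1^+, \qquad \eps\le \eps_1 .
\end{equation}
Then there is a unit vector $\nu$ with $|\nu-e_n|\le C\eps$ such that, after the affine change of variables adapted to $\nu$ (tilting $x_n$ by a linear function $\ell(x')$ with $|\ell|\le C\eps$), we have
\begin{equation}
x_n + \ell(x') - \rho^{1+\sigma}\eps \le v(x) \le x_n + \ell(x') + \rho^{1+\sigma}\eps \text{ in } B_\rho^+ .
\end{equation}
The transformed equation is invariant under tilting. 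Indeed, replacing $v$ by $v-\ell(x')$ corresponds to rotating/shearing $u$ in the $x'$ variables, and $F$ composed with a linear change of variables is still uniformly elliptic with the same structure. So the step can be iterated at all scales $\rho^k$. The rescaling $v_\rho(x)=v(\rho x)/\rho$ preserves \eqref{vpde}, because the $y_n$ variable enters only through $h$. This scaling is not exact, since $h(\rho y)\ne\rho^{\kappa}h(y)$ exactly. However, Proposition \ref{hconstant} shows that the rescaled coefficients $h''x_n/h'$ converge as $\rho\to0$, so we track a family of equations with uniformly controlled structure.

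The step is proved by contradiction and compactness. Take $\eps_k\to0$ and solutions $v_k$ for which the conclusion fails, and set $\tilde v_k=(v_k-x_n)/\eps_k$. Corollary \ref{harnackcor}, applied at every boundary point and combined with interior Harnack away from $\{x_n=0\}$, gives a uniform Hölder modulus for $\tilde v_k$ down to scale $\eps_k/\eps_0$. The multivalued spread of $v_k$ near $x_n=0$ has size $\le\eps_k\cdot o(1)$ after dividing. So by Ascoli–Arzelà, $\tilde v_k$ converges uniformly on $\overline{B_{1/2}^+}$ to a single-valued Hölder function $\tilde v$.

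We then show that $\tilde v$ is a viscosity solution of the frozen linearized equation $\mathcal L_0\tilde v=0$ in $\{x_n>0\}$. Take a test polynomial touching $\tilde v$ and perturb it to $x_n+\eps_k\phi$, which touches $v_k$. Expanding $F(G(D^2(x_n+\eps\phi),D(x_n+\eps\phi)))-h^\gamma$ to first order, exactly as in the computation \eqref{Gvt}–\eqref{ftcfg}, gives the frozen operator \eqref{frozen} after dividing by $h'$. The coefficient $x_nh''/h'$ is replaced by its limit from Proposition \ref{hconstant}; for fixed $x_n>0$ this is legitimate after rescaling. By Theorem \ref{evenextension} (here $\gamma>-1/3$ is used), the even extension of $\tilde v$ solves $\mathcal L_0\tilde v=0$ across $\{x_n=0\}$. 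Theorem \ref{constantSchauder} with $f=0$ then gives $\tilde v\in C^{2,\beta}(B_{1/4})$. Lemma \ref{Conormal} gives $\partial_n\tilde v(x',0)=0$, so near $0$,
\begin{equation}
|\tilde v(x)-\tilde v(0)-\nabla_{x'}\tilde v(0)\cdot x'|\le C|x|^{2}.
\end{equation}
Choosing $\rho$ small with $C\rho^2\le\frac12\rho^{1+\sigma}$ and $\ell=\eps_k\nabla_{x'}\tilde v(0)\cdot x'$ plus the constant contradicts the assumed failure for large $k$.

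Iterating the step yields $v(x',0)$ trapped at each scale $\rho^m$ between two parallel tilted planes at distance $\rho^{m(1+\sigma)}\eps$. This holds around every boundary point. Hence the upper and lower envelopes of $v(\cdot,0)$ coincide, so $v$ is single-valued there, and standard Campanato arguments give $v(\cdot,0)\in C^{1,\sigma}(B_1')$.

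The main obstacle is the limiting step at the degenerate boundary. We must check that the viscosity inequalities pass to the limit near $\{x_n=0\}$ for multivalued $v_k$, and that the limit attains the boundary condition encoded by Theorem \ref{evenextension}, rather than picking up the $x_n^{1-\alpha}$ branch. The first thing to try is to reuse the barrier $\psi(x')w_\eta(x_n)$ from the Harnack lemma: it is a sub- or supersolution of the linearized problem and would rule out boundary touching. Boundedness of $\tilde v$ already excludes $x_n^{1-\alpha}$ since $\alpha>1$.
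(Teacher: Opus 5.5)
Your outline follows the paper's own route: compactness from Corollary \ref{harnackcor}, passage to the linearized equation, even extension by Theorem \ref{evenextension}, Theorem \ref{constantSchauder}, vanishing normal derivative, contradiction, iteration. It breaks exactly where the paper's argument is too quick. You assume two things: that the limit satisfies $\D_n\tilde v(x',0)=0$ (via Lemma \ref{Conormal}), and that $v-\ell(x')$ again solves \eqref{vpde} with $x_n$ as reference solution. Neither holds in general.

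The first-order obstruction is $b_0'=(b_0^1,\dots,b_0^{n-1})$, where $b_0^k=a_0^{kn}=F_{kn}/F_{nn}$ at $D^2h$. This is nonzero for typical $F$ without rotational symmetry.

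\begin{itemize}
\item \emph{$x_n$ is no longer a reference solution.} $v-\ell$ solves $F(G(D^2\tilde v,D\tilde v+(\nabla'\ell,0)))=h^\gamma$. But $G(0,(\nabla'\ell,1))=h''\,\xi\otimes\xi$ with $\xi=(-\nabla'\ell,1)$, so $x_n$ does not solve this equation.
\item \emph{An explicit example.} Take $\gamma=0$ and $F(M)=\mathrm{tr}(AM)$. The exact solution $u=((x\cdot\nu)_+)^2/(2\nu^TA\nu)$, with $\nu$ parallel to $(-\eps q',1)$ and $|q'|$ small, has the linear hodograph transform
\begin{equation}
v=\bigl(1-\eps\, b_0'\cdot q'+O(\eps^2)\bigr)x_n+\eps\, q'\cdot x',\qquad b_0'=\left(A_{kn}/A_{nn}\right)_{k<n}.
\end{equation}
Thus $(v-x_n)/\eps\to q'\cdot x'-(b_0'\cdot q')x_n$. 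This limit solves the linearized equation in $\{x_n>0\}$ but has $\D_n\neq0$ on $\{x_n=0\}$.
\item \emph{The paper's notion of solution is too weak.} The even extension of this limit has a kink. It still solves the extended equation off $\{x_n=0\}$ and passes the vacuous test on $\{x_n=0\}$ in Theorem \ref{evenextension}. So Theorem \ref{constantSchauder} cannot hold for that notion of solution.
\item \emph{The iterated statement fails.} This $v$ cannot lie within $\rho^{k(1+\sigma)}\eps$ of any $x_n+\ell_k(x')$ on $B^+_{\rho^k}$ once $\rho^{k\sigma}$ is small compared with $|b_0'\cdot q'|$. Indeed, $(v-x_n-\ell_k)(0,x_n)$ oscillates by about $\eps|b_0'\cdot q'|\rho^k$ on $(0,\rho^k)$.
\end{itemize}
The paper's ``$b=0$ since $w$ is even'' fails for the same reason.

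What is missing is the correct boundary condition and the correct re-centering.
\begin{itemize}
\item \emph{Boundary condition.} Let $A=DF(D^2h)$, normalized so that $A_{nn}=1$. The first-order term of the linearization is $\frac{\alpha}{x_n}\,(Ae_n)\cdot\nabla w$, so the frozen operator is $x_n^{-\alpha}\mathrm{div}(x_n^{\alpha}A\nabla w)$. One-sided regular solutions satisfy only the conormal condition $b_0\cdot\nabla w=0$ with $b_0=(b_0',1)=Ae_n$. That is exactly what the argument of Lemma \ref{Conormal} gives from one side. The separate identities $\D_nw=0$ and $b_0'\cdot\nabla'w=0$ are artifacts of the $|x_n|$ convention.
\item \emph{Reflection.} Even reflection does not preserve the equation when $a_0^{kn}\neq0$, since $w_{kn}$ changes sign. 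First apply the shear $x'\mapsto x'-b_0'x_n$. It fixes $\{x_n=0\}$ and the weight, and makes the conormal vertical. Then reflect and use the Schauder and Liouville theory. The resulting first-order approximation is $a+q'\cdot x'-(b_0'\cdot q')x_n$, which is the linearization of the tilted one-dimensional solution.
\item \emph{Iteration.} Run the iteration on the flatness of $u$. At each step, rotate to the new normal $\nu$ and redo the hodograph transform with the one-dimensional profile $h_\nu$ of $F$ in direction $\nu$. The rotated operators satisfy (H1)--(H3) with the same constants.
\end{itemize}

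A lesser point concerns your claim that the frozen coefficients are ``legitimate after rescaling''. At a fixed scale the limit operator has $x_n$-dependent coefficients, namely $F_{ij}(h''(x_n)e_n\otimes e_n)$ and $x_nh''/h'$. Replacing them by their limits is justified only along a sequence of scales tending to zero. Otherwise you need a Schauder estimate with coefficients that are merely continuous up to $\{x_n=0\}$.
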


The above theorem follows from a standard iteration of the following improvement of flatness lemma.

\begin{lem}\label{improveflat}
Let $v$ be a viscosity solution to \eqref{vpde} and satisfies 
\begin{equation}
x_n - \eps \leq v \leq x_n + \eps \text{ in } B_1^+(0),
\end{equation}
then there exists $0<r \leq r_0$ for a universal $r_0$ and $0 < \eps \leq \eps_0$ for some $\eps_0$ depending on $r$, and $q' \in \RR^{n-1}$ such that,
\begin{equation}
x_n - \frac{r\eps}{2} \leq v - \eps q'\cdot x' \leq x_n  + \frac{r\eps}{2} \text{ in }B_{r}^+.
\end{equation}

\end{lem}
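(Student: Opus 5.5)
We argue by compactness, in the style of De Silva's improvement of flatness. Fix $r$ (to be chosen small and universal) and suppose the lemma fails for this $r$. Then there are viscosity solutions $v_k$ of \eqref{vpde} (in the sense of Definition \ref{multivisdefn2}) and $\eps_k\to 0$ with $x_n-\eps_k\le v_k\le x_n+\eps_k$ in $B_1^+$, but no $q'$ gives the conclusion in $B_r^+$. Consider the normalized functions
\begin{equation}
\tilde v_k(x)=\frac{v_k(x)-x_n}{\eps_k},
\end{equation}
which take values in $[-1,1]$ and may be multivalued near $\{x_n=0\}$. By Theorem \ref{Harnack}, applied at all boundary and interior points, and by Corollary \ref{harnackcor}, the oscillation of $\tilde v_k$ is controlled by $C|x-y|^{\alpha_1}$ whenever $|x-y|\ge \eps_k/\eps_0$. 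Away from $\{x_n=0\}$ we also have the uniformly elliptic interior estimate. An Ascoli--Arzel\`a type argument for sets with this uniform oscillation control then shows that the graphs of $\tilde v_k$ converge in Hausdorff distance, along a subsequence, to the graph of a single-valued H\"older continuous function $\tilde v$ on $\overline{B_{1/2}^+}$. The multivaluedness collapses in the limit because the oscillation at scale $\eps_k$ vanishes.

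The next step is to identify the limit equation. We claim that $\tilde v$ is a viscosity solution of the frozen linearized equation $L_0\tilde v=0$ of \eqref{frozen} in $B_{1/2}^+$, with coefficients obtained from \eqref{coef1}--\eqref{coef2} at $v=x_n$ and multiplied by $-h'^{-1}$. To check this, take a test function $\phi$ touching $\tilde v$ strictly from above at $x_0$ with $x_{0,n}>0$. Then $x_n+\eps_k\phi$, shifted by a small constant, touches $v_k$ from above at nearby points $x_k\to x_0$. Expanding $F(G(D^2(x_n+\eps_k\phi),D(x_n+\eps_k\phi)))-h^\gamma$ to first order in $\eps_k$, exactly as in \eqref{ftcfg}, gives $-h'\,\eps_k L\phi+O(\eps_k^2)$ with coefficients converging to those of $L_0$. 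Dividing by $\eps_k$ and letting $k\to\infty$ yields $L_0\phi(x_0)\ge 0$ (the sign being fixed by the order-reversing convention). Since $\tilde v$ is continuous up to $\{x_n=0\}$, Theorem \ref{evenextension} shows that its even reflection solves $\mathcal L_0\tilde v=0$ in $B_{1/2}$.

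We then use regularity of the limit. Theorem \ref{constantSchauder} with $f=0$, or Lemma \ref{gradientbound} together with the structure from the proof of Theorem \ref{Liouville}, gives $\|\tilde v\|_{C^{2,\beta}(B_{1/4})}\le C$ universally. By Lemma \ref{Conormal}, $\partial_n\tilde v(x',0)=0$ and $\sum_{l<n} b^l_0\partial_l\tilde v(x',0)=0$. Set $q'=\nabla_{x'}\tilde v(0)$ and $c=\tilde v(0)$. The condition $\partial_n \tilde v(0)=0$ removes the first-order normal term, so
\begin{equation}
|\tilde v(x)-c-q'\cdot x'|\le C|x|^{2}\le Cr^2\quad\text{in } B_r^+ .
\end{equation}
Choose $r$ with $Cr^2\le r/8$. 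For $k$ large the Hausdorff convergence gives $|\tilde v_k-c-q'\cdot x'|\le r/4$ in $B_r^+$, which means $x_n+\eps_k c-\tfrac{r\eps_k}{4}\le v_k-\eps_k q'\cdot x'\le x_n+\eps_k c+\tfrac{r\eps_k}{4}$. To absorb the constant $\eps_k c$ we either translate $v$ in $x_n$, which is allowed since the normalization is only up to the tangent plane, or we normalize beforehand by subtracting $c$, using the fact that $x_n+\text{const}$ is still a solution. This contradicts the choice of $v_k$.

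The main obstacle is the compactness step near $\{x_n=0\}$. We must show that the limit of possibly multivalued $\tilde v_k$ is single-valued and continuous up to the boundary, and that the viscosity inequalities really pass to the limit at points whose normal coordinate is comparable to $\eps_k$. I would handle this using the uniform boundary H\"older modulus from Corollary \ref{harnackcor}, which holds down to scale $\eps_k/\eps_0$, and by testing only at interior points. The boundary behaviour is then supplied by Theorem \ref{evenextension} rather than by passing the equation to the limit on $\{x_n=0\}$.
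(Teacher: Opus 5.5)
Your proposal is correct and follows the paper's own proof step for step. Both argue by contradiction with the normalization $(v_k-x_n)/\eps_k$, get compactness from Corollary \ref{harnackcor}, identify the linearized limit equation by touching at interior points, reflect evenly via Theorem \ref{evenextension}, obtain $C^{2,\beta}$ regularity from Theorem \ref{constantSchauder}, and use the vanishing normal derivative at the origin to drop the $x_n$ term.

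Your explicit handling of the constant $c=\tilde v(0)$ is more careful than the paper, which simply asserts $w_*(0)=0$. That assertion needs an unstated normalization, namely that the origin is a free boundary point ($0\in v(0)$, which forces $c=0$ in the limit), or else a constant shift must be allowed in the conclusion. Indeed, $v=x_n+\eps/2$ satisfies the hypotheses but not the literal conclusion. Note also that ``subtracting $c$ beforehand'' is not available, since $c$ is only determined after passing to the limit; the normalization $0\in v(0)$ is the clean fix.
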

\begin{proof}
Fix $r \leq r_0$. Suppose for contradiction that there exist a sequence $\eps_k \rightarrow 0$, and a sequence $v_k$ of solutions to \eqref{vpde} in $B_1$ such that the conclusion of the Theorem is not true.

Consider
\begin{equation}
{w}_k= \frac{{v}_k(x) - x_n}{\eps_k}  \text{ in } B_1.
\end{equation}
By assumption,
\begin{equation}
|{w}_k| \leq 1.
\end{equation}

Define 
\begin{equation}
\mathcal{F}_k(M,P):= \eps_k^{-1} [{F}({G}((D^2 M, DP)) - {F}({G}(D^2x_n,Dx_n))]
\end{equation}

It follows from \eqref{vpde} that,
\begin{equation}
\mathcal{F}_k(D^2{w}_k,D{w}_k) = D{{F}}({G}(D^2x_n,Dx_n)):[D^2 {w}_k,D{w}_k ] + o(\eps_k D^2 {w}_k, \eps_k D{w}_k).
\end{equation}

Moreover, Corollary \ref{harnackcor} gives
\begin{equation}
|{w}_k(x)-{w}_k(y)|\leq C|x-y|^\sigma \text{ in }B_{1/2}
\end{equation}
for some universal $C$ and $|x-y| \geq \frac{\eps_k}{\eps_0}$.

By Ascoli-Arzela, as $\eps_k \rightarrow 0$, we can extract a convergent subsequence that converges to a H\"older continuous function in $B_{1/2}$, and 
\begin{align}
&{F}_{k} \rightarrow {F}_{*},\ D{F}_{k} \rightarrow D{F}_{*} \text{ uniformly, }\\
&{w}_k \rightarrow {w}_*, \text{ uniformly in compact subsets of $B_{1/2}^+$, }
\end{align}

We first show that 
\begin{equation}
L_*{w}_*:= D{{F}}({G}(D^2x_n,Dx_n)):[D^2 {w}_*,D{w}_* ] = 0 \text{ in } B_{1/2}^+.
\end{equation}

Assume by contradiction that we touch $w_*$ from above at $x_*$ by a smooth function $\phi$, and 
\begin{equation}
D{{F}}({G}(D^2x_n,Dx_n)):[D^2 \phi,D\phi](x_*) > c > 0.
\end{equation}
Then $\phi + const.$ touches $w_k$ from above at $x_k$ and $x_k \rightarrow x_*$. We have
\begin{align}
0 &\geq D{{F}}({G}(D^2x_n,Dx_n)):[D^2 \phi,D\phi ](x_k) + o(\eps_k D^2 \phi(x_k), \eps_k D\phi(x_k))\\
& \geq D{{F}}({G}(D^2x_n,Dx_n)):[D^2 \phi,D\phi ](x_k) > c/2 \text{ as } k \rightarrow \infty,\end{align}
which is a contradiction.

Extend ${w}_*$ evenly across $x_n = 0$ and denote the extended function still by ${w}_*$. It follows from Theorem \ref{evenextension} that ${w}_*$ is a viscosity solution to 
\begin{equation} \label{limitpde}
L_*{w}_*:= D{{F}}({G}(D^2x_n,Dx_n)):[D^2 {w}_*,D{w}_* ] = 0 \text{ in } B_{1/2}.
\end{equation}

By Theorem \ref{constantSchauder}, ${w}_*\in C^2(B_{1/2})$ and there exists
\begin{equation}
P(x',x_n) = a + bx_n + q' \cdot x' \text{ in }B_{1/2}.
\end{equation}
such that
\begin{align}
|{w_*}(x',x_n) - P| \leq C|x|^{2} \text{ in }B_{1/2}.
\end{align}
We note that $a = w_*(0) = 0$, and $b = \tilde{w_*}_n(0) = 0$ since $w$ is even. For $k$ large, we have
\begin{equation}
\left| \frac{v_k-x_n}{\eps_k}- q' \cdot x'\right| \leq Cr^{2} \text{ in } B_{r}.
\end{equation}
which can be rewritten as
\begin{equation}
x_n - C\eps_k r^{2} \leq v_k- {\eps_k} q' \cdot x' \leq x_n + C\eps_k r^{2} \text{ in } B_{r}.
\end{equation}
which is a contradiction for choosing $r_0$ small, and the Lemma follows.
\end{proof}

\section{Higher Boundary Regularity} \label{Sec:Higher}

This section first establishes that $v\in C^{2,\beta}$. It follows that the coefficients as shown in $\eqref{coef1},\eqref{coef2}$ are continuous where we can then apply a variable coefficient Schauder theory iteratively.

\begin{thm}\label{vc2alpha}
Let $v$ be a solution to \eqref{vpde} in $B_1(0)^+$, then $v \in C^{2,\beta}(B^+_{1/2}(0))$. 
\end{thm}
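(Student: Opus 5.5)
We prove Theorem \ref{vc2alpha} by perturbing from the frozen-coefficient Schauder theory of Section \ref{Sec:Liouville}, working with the tangential derivatives $w=v_e$, $1\le e\le n-1$, and then recovering the normal derivatives from the equation itself.

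The starting point is the previous section. Iterating Lemma \ref{improveflat} shows that at every boundary point $(x_0',0)$, after subtracting a linear function $x_n+\eps q'(x_0')\cdot(x'-x_0')$, the function $v$ is flat at every scale $r$, with flatness $C\eps r^{1+\sigma}$. Away from $\{x_n=0\}$ we have the rescaled Savin estimates (Theorem \ref{Savin} and Corollary \ref{ukestimate}). Combining the two gives $v\in C^{1,\sigma}(\overline{B^+_{3/4}})$, and $Dv$ is $C^\sigma$-close to $e_n$.

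By \eqref{wpde1}, the tangential derivative $w=v_e$ solves a linear equation of the form \eqref{nondivL}. Its coefficients $a^{ij}(x)$ and $b^i(x)$ are built from $Dv$, from $x_nD^2v$, and from the ratios $h''x_n/h'$ and $h'/x_n$.

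\textbf{Main obstacle.} The coefficients in \eqref{coef2} contain $h'v_{kn}$ and similar second-derivative terms. Once we divide by $h'$ and multiply by $x_n$, these become $x_nD^2v$. We do not yet know that $x_nD^2v$ is H\"older up to the boundary, so at this stage we only know the coefficients are bounded.

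To handle this, we first show that $x_nD^2v$ is $C^\sigma$-small near the boundary. This is a rescaling argument: on the ball $B_{x_n/2}(x)$, apply interior Schauder estimates to the rescaled flat solution, using the $C^{1,\sigma}$ flatness at scale $x_n$. This yields
\begin{equation}
|D^2v|\le C\eps\, x_n^{\sigma-1}
\end{equation}
together with the corresponding H\"older bound. Feeding this into \eqref{coef1} and \eqref{coef2} and using Proposition \ref{hconstant}, we conclude that
\begin{equation}
|a^{ij}(x)-a^{ij}_0|+|b^i(x)-b^i_0|\le C(\eps+|x|^\sigma),
\end{equation}
with the correct H\"older modulus up to $\{x_n=0\}$. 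The normalization $b^n\equiv1$ can be arranged by dividing the equation through by $b^n$.

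Next we prove a variable-coefficient Schauder estimate for \eqref{nondivL}, treating the equation as a perturbation of $L_0$ with the $b^i(x)/x_n$ structure. We rerun the blow-up and compactness argument of Theorem \ref{constantSchauder} with $\beta<\sigma$. In each blow-up the coefficients freeze at the limit point.
\begin{itemize}
\item In Case (a), the limit is a constant-coefficient uniformly elliptic equation.
\item In Case (b), the limit is an $L_0$ with frozen coefficients, and Theorem \ref{Liouville} applies after the even extension of Theorem \ref{evenextension}.
\end{itemize}
One point needs care. The error term $(b^i(x)-b^i(x_k))\D_iu/x_n$ is controlled only if $\D_iu$ vanishes to the right order at the boundary. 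This is exactly the conormal condition of Lemma \ref{Conormal}, which must be re-derived for variable $b$; the derivation is the same multiply-by-$|s|$ computation. The estimate then closes by the iteration Lemma \ref{iterationlemma}. If the a priori regularity needed to run this blow-up is missing, we instead use an approximation scheme: compare $w$ on dyadic balls with solutions of the frozen equation, in Campanato style, which requires only $L^\infty$ closeness from the maximum principle of Lemma \ref{maxprin}.

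This gives $w=v_e\in C^{2,\beta}$, so $D_{x'}D^2v$ is H\"older. It remains to control $v_{nn}$. From \eqref{vpde}, ellipticity in the $G^{nn}$ entry lets us solve for
\begin{equation}
-\frac{h'}{v_n^3}v_{nn}+\frac{h''}{v_n^2}
\end{equation}
as a smooth function of the already-controlled quantities. Using $h''x_n/h'\to\frac{1+\gamma}{1-\gamma}$, this becomes an ODE in $x_n$ for $v_n$ of the form $(x_n^\alpha\,\cdot)'$. Integrating this ODE with the Neumann-type condition $v_{nn}\to0$ (the analogue of $C_1=0$ in Theorem \ref{Liouville}) yields $v_{nn}\in C^{\beta}$ up to the boundary. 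Hence $v\in C^{2,\beta}(B^+_{1/2})$.
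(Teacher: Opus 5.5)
Your route differs from the paper's. The paper proves this theorem by a direct compactness argument on $\phi=v-x_n$: approximation by quadratics at scale $r$, blow-up to a solution of the frozen linearized operator, even reflection, and a contradiction via Theorem \ref{constantSchauder}. Only afterwards does it use the result to make the coefficients of \eqref{wpde1} H\"older. You instead make the coefficients H\"older first, run a variable-coefficient Schauder estimate for $v_e$, and then recover $v_{nn}$ from the equation. \textbf{The last step is where your argument fails.} Solving \eqref{vpde} for the $nn$-entry gives $G^{nn}=\Theta(G^{in},G^{ij},h^\gamma)$. Since $h'G^{nn}=\partial_n\big((h')^2/(2v_n^2)\big)$, this integrates to $(h')^2v_n^{-2}=2\int_0^{x_n}h'\,\Theta\,ds$. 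Boundedness of $v_{nn}$ therefore essentially requires $\Theta/h''=\theta_0(x')+O(x_n)$, and nothing you have established gives that. Proposition \ref{hconstant} is a limit with no rate, which also leaves your claimed H\"older modulus for the coefficients unjustified. The ``Neumann condition $v_{nn}\to0$'' is not an analogue of $C_1=0$ in Theorem \ref{Liouville}. That condition only excludes the homogeneous solution $|x_n|^{1-\alpha}$ of the frozen linear ODE; it says nothing about the forcing created by the nonlinearity of $F$ along the profile.

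In fact this step cannot be repaired, because the statement itself fails. For a unit vector $e$, let $g_e$ be the inverse of $s\mapsto F(s\,e\otimes e)$, let $h_e$ solve $h_e''=g_e(h_e^\gamma)$ with $h_e(0)=h_e'(0)=0$, and set $\Phi_e(\eta)=\int_0^\eta g_e(\tau^\gamma)\,d\tau$, so that $\tfrac12(h_e')^2=\Phi_e(h_e)$. For $|\nu-e_n|\sim\eps$, the function $u=h_\nu((\nu\cdot x)_+)$ is an exact $C\eps$-flat solution with planar free boundary. Its hodograph transform is $v=(H(x_n)-\nu'\cdot x')/\nu_n$ with $H=h_\nu^{-1}\circ h$, so $H'(x_n)=\sqrt{\Phi_{e_n}(\eta)/\Phi_\nu(\eta)}$ at $\eta=h(x_n)$. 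Take $\gamma\in(0,1)$ and write $m_e=DF(0):(e\otimes e)$ and $k_e=D^2F(0)[e\otimes e,e\otimes e]$. Expanding $g_e(S)=S/m_e-k_eS^2/(2m_e^3)+O(S^3)$ gives $H'(x_n)=\kappa\big(1+c\,h(x_n)^\gamma+O(h(x_n)^{2\gamma})\big)$, where $\kappa=\sqrt{m_\nu/m_{e_n}}$ and $c=\frac{1+\gamma}{4(1+2\gamma)}\big(k_\nu/m_\nu^2-k_{e_n}/m_{e_n}^2\big)$. Since $\Phi_e(\eta)/\eta^{1+\gamma}$ is a smooth function of $\eta^\gamma$, the expansion may be differentiated, giving $v_{nn}=c'\,x_n^{(3\gamma-1)/(1-\gamma)}(1+o(1))$ with $c'$ a nonzero multiple of $c$. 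For $\gamma\in(0,1/3)$ this is unbounded as soon as $c\neq0$. An example is $n=2$, $F(M)=\mathrm{tr}\,M+\mu\big(\sqrt{1+|M|^2+M_{11}^2}-1\big)$ with $\mu>0$ small, for which $m_e\equiv1$ and $k_e=\mu(1+e_1^4)$, and any $\nu$ with $\nu_1\neq0$.

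So for $\gamma\in(0,1/3)$ the statement as written fails for generic $F$: this $v$ is only $C^{1,2\gamma/(1-\gamma)}$ up to $\{x_n=0\}$. Neither your argument nor the paper's compactness scheme can establish it. The part of your plan compatible with this example is the tangential one. That means H\"older continuity of $Dv$ and $x_nD^2v$ up to the boundary, measured against exact tilted profiles since linear functions do not solve \eqref{vpde}, followed by Schauder bounds for $v_e$. That is what H\"older coefficients in \eqref{wpde1} actually require. The $v_{nn}$ step should be dropped, not repaired.
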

\begin{proof}
Suppose for contradiction that there exists a sequence $\eps_k \rightarrow 0$ such that the conclusion of the theorem doesn't hold.

Let $v_0 = x_n$, $\phi = v-v_0$, then $\phi$ satisfies
\begin{equation}
\tilde{F}(D^2\phi,D\phi) :=F(G(D^2v_0+\D^2\phi,Dv + D\phi))  - F(G(D^2v_0,Dv_0)) = 0
\end{equation}
Define
\begin{equation}
P(a,b,c_i,d_i,f_{ij},x) = a x_n + b x_n^2 + \sum_{i = 1}^{n-1}c_ix_ix_n + \sum_{i = 1}^{n-1} d_i x_i^2 + \sum_{i,j = 1}^{n-1} f_{ij} x_ix_j.
\end{equation}
We sometimes write $P(x) = P(a,b,c,d,x)$ as $a,b,c,d$ are constants which could possibly change in different lines. The above theorem easily follows from the following Claim. 

\textit{Claim.}
There exist small constants $\eta$ such that if 
\begin{equation} \label{phiellBr}
\|\phi - P(x)\|_{L^\infty\left(\overline{B_r^+}\right)} \leq r^{2+\beta},
\end{equation}
then 
\begin{equation}
\|\phi -  P(x)\|_{L^\infty(\overline{B_{\eta r}^+})}  \leq {(\eta r)}^{2+\beta}.
\end{equation}

For $x \in \overline{B_r^+}$, let $w: \overline{B_r^+} \rightarrow [-1,1]$ be such that
\begin{equation}
\phi(x) = P(x) + r^{2+\beta}w(x/r).
\end{equation}
We wish to show that 
\begin{equation}
\|w - P(x)\|_{L^\infty(\overline{B_1^+})} \leq \eta^{2+\beta}.
\end{equation}
We observe that $w$ satisfies 
\begin{equation} \label{prweqn}
\tilde{F}(D^2P + r^{\beta} D^2 w,DP + r^{1+\beta}Dw) = 0.
\end{equation}
Define 
\begin{equation}
\mathcal{F}_r(M,p):= r^{-\beta} [\tilde{F}(D^2P + r^{\beta} D^2 w,DP + r^{1+\beta}Dw) - \tilde{F}(D^2P,DP)]
\end{equation}

We note that by Corollary \ref{harnackcor} applied to \eqref{prweqn}, $w$ is H\"older in compact subsets of $B_1^+$. Now we proceed with a compactness argument. Suppose by contradiction that the Lemma is false. Then there exists a sequence of $r_k \rightarrow 0$ and corresponding $\mathcal{F}_{r_k},\tilde{F}_k,w_k,a_k,b_k,c_{i,k},d_{i,k},f_{ij,k}$ for which the conclusion doesn't hold. We can extract a convergent subsequence that converges to a H\"older continuous function in $B_{1/2}$, and 
\begin{align}
&\tilde{F}_{k} \rightarrow \tilde{F}_{*},\ D\tilde{F}_{k} \rightarrow D\tilde{F}_{*} \text{ uniformly, }\\
&w_k \rightarrow w_*, \text{ uniformly in compact subsets of $B_{1/2}^+$, }\\
&a_k \rightarrow a_*,\ b_k \rightarrow b_*,\ c_{i,k} \rightarrow c_{i,*}, \ d_{i,k} \rightarrow d_{i,*}, \ f_{ij,k} \rightarrow f_{ij,*}
\end{align}
and $P_* = a_*x_n + b_* x_n^2 + \sum_{i = 1}^{n-1}c_{i,*}x_ix_n + \sum_{i = 1}^{n-1}d_{i,*}x_i^2 + \sum_{i,j = 1}^{n-1}f_{ij,*}x_ix_j$.

Extend $w_*$ evenly across $x = 0$ and denote the extended function still by $w_*$. We claim that $w_*$ is a solution to 
\begin{equation} \label{limitpde}
L_*w_*:= D\tilde{F}(D^2P_*,DP_*):[D^2 w_*, D w_* ]= 0 \text{ in } B_1.
\end{equation}
We first show that $w_*$ is a viscosity solution of \eqref{limitpde} away from $\{x_n = 0\}$. 

Assume by contradiction that we touch $w_*$ from above at $x_*$ by a smooth function $\phi + \eps|x-x_*|^2$, and 
\begin{equation}
D\tilde{F}(D^2P_*,DP_*):[D^2 \phi(x_*),D \phi(x_*)] > \eps > 0.
\end{equation}
Then $\phi + const.$ touches $w_k$ from above at $x_k$ and $x_k \rightarrow x_*$. We have
\begin{align}
0 &\geq r^{-\beta} [\tilde{F}(D^2P_* + r^{\beta} D^2 \phi,DP_* + r^{1+\beta}D\phi) - \tilde{F}(D^2P,DP)]\\
& \geq D\tilde{F}(D^2P_*,DP_*):[D^2 \phi(x_k),D \phi(x_k)] - C \max\{r^{\beta}\} > \eps/2 \text{ as } k \rightarrow \infty,
\end{align}
which is a contradiction. Here we used the following linearization
\begin{align} \nonumber
&\tilde{F}_k(D^2P_k + r_k^{\beta} D^2 \phi,DP_k+ r_k^{1+\beta}D\phi) = \tilde{F}_k(D^2P_k,DP_k) + D\tilde{F}_k(D^2P_k,DP_k):[r_k^{\beta} D^2 \phi,r_k^{1+\beta}D\phi] \\& \hspace{2.5 in} + o(r_k^{\beta} D^2 \phi, r_k^{\beta}D\phi).
\end{align}
We point out here that the first order coefficient in $D\tilde{F}_k(D^2P_k,DP_k)$ carries a factor of $1/r_k x_n$ which cancels with the extra factor of $r$ in the gradient term $r_k^{1+\beta}D\phi$.

Since $v, v_0, P$ are smooth away from $\{x_n = 0\}$, $w$ is a classical solution of \eqref{limitpde} away from $\{x_n = 0\}$ and thus $w$ is a viscosity solution by Theorem \ref{evenextension}. Therefore, Theorem \ref{constantSchauder} applies, and we have a contradiction.
\end{proof}

\begin{thm}\label{Schauder}
Let $w$ be a viscosity solution to $|x_n|^\alpha Lu = f$ in $B_\rho^+(0)$, then for each $\theta \in (0,1)$, we have
\begin{equation}
[w]_{C^{2,\beta}(B_{\theta\rho}^+(0))} \leq C([f]_{C^{0,\beta}(B_\rho^+(0))}) + \rho^{-2-\beta}\|w\|_{L^\infty(B_\rho^+(0))}).
\end{equation} 
\end{thm}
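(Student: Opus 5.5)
The plan is to treat Theorem \ref{Schauder} as the variable coefficient version of Theorem \ref{constantSchauder}, obtained by freezing coefficients in the manner of \cite{Simon}. Here $L$ is the operator \eqref{nondivL}, with coefficients $a^{ij}(x)$ and $b^i(x)$ coming from \eqref{coef1}, \eqref{coef2}. By Theorem \ref{vc2alpha} these coefficients are $C^{0,\beta}$ up to $\{x_n=0\}$, and at points of $\{x_n = 0\}$ they agree with the structure of the frozen operator $L_0$: $b^n=1$ after normalization, and the Neumann-type compatibility of Lemma \ref{Conormal} holds. First extend $w$ evenly across $\{x_n=0\}$ (Theorem \ref{evenextension}), so the problem is posed on the full ball $B_\rho$. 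Then it suffices to prove the a priori estimate
\begin{equation}
[D^2 w]_{C^{0,\beta}(B_{\rho/2})} \leq \delta [D^2 w]_{C^{0,\beta}(B_\rho)} + C\big([f]_{C^{0,\beta}(B_\rho)} + \rho^{-2-\beta}\|w\|_{L^\infty(B_\rho)}\big)
\end{equation}
for $\rho$ small and arbitrary $\delta>0$. Lemma \ref{iterationlemma}, applied with $S(B) = [D^2w]_{C^{0,\beta}(B)}$ and $k = 2+\beta$, together with a covering argument, then gives the stated estimate for every $\theta$.

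To get the a priori estimate, fix $y\in B_{\rho/2}$ and freeze the coefficients at $y_0 = (y',0)$, writing $L = L_{y_0} + (L - L_{y_0})$, where $L_{y_0}$ is of the form \eqref{frozen}. Then $|x_n|^\alpha L_{y_0} w = f - |x_n|^\alpha (L-L_{y_0})w =: f + g$. Apply Theorem \ref{constantSchauder} on $B_\sigma(y_0)$, with $\sigma$ comparable to the scale. The second-order part of $g$ is handled as usual: its $C^{0,\beta}$ seminorm is bounded by $C\sigma^\beta[D^2 w]_{C^{0,\beta}} + C\|D^2 w\|_{L^\infty}$ times the H\"older seminorm of $a^{ij}$. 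Points far from the boundary, with $y_n \gg \sigma$, need no special treatment, since there the equation is uniformly elliptic with bounded drift after scaling (Case (a) of the proof of Theorem \ref{constantSchauder}).

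The main obstacle is the first-order term $|x_n|^{\alpha-1}(b^i(x)-b^i(y_0))w_i$. The weight $|x_n|^{-1}$ is not absorbed by the H\"older continuity of $b$ alone. I would write $b^i(x) - b^i(y_0) = (b^i(x) - b^i(x',0)) + (b^i(x',0) - b^i(y_0))$ and treat the two pieces separately.

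For the normal difference $b^i(x)-b^i(x',0)$, use that the coefficients in \eqref{coef2} depend on $v_{nn}, v_{kn}$ and on $h''/h'$. Since $v\in C^{2,\beta}$ by Theorem \ref{vc2alpha} and Proposition \ref{hconstant} controls $h''/h'$, this gives $|b(x) - b(x',0)|\lesssim |x_n|^\beta$, so the weight $|x_n|^{-1}$ improves to $|x_n|^{\beta - 1}$.

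For the tangential difference, subtract the first-order Taylor polynomial of $w$ at $(x',0)$ and use the conormal condition of Lemma \ref{Conormal}, $b^i(x',0)w_i(x',0)=0$. This gives $|b^i w_i|\lesssim |x_n|^{1+\beta}$-type bounds for the relevant combination near the boundary, which compensates the singular weight.

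If these bounds do not directly give a $C^{0,\beta}$ right-hand side, I would instead run the blow-up contradiction argument of Theorem \ref{constantSchauder} directly for variable coefficients. After rescaling at $\rho_k\to 0$, the coefficients converge to constants, and the same two cases (a) and (b) reduce the problem to the classical Liouville theorem and to Theorem \ref{Liouville}, respectively.
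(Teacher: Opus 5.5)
\textbf{Verdict.} Your scheme (even reflection, freezing, Theorem \ref{constantSchauder} on small balls, Lemma \ref{iterationlemma}) is the paper's. The paper's proof disposes of the error $R$ in one line, by appeal to continuity of the coefficients. You are right that the drift part of $R$ is where a real argument is needed, but the argument you give does not close the gap.

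\textbf{The normal piece.} Write the equation without the weight, as $L_{y_0}w=f+g$; this is the form in which the blow-up proof behind Theorem \ref{constantSchauder} actually runs. The drift error is then $\frac{\alpha}{x_n}(b(x)-b(y_0))\cdot Dw$. The bound $|b(x)-b(x',0)|\lesssim|x_n|^\beta$ only turns $|x_n|^{-1}$ into $|x_n|^{\beta-1}$ in front of the tangential derivatives $w_k$, $k<n$. These do not vanish on $\{x_n=0\}$, so the bound does not even make $g$ bounded. This is not a technicality: in $\RR^2$,
\begin{equation}
w=x_1-\frac{\alpha}{(\alpha+\beta)(1+\beta)}\,x_2^{1+\beta}\quad\text{solves}\quad\Delta w+\frac{\alpha}{x_2}\big(\partial_2 w+x_2^{\beta}\,\partial_1 w\big)=0\ \text{ in }\{x_2>0\}.
\end{equation}
It is bounded, satisfies $x_2^\alpha\partial_2w\to0$, and its even reflection solves the reflected equation. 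The drift coefficient $(x_2^\beta,1)$ is $C^{0,\beta}$ and equals the model value $e_2$ on $\{x_2=0\}$. Yet $\partial_{22}w=-\frac{\alpha\beta}{\alpha+\beta}x_2^{\beta-1}$ is unbounded. So a H\"older bound on $b$ cannot suffice. Moreover, Proposition \ref{hconstant} gives no rate for $x_nh''/h'$, so even that bound is not justified. You would need structure such as $b(x)-b(x',0)=x_n\,c(x)$ with $c\in C^{0,\beta}$, extracted from the explicit formulas \eqref{coef1}--\eqref{coef2} rather than from Theorem \ref{vc2alpha} alone.

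\textbf{The tangential piece.} This fails for a different reason. The argument of Lemma \ref{Conormal} gives $b(x',0)\cdot Dw(x',0)=0$, with the coefficient evaluated at $(x',0)$, not at $y_0$. Hence, as $x_n\to0$, the term $\frac{\alpha}{x_n}(b(x',0)-b(y_0))\cdot Dw$ behaves like $-\frac{\alpha}{x_n}\,b(y_0)\cdot Dw(x',0)$. The quantity $b(y_0)\cdot Dw(x',0)$ has no reason to vanish for $x'\neq y'$ once $b(\cdot,0)$ is not constant, so $g$ is unbounded on $B_\sigma(y_0)$. Subtracting a Taylor polynomial of $w$ at one point cannot help, because the obstruction involves $Dw$ along all of $B_\sigma(y_0)\cap\{x_n=0\}$. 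Freezing the drift at a point is therefore incompatible with this boundary condition. You need some device that keeps the variable boundary drift in the model (e.g.\ a change of variables straightening the conormal field, as for oblique derivative problems), and the proposal has none.

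\textbf{The fallback blow-up.} This meets the same obstructions; by the example above, no compactness argument can succeed under H\"older hypotheses alone. After rescaling at $(x_k',0)$, the term
\begin{equation}
\frac{\alpha}{x_n}\,\frac{\big(b((x_k',0)+\rho_kx)-b(x_k',0)\big)\cdot Du_k(x_k',0)}{\rho_k^{1+\beta}\,[D^2u_k]_{C^{0,\beta}(B_1)}}
\end{equation}
carries an uncompensated factor $\rho_k^{-1}$. You also cannot normalize $Du_k(x_k',0)$ away by subtracting a linear function, since for variable $b$ linear functions are no longer solutions.
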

\begin{proof}
By Theorem \ref{evenextension} we extend $w$ by even reflection, and extend the operator $L$ correspondingly, then $w$ is a viscosity solution in $B_\rho(0)$. We note that for each $y \in B_{\rho}(0)$ and each $\sigma \in (0,\rho)$ such that $B_\sigma \subset B_{\rho}(0)$, we can write 
\begin{equation}
|x_n|^\alpha L = \mathcal{L}_0 - R
\end{equation}
where $\mathcal{L}_0$ is the frozen coefficient operator \eqref{frozen}, and 
\begin{equation} \label{diff}
\mathcal{L}_0w = Rw + f
\end{equation}
and 
\begin{align}
|{R}w| \leq |x_n^\alpha(a^{ij}(x) - a^{ij}_0)w_{ij}| &+ |x_n^\alpha(a^{nn}(x) - a^{nn}_0)w_{nn}| + 2|x_n^\alpha(a^{in}(x) - a^{in}_0)w_{in}| \\&+ \left|\alpha x_n^{\alpha-1}(b^i(x) - b^{i}_0)w_i\right| + \left|\alpha x_n^{\alpha-1}(b^n(x) - b^{n}_0)w_n\right|.
\end{align}
where $a^{ij}$, $a^{in}$, $a^{nn}$, $b^i$, $b^n$ is given explicitly in \eqref{coef1} and \eqref{coef2} and are continuous by Theorem \ref{vc2alpha}. Therefore,
\begin{equation}
[{R}w]_{C^{0,\beta}(B_{\theta\sigma}(0))} \leq \frac{\delta}{2}[w]_{C^{2,\beta}(B_{\theta\sigma}(0))}
\end{equation}

It follows from Theorem \ref{constantSchauder} that
\begin{equation}
[w]_{C^{2,\beta}(B_{\sigma/2}(0))} \leq \delta [w]_{C^{2,\beta}(B_{\sigma})} + C([f]_{C^{0,\beta}(B_\sigma)} + \sigma^{-2-\beta}\|w\|_{L^\infty(B_\sigma)}).
\end{equation}
Now Lemma \ref{iterationlemma} finishes the proof of the theorem.
\end{proof}

\begin{proof}[Proof of Theorem \ref{ThmA}]
Let $v$ be the hodograph transform of $u$ defined by \eqref{hodo}. It follows from the explicit expression of \eqref{wpde1} and Theorem \ref{vc2alpha} that the coefficient of \eqref{wpde1} are $C^{0,\sigma}$. Applying Theorem \ref{Schauder} to \eqref{wpde1}, we have $ w = v_e \in C^{2,\beta}(\overline{B^+_{3/4}(0)})$, and applying this to every tangential directions, we have
\begin{equation}\label{vc3}
v \in C_{x'}^{3,\beta}C^{2,\beta}_{x_n}(\overline{B^+_{3/4}(0)}).
\end{equation}
Now we can differentiate the equation \eqref{wpde1} in tangential direction again and get an equation for $w_{e_1}$ and $w_{e_1}$ satisfies a PDE of the same form as \eqref{wpde1}, for $1\leq i,j \leq n$, 
\begin{equation}\label{2dervpde}
x_n^\alpha a^{ij}(x)w_{e_1ij} + x_n^{\alpha-1} \alpha b^i(x) w_{e_1i} = -x_n^\alpha a^{ij}_{e_1}(x)w_{ij}- \alpha x_n^{\alpha -1}b^i_{e_1}(x) w_{i}:= f.
\end{equation}

We first observe that by \eqref{vc3} and the explicit formula of the coefficient \eqref{coef1}, \eqref{coef2}, $a^{ij}_{e_1}, b^i_{e_1}$ are H\"older continuous. Therefore, we can apply Theorem \ref{Schauder} again and get $w_{e_1} \in C_{x'}^{2,\beta}(\overline{B^+_{5/8}})$ which implies 
\begin{equation}
v \in C_{x'}^{4,\beta}C^{2,\beta}_{x_n}(\overline{B^+_{5/8}(0)}).
\end{equation}
Now it is clear that we can iterate the process and get after $k$ iterations,
\begin{equation}
v \in C_{x'}^{2+k,\beta}C^{2,\beta}_{x_n}(\overline{B^+_{2^{-1} (1+ 2^{-k})}(0)}),
\end{equation}
which yields
\begin{equation}
v \in C_{x'}^{\infty}C^{2,\beta}_{x_n}(\overline{B^+_{1/2}}).
\end{equation}
Finally the Theorem follows by noticing that the free boundary $\D \{u > 0\}$ is the graph of $v(x',0)$.

\end{proof}

\section*{Acknowledgement}
The author gratefully acknowledges his PhD advisor, Dennis Kriventsov, for many enlightening conversations regarding this work. This work is partially supported by NSF Division Of Mathematical Sciences grant DMS-2247096.\\

\nocite{Caffarelli1989, HanLin, MR1658612, MR1986693, MR643158, MR4093736, MR4821888, MR3310271, MR4308249, MR1118699, MR973745, MR4309882}

\bibliographystyle{plain}
\bibliography{bib.bib}
\end{document}